\numberwithin{equation}{section}
\newtheorem{theo}{Theorem}
\newtheorem{conj}{Conjecture}
\newtheorem{coro}{Corollary}
\newtheorem{prop}{Proposition}
\newtheorem{lem}{Lemma}
\theoremstyle{remark}
\newtheorem{remark}{Remark}
\newtheorem*{Remark}{Remark}
\newtheorem*{Remarks}{Remarks}
\def\al{\alpha}
\def\be{\beta}
\def\ga{\gamma}
\def\ep{\varepsilon}
\def\la{\lambda}
\def\ph{\varphi}
\def\Ga{\Gamma}
\def\De{\Delta}
\def\Th{\Theta}
\def\({\left(}
\def\){\right)}
\def\[{\left[}
\def\]{\right]}
\def\fl#1{\left\lfloor#1\right\rfloor}
\def\lcm{\operatorname{lcm}}
\def\dd{\textup{d}}
\begin{document}

\title[]{On the integrality of the Taylor coefficients of mirror maps}

\author[]{C. Krattenthaler$^\dagger$ and T. Rivoal}
\date{\today}

\address{C. Krattenthaler, Fakult\"at f\"ur Mathematik, Universit\"at Wien,
Nordbergstra{\ss}e~15, A-1090 Vienna, Austria.
WWW: \tt http://www.mat.univie.ac.at/\~{}kratt.}

\address{T. Rivoal,
Institut Fourier,
CNRS UMR 5582, Universit{\'e} Grenoble 1,
100 rue des Maths, BP~74,
38402 Saint-Martin d'H{\`e}res cedex,
France.\newline
WWW: \tt http://www-fourier.ujf-grenoble.fr/\~{}rivoal.}

\thanks{$^\dagger$Research partially supported 
by the Austrian Science Foundation FWF, grants Z130-N13 and S9607-N13,
the latter in the framework of the National Research Network
``Analytic Combinatorics and Probabilistic Number Theory''}

\subjclass[2000]{Primary 11S80;
Secondary 11J99 14J32 33C20}

\keywords{Calabi--Yau manifolds, integrality of mirror maps,
$p$-adic analysis, Dwork's theory,  
harmonic numbers, hypergeometric differential equations}

\begin{abstract}
We show that the Taylor coefficients of the series
${\bf q}(z)=z\exp({\bf G}(z)/{\bf F}(z))$ are integers, where ${\bf F}(z)$ and 
${\bf G}(z)+\log(z) {\bf F}(z)$ 
are specific solutions of certain hypergeometric differential
equations with maximal unipotent monodromy at $z=0$. 
We also address the question of finding the largest integer $u$
such that the Taylor coefficients of $(z ^{-1}{\bf q}(z))^{1/u}$ are still
integers.
As consequences, we are able to prove numerous integrality
results for the Taylor coefficients of mirror maps 
of Calabi--Yau complete intersections in weighted projective spaces, 
which improve and refine previous results by Lian and Yau, and by Zudilin.
In particular, we prove the general ``integrality'' conjecture of 
Zudilin about these mirror maps.
\end{abstract}

\maketitle

\section{Introduction and statement of results}

\subsection{Mirror maps}
{\em Mirror maps} have appeared quite recently in mathematics and
phys\-ics. Indeed, 
the term ``mirror map'' was coined in the late  
1980s
by physicists whose research in string theory led them to discover
deep facts in algebraic geometry 
(e.g., given a Calabi--Yau threefold $M$, 
they constructed another Calabi--Yau threefold, the ``mirror'' of $M$, 
whose properties can be used to enumerate the rational curves on $M$).

The purpose of the present article is to prove rather sharp
integrality assertions for the Taylor coefficients of 
mirror maps coming from certain
hypergeometric differential equations, which are 
Picard--Fuchs equations of suitable one parameter families 
of Calabi--Yau complete intersections in weighted projective spaces. 
The corresponding results (see 
Theorems~\ref{thm:4} and \ref{thm:2}) encompass 
integrality results on these mirror maps which exist in the
literature, improving and refining them in numerous instances.

In a sense, mirror maps can be viewed as higher 
order generalisations of 
certain classical modular forms (defined over various 
congruence sub-groups of $SL_2(\mathbb{Z})$), the latter appearing naturally 
at low order in Schwarz's theory of 
hypergeometric functions (see~\cite{yoshida}). 
For integers $k\ge 1$ and $N\ge 1$, let us define the power 
series
\begin{equation*}
F(z):=\sum_{m=0}^{\infty} \frac{(Nm)!^k}{m!^{kN}} \,z^m,
\end{equation*}
which converges for $\vert z\vert <1/N^{kN}.$ 
The function $F(z)$ is solution of a hypergeometric differential equation
of degree $kN$, 
which is a special case of~\eqref{eq:equadiff} below. 
The equation has maximal unipotent monodromy 
(MUM) 
(in particular, the roots of the indicial equation at $z=0$ are all $0$).
A basis of solutions with at most logarithmic
singularities around $z=0$ 
can then be obtained by Frobenius' method; see~\cite{yoshida}. In
particular, there exists another solution of the form
$G(z)+\log(z)F(z)$, 
where $G(z)$ is holomorphic around~$0$,
\begin{equation*}
G(z):=\sum_{m=1}^{\infty} \frac{(Nm)!^k}{m!^{kN}}
kN(H_{Nm}-H_m)\,z^m,
\end{equation*}
with 
$H_n:=\sum _{i=1} ^{n}\frac {1} {i}$ denoting the $n$-th harmonic number.
In the context of mirror symmetry, the function $q(z):=z\exp(G(z)/F(z))$
is usually called  
{\em canonical coordinate}, and its compositional
inverse $z(q)$ is  
the prototype of a {\it mirror map}.

In the case $k=N=2$, one can express $z(q)$ explicitly
in terms of the Legendre function
$\lambda(q):=16q\prod_{n=1}^{\infty}\big((1+q^{2n})/(1+q^{2n-1})\big)$,
namely as $z(q)= \lambda(q)/16$, which is modular over $\Gamma(2)$. 
Moreover, if $k=3$ and $N=2$, we have
$z(q)=\lambda(q)(1-\lambda(q))/16$. 
In particular, in both cases the power series $q(z)$ and $z(q)$
have {\em integral Taylor coefficients}. 
It is  
this fact that we will generalise in this paper.
For other examples of mirror maps of modular origin, we refer to the  
discussion in~\cite[pp.~111--113]{andre} (and also for the importance
of such facts in Diophantine approximation) and \cite[Sec.~3]{lianyau}.

The most famous (apparently non-modular) example of
a mirror map arises in the case when $N=5$ and $k=1$, which was used  
in the epoch-making paper 
by the physicists Candelas et al.~\cite{candelas}  in their study 
of the family ${\mathbf M}$ of quintic hypersurfaces 
in $\mathbb{P}^4(\mathbb{C})$ defined by $\sum_{k=1}^5x_k^5-5z
\prod_{k=1}^5x_k=0$, $z$ being a complex parameter
(see~\cite{morrisonjams, pandha, VoisAA}). 

The following conjecture belongs probably to the folklore of mirror
symmetry theory.
\begin{conj}
\label{conj:zudilin1}
For any integers $k\ge 1$ and $N\ge 1$, we have 
$q(z)\in z\mathbb{Z}[[z]]$ and $z(q)\in q\mathbb{Z}[[q]]$.~{\em(}\footnote{%
In the number-theoretic study undertaken in the present paper,
we are interested in the integrality of the coefficients of (roots of) mirror
maps $z(q)$. In that context, the mirror map $z(q)$ and the
corresponding canonical coordinate $q(z)$
play strictly the same role, because $(z^{-1}q(z))^{1/\tau}\in 
1+z\mathbb{Z}[[z]]$ for some integer  
$\tau$ implies that $(q^{-1}z(q))^{1/\tau}\in 1+q\mathbb{Z}[[q]]$, and
conversely. (See~\cite[Introduction]{lianyau1}.)
We shall, in the sequel, formulate our integrality results exclusively for
canonical coordinates, assuming tacitly that the reader keeps in mind
that they automatically also
hold for the corresponding mirror maps.  
It is also therefore that, by abuse of terminology, we 
shall often use the term ``mirror map'' for any  
canonical coordinate.}{\em)}  

\end{conj}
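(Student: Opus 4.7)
The plan is to prove Zudilin's conjecture one prime at a time: for each prime~$p$, show that $q(z)/z = \exp(G(z)/F(z)) \in 1 + z\mathbb{Z}_p[[z]]$; intersecting over all primes then yields $q(z) \in z\mathbb{Z}[[z]]$, and the statement for $z(q)$ follows by Lagrange inversion (cf.\ the paper's footnote). The coefficients $(Nm)!^k/m!^{kN} = \binom{Nm}{m,\dots,m}^k$ of $F$ are integers, so $F(z)\in 1+z\mathbb{Z}[[z]]$ is a unit in $\mathbb{Z}_p[[z]]$ and $G(z)/F(z) \in z\mathbb{Q}_p[[z]]$; the exponential is thus a well-defined formal series. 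Applying the Dieudonn\'e--Dwork lemma to $f(z) := \exp(G(z)/F(z))$ reduces $f(z) \in 1+z\mathbb{Z}_p[[z]]$ to $f(z)^p/f(z^p) \in 1+pz\mathbb{Z}_p[[z]]$, and the standard bound $v_p(n!) \le (n-1)/(p-1)$ on the exponential power series makes it sufficient to establish
\begin{equation*}
p\,\frac{G(z)}{F(z)} - \frac{G(z^p)}{F(z^p)} \in p\,z\,\mathbb{Z}_p[[z]],
\end{equation*}
or equivalently, after multiplying through by the unit $F(z)F(z^p)\in 1+z\mathbb{Z}_p[[z]]$, the congruence $p\,F(z^p)\,G(z) - F(z)\,G(z^p) \in p\,z\,\mathbb{Z}_p[[z]]$.

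The next task is to establish this last congruence by means of two complementary Dwork-style comparison formulas. For $F$ alone, the classical Dwork congruence for hypergeometric series gives $F(z)\,F_{p^{s-1}-1}(z^p) \equiv F_{p^s-1}(z)\,F(z^p) \pmod{p^s}$ for all $s\ge 1$, where $F_M(z) := \sum_{m\le M}(Nm)!^k/m!^{kN}\,z^m$; the proof is an induction on~$s$ resting on a Lucas-type identity for the coefficients $(Nm)!^k/m!^{kN}$ modulo~$p$. The parallel congruence for $G$ I would obtain by regarding $G(z)+\log(z)F(z)$ as the derivative at $\varepsilon=0$ of the perturbed Frobenius family
\begin{equation*}
F_\varepsilon(z) := \sum_{m\ge 0} z^{m+\varepsilon}\,\frac{\Gamma(N(m+\varepsilon)+1)^k}{\Gamma(m+\varepsilon+1)^{kN}},
\end{equation*}
and differentiating the $F_\varepsilon$-analogue of the Dwork congruence with respect to~$\varepsilon$ at $\varepsilon=0$. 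The factor of~$p$ on the correct side of the target relation then emerges naturally from the chain rule $\partial_\varepsilon z^{p\varepsilon} = p\log(z)\,z^{p\varepsilon}$ applied to the substitution $z \mapsto z^p$.

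The main obstacle is the delicate $p$-adic analysis of the harmonic-number factors $kN(H_{Nm}-H_m)$ that appear in $G$: the coefficients of $G$ are \emph{not} $p$-adic integers, and the failure of integrality is concentrated in the terms $1/(jp)$ inside the harmonic sums. The key arithmetic input that underlies the whole proof is
\begin{equation*}
H_n - \tfrac{1}{p}\,H_{\lfloor n/p\rfloor} \in \mathbb{Z}_p,
\end{equation*}
which expresses that the only denominators of positive $p$-adic valuation in $H_n$ telescope into $(1/p)H_{\lfloor n/p\rfloor}$. Combined with the $F$-congruence, this identity collapses the defect in comparing $G(z)$ with $G(z^p)/p$ onto a quantity already controlled by Dwork's formula. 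Extra care will be required when $p \mid kN$ and at $p=2$ (where the exponential estimate is tight), but these are technical rather than conceptual matters; once the two congruences are in place, Dieudonn\'e--Dwork closes the argument.
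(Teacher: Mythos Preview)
Your overall framework coincides with the paper's: reduce prime by prime via the Dieudonn\'e--Dwork lemma to the congruence $F(z)G(z^p)-pF(z^p)G(z)\in pz\mathbb{Z}_p[[z]]$, and use the harmonic-number identity $pH_J\equiv H_{\lfloor J/p\rfloor}\pmod{p\mathbb{Z}_p}$ as the key arithmetic input. So the architecture is right.

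The genuine gap is in the sentence ``Combined with the $F$-congruence, this identity collapses the defect \dots onto a quantity already controlled by Dwork's formula.'' This is precisely the step that does \emph{not} follow from the $F$-congruence and the harmonic identity alone, and it is exactly where the paper's new work lies. After applying $pH_{La+Ljp}\equiv H_{\lfloor La/p\rfloor+Lj}$, one is left with coefficients of the shape $B(a+jp)B(K-j)\big(H_{L(K-j)}-H_{\lfloor La/p\rfloor+Lj}\big)$. To symmetrise these sums so that Dwork's formal congruence theorem can be applied, one must first show that the floor term can be replaced by $H_{Lj}$ at no $p$-adic cost, i.e.\ that $B(a+pj)\big(H_{Lj+\lfloor La/p\rfloor}-H_{Lj}\big)\in p\mathbb{Z}_p$. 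This is the content of the paper's Lemma~\ref{lem:12a} (and its analogue Lemma~\ref{lem:12}), whose proof requires a nontrivial analysis of the Landau step function $\Delta(x)=\sum\lfloor\alpha_i x\rfloor-\sum\lfloor\beta_i x\rfloor$. The paper explicitly flags this as the most difficult part and the reason earlier proofs were confined to $N$ a prime or prime power; your dismissal of $p\mid kN$ as ``technical rather than conceptual'' is therefore misplaced.

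Separately, the proposal to obtain the $G$-congruence by differentiating a Dwork congruence for the perturbed family $F_\varepsilon$ at $\varepsilon=0$ is a useful heuristic for guessing the shape of the answer, but it is not a proof: congruences modulo $p^s$ in $\mathbb{Z}_p[[z]]$ are not preserved under formal differentiation in an auxiliary parameter, and the gamma-function perturbation takes you outside $\mathbb{Z}_p$. The paper instead works directly with the coefficient sums $C(a+Kp)$, rearranges them via Dwork's combinatorial Lemma~4.2 into double sums $\sum_{s,m}Y_{m,s}$, and bounds each $Y_{m,s}$ by combining Dwork's Formal Congruences Theorem (applied with $A=g=B_{\mathbf N}$) with a second valuation estimate (Lemma~\ref{lem:strat4}). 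You would need to replace the differentiation idea with this kind of explicit coefficient analysis.
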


Lian and Yau \cite[Sec.~5, Theorem~5.5]{lianyau} proved this
conjecture for $k=1$ and any $N$ which is a prime number.
Zudilin~\cite[Theorem~3]{zud} extended their result by proving the
conjecture for any $k\ge 1$
and any $N$ which is a prime power.

Our original goal was to settle 
Conjecture~\ref{conj:zudilin1} in complete generality.
In the present paper, we shall accomplish much more.
We shall prove a more general conjecture by Zudilin \cite{zud} 
concerning the integrality of Taylor coefficients of a very large
class of mirror maps (see
Conjecture~\ref{conj:zudilin2}) and refinements of the corresponding
integrality results in special cases.
In the remainder of this introductory section, we describe these two
sets of results (see Theorems~\ref{thm:4} and \ref{thm:2}). 
Their proofs will then be
given in the subsequent sections.

\subsection{Zudilin's conjecture} \label{ssec:zudconj}

In order to state Zudilin's conjecture,
we need to introduce some notation.

For a positive integer $N$,
let $p_1, p_2, \dots, p_\ell$ denote its distinct prime factors.   
We define the vectors of integers
\begin{equation}
(\alpha_j)_{j=1, \ldots, \mu}  := \left( N, \frac{N}{p_{j_1}p_{j_2}},
\frac{N}{p_{j_1}p_{j_2}p_{j_3}p_{j_4}}, \ldots 
\right)_{1\le j_1<j_2<\dots \le \ell } \label{eq:aj}
\end{equation}
and
\begin{equation}
(\beta_j)_{j=1, \ldots, \eta} := \left(
\frac{N}{p_{j_1}}, \frac{N}{p_{j_1}p_{j_2}p_{j_3}}, \ldots,  1,1, \dots, 1 
\right)_{1\le j_1<j_2<\dots \le \ell }, \label{eq:bj}
\end{equation}
where $\al_1+\al_2+\dots + \al_\mu=\be_1+\be_2+\dots +\be_\eta.$ 
We put $\mathbf{B}_1(m):=1$ and $\mathbf{H}_1(m):=0$, and, if $N\ge2$,
\begin{equation}\label{eq:Bzudilin}
\mathbf{B}_N(m) := \frac{\prod_{j=1}^\mu (\alpha_j
m)!}{\prod_{j=1}^\eta (\beta_j m)!} 
\end{equation}
and~(\footnote{Zudilin's definition \cite[Eq.~(5)]{zud} of the quantity 
$\mathbf H_N(m)$ (he writes $D_N(m)$) is
different from \eqref{eq:rajout2}. We do not need it in our paper,
but, for the sake of completeness, we
prove the equivalence of the two definitions in Section~\ref{sec:H_N}.})
\begin{equation}\label{eq:rajout2}
\mathbf{H}_N(m)
= \sum_{j=1}^{\mu} \alpha_j H_{\alpha_jm} -  \sum_{j=1}^{\eta} \beta_j
H_{\beta_jm}.
\end{equation}
For example, we have 
$$
\mathbf{B}_4(m) = \frac{(4m)!}{(2m)!\,m!^2}, \; \mathbf{B}_6(m) =
\frac{(6m)!}{(3m)!\,(2m)!\,m!},  
\; \mathbf{B}_{30}(m) =
\frac{(30m)!\,(5m)!\,(3m)!\,(2m)!}{(15m)!\,(10m)!\,(6m)!\,m!^9},
$$
and, correspondingly,
\begin{multline*}
\mathbf{H}_4(m) = 4H_{4m}-2H_{2m}-2H_m, \; \mathbf{H}_6(m) =
6H_{6m}-3H_{3m}-2H_{2m}-H_m,  
\\ 
\mathbf{H}_{30}(m) =
30H_{30m}+5H_{5m}+3H_{3m}+2H_{2m}-15H_{15m}-10H_{10m}-6H_{6m}-9H_m.
\end{multline*}
Given a vector $\mathbf N=(N_1,N_2,\dots,N_k)$ of positive integers,
we construct the power series
\begin{equation*} 
\mathbf{F}_{\mathbf{N}}(z) := \sum_{m=0}^{\infty} 
\bigg(
\prod_{j=1}^k
\mathbf{B}_{N_j}(m)
\bigg)
z^m  
\end{equation*}
and
$$
\mathbf{G}_{\mathbf{N}}(z) := \sum_{m=1}^{\infty} \bigg(\sum_{j=1}^k
\mathbf{H}_{N_j}(m) \bigg)\bigg( \prod_{j=1}^k \mathbf{B}_{N_j}(m)\bigg)
z^m.  
$$
It can be shown (see \cite{zud}, taking into account
Lemma~\ref{lem:rajout2} in Section~\ref{sec:H_N}) that
the series $\mathbf{F}_{\mathbf{N}}(z)$ and 
$\mathbf{G}_{\mathbf{N}}(z)+
\log (z)\,\mathbf{F}_{\mathbf{N}}(z)$ are two 
solutions to the equation $\mathbf Ly=0$,
where the hypergeometric differential operator ${\bf L}$ is defined by 
\begin{equation}\label{eq:equadiff}
{\bf L}:=\left(z\frac{\dd}{\dd z}\right)^{\varphi(N_1)+\cdots +\varphi(N_k)} 
- C_{\bf N} z
\prod_{j=1}^k\prod_{i=1}^{\varphi(N_j)} 
\left(z\frac{\dd}{\dd z} + \frac{r_{i,j}}{N_j}\right),
\end{equation}
where $\varphi(\,.\,)$ is Euler's totient function, 
$C_{\bf N}:=\prod _{j=1} ^{k} C_{N_j}$ with $C_{N_j}:={N_j}^{\varphi(N_j)}
{\prod_{p\mid N_j}} p^{\varphi(N_j)/(p-1)}$, and 
the $r_{i,j}\in \{1, 2, \ldots, N_j\}$ form the residue 
classes modulo $N_j$ which are coprime to $N_j$. 
The differential equation ${\bf L}y=0$ has 
MUM~(\footnote{%
This equation is the Picard--Fuchs equation 
of the mirror Calabi--Yau family of 
a one parameter family of Calabi--Yau complete intersections in a 
weighted projective space. See~\cite{corti}
and~\cite[Sec.~3]{hosono}.}) at the origin. 

We can now state Zudilin's conjecture 
from \cite[p.~605]{zud}.
(Zudilin's formulation is different. That our formulation is equivalent
with Zudilin's 
follows from \cite[Lemma~4]{zud}, respectively \eqref{eq:rajout3},
and from Lemma~\ref{lem:rajout2} in Section~\ref{sec:H_N}.)

\begin{conj}[\sc Zudilin]\label{conj:zudilin2}
For any positive integers $N_1, N_2, \dots, N_k$, we have 
$
{\bf q}_{\mathbf{N}} (z):= z 
\exp(\mathbf{G}_{\mathbf{N}}(z)/\mathbf{F}_{\mathbf{N}}(z)) \in
z\mathbb{Z}[[z]]. 
$
\end{conj}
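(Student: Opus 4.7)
The plan is to prove the claim one prime at a time: for every prime $p$ I aim to show that $\mathbf{q}_{\mathbf{N}}(z)\in z\mathbb{Z}_p[[z]]$, which together with the obvious rationality of its coefficients (immediate from $\mathbf{F}_{\mathbf{N}}(0)=1$) yields $\mathbf{q}_{\mathbf{N}}(z)\in z\mathbb{Z}[[z]]$ via the identity $\mathbb{Z}=\mathbb{Q}\cap\bigcap_p\mathbb{Z}_p$.

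The $p$-adic step rests on the Dieudonn\'e--Dwork criterion in ``exponential'' form: for $g(z)\in z\mathbb{Q}_p[[z]]$, one has $\exp(g(z))\in 1+z\mathbb{Z}_p[[z]]$ if and only if $p\,g(z)-g(z^p)\in pz\mathbb{Z}_p[[z]]$. Applying this with $g=\mathbf{G}_{\mathbf{N}}/\mathbf{F}_{\mathbf{N}}$, and using that $\mathbf{F}_{\mathbf{N}}(z)\in 1+z\mathbb{Z}_p[[z]]$ is invertible in $\mathbb{Z}_p[[z]]$ (which follows from the integrality of each $\mathbf{B}_{\mathbf{N}}(m)$, guaranteed by the Landau integrality criterion applied to the vectors $(\alpha_j)$, $(\beta_j)$ in \eqref{eq:aj}--\eqref{eq:bj}), the target is equivalent to the formal-series congruence
\begin{equation*}
p\,\mathbf{F}_{\mathbf{N}}(z^p)\,\mathbf{G}_{\mathbf{N}}(z)\;-\;\mathbf{F}_{\mathbf{N}}(z)\,\mathbf{G}_{\mathbf{N}}(z^p)\;\in\; p\,\mathbf{F}_{\mathbf{N}}(z)\,\mathbf{F}_{\mathbf{N}}(z^p)\,\mathbb{Z}_p[[z]].
\end{equation*}

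To establish this, I would develop Dwork-type congruences for the coefficients of both series. The backbone is the standard mod-$p$ congruence
\begin{equation*}
\mathbf{B}_{\mathbf{N}}(pm+r)\equiv \mathbf{B}_{\mathbf{N}}(m)\,\mathbf{B}_{\mathbf{N}}(r)\pmod{p},\qquad 0\le r<p,
\end{equation*}
which translates into $\mathbf{F}_{\mathbf{N}}(z)\equiv\mathbf{F}_{\mathbf{N}}(z^p)\,\mathfrak{F}_p(z)\pmod p$ for the polynomial truncation $\mathfrak{F}_p(z):=\sum_{r<p}\mathbf{B}_{\mathbf{N}}(r)z^r$. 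The companion statement for $\mathbf{G}_{\mathbf{N}}$ requires a refined congruence modulo $p^2$ involving the weighted harmonic sums, of the schematic shape
\begin{equation*}
\mathbf{B}_{\mathbf{N}}(pm+r)\,\bigl(\mathbf{H}_{\mathbf{N}}(pm+r)-\mathbf{H}_{\mathbf{N}}(m)\bigr)\;\equiv\; \mathbf{B}_{\mathbf{N}}(m)\,\mathbf{B}_{\mathbf{N}}(r)\,\mathbf{H}_{\mathbf{N}}(r)\pmod{p^2},
\end{equation*}
expressing the fact that $\mathbf{G}_{\mathbf{N}}$ is the ``$p$-adic logarithmic'' companion of $\mathbf{F}_{\mathbf{N}}$. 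Together, the two congruences yield the required divisibility displayed above.

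The main obstacle is precisely the refined mod-$p^2$ congruence. Factorial ratios such as $(\alpha_j(pm+r))!/(\alpha_j m)!$ must be expanded to second order in $p$, and the Fermat-quotient-type contributions coming from harmonic-sum differences $H_{\alpha_j(pm+r)}-H_{\alpha_j m}$ have to cancel against those arising from the prime-factor structure of $\mathbf{B}_{\mathbf{N}}$, requiring Wolstenholme-type input. The balance between the multisets $(\alpha_j)$ and $(\beta_j)$ built into \eqref{eq:aj}--\eqref{eq:bj} (with equal totals $\sum\alpha_j=\sum\beta_j$) is what powers the cancellations; making them explicit uniformly in $p$, $\mathbf{N}$, $m$, and $r$---in particular handling those primes $p$ that divide some $N_j$, where the straightforward mod-$p$ Dwork congruence must already be replaced by a more delicate statement---is the technical heart of the argument.
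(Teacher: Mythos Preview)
Your opening framework---working prime by prime, invoking the Dieudonn\'e--Dwork criterion, and reducing to the congruence $\mathbf{F}_{\mathbf{N}}(z)\mathbf{G}_{\mathbf{N}}(z^p)-p\mathbf{F}_{\mathbf{N}}(z^p)\mathbf{G}_{\mathbf{N}}(z)\in pz\mathbb{Z}_p[[z]]$---matches the paper exactly (this is its Lemma~\ref{lem:4}). The gap lies in your proposed execution.

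A single mod-$p$ congruence for $\mathbf{B}_{\mathbf{N}}$ together with a single ``mod-$p^2$'' congruence for $\mathbf{B}_{\mathbf{N}}\mathbf{H}_{\mathbf{N}}$ cannot close the argument. First, your displayed mod-$p^2$ relation is not well posed as written: $\mathbf{H}_{\mathbf{N}}(m)$ is built from harmonic numbers $H_{\alpha_j m}$ whose $p$-adic valuation can be as negative as $-\lfloor\log_p(\alpha_j m)\rfloor$, so ``$\equiv\pmod{p^2}$'' has no clear meaning here. More fundamentally, the $(a+Kp)$-th coefficient of the target series contains terms involving $H_{L(K-j)}$, and for large $K$ these force cancellations to \emph{arbitrarily high} $p$-adic depth, not merely to order~$2$. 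The paper proceeds quite differently: it first decomposes $\mathbf{G}_{\mathbf{N}}$ as an integer combination of the simpler series $\mathbf{G}_{L,\mathbf{N}}(z)=\sum_m H_{Lm}\,\mathbf{B}_{\mathbf{N}}(m)z^m$ and proves integrality for each $L$ separately (Theorem~\ref{thm:4}); for fixed $L$ it rewrites the relevant coefficient, via Dwork's combinatorial lemma, as a double sum $\sum_{s\ge0}\sum_m\mathbf{Y}_{m,s}$ and shows $\mathbf{Y}_{m,s}\in p\mathbb{Z}_p$ term by term. This requires the \emph{entire tower} of Dwork congruences $\mathbf{S}(a,K,s,p,m)\in p^{s+1}\mathbf{B}_{\mathbf{N}}(m)\mathbb{Z}_p$ for all $s\ge0$ (Lemma~\ref{lem:strat3}, obtained from Proposition~\ref{prop:dworkcongruence} with $A=g=\mathbf{B}_{\mathbf{N}}$), paired with a matching harmonic estimate (Lemma~\ref{lem:strat4}). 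Feeding into this machinery in turn demands a new reduction step, Lemma~\ref{lem:12a}, whose proof rests on the monotonicity of the Landau step-function $\Delta(x)=\sum_i\lfloor\alpha_i x\rfloor-\sum_j\lfloor\beta_j x\rfloor$ on $[0,1)$ (Lemma~\ref{lem:10a}); the paper singles this out as the crux that goes beyond earlier work restricted to prime or prime-power $N$. None of this multi-level structure is visible in your sketch, and the Wolstenholme-type input you anticipate (which already fails for $p=2,3$) is neither used nor needed.
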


It can be seen (see \cite[paragraph above Theorem~2]{zud},
respectively \eqref{eq:FN}--\eqref{eq:GLN} below) that
Conjecture~\ref{conj:zudilin1} is the special case of the above
conjecture where $k$ is replaced by $k\cdot d(N)$ (with $d(N)$
denoting the number of positive divisors of $N$), and where 
$\{N_1,\dots,N_{k\cdot d(N)}\}$ is the multiset (\footnote{A multiset 
is a ``set'' where one allows repetitions of elements.}) in which each
divisor of $N$ appears exactly $k$ times. 

Zudilin proved that his conjecture holds 
under the condition that if a prime number divides $N_1N_2\cdots
N_k$ then it also divides each $N_j$. 

We claim that Conjecture~\ref{conj:zudilin2} follows from the theorem
below, which is the first main result of the paper. For the statement
of the theorem,  
for an integer $L\ge 1$, we need to define  
$$
\mathbf{G}_{L, \mathbf{N}}(z) := \sum_{m=1}^{\infty} H_{Lm}
\bigg(\prod_{j=1}^k \mathbf{B}_{N_j}(m)\bigg) z^m. 
$$

\begin{theo}\label{thm:4} For any integers $N_1,N_2, \ldots, N_k\ge 1$ and 
$L\in \{1, 2, \ldots, \max(N_1, \ldots, N_k)\}$, we have 
$
{\bf q}_{L,\mathbf{N}} (z) :=
\exp(\mathbf{G}_{L,\mathbf{N}}(z)/\mathbf{F}_{\mathbf{N}}(z) 
)
\in\mathbb{Z}[[z]].
$
\end{theo}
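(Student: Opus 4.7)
The plan is to prove the theorem prime by prime: since $\mathbf{q}_{L,\mathbf{N}}(z)\in\mathbb{Q}[[z]]$ a priori and $\mathbb{Z}=\mathbb{Q}\cap\bigcap_p\mathbb{Z}_p$, it suffices to show $\mathbf{q}_{L,\mathbf{N}}(z)\in\mathbb{Z}_p[[z]]$ for every prime $p$. The natural gateway is Dwork's $p$-adic integrality criterion: $f\in 1+z\mathbb{Q}_p[[z]]$ lies in $\mathbb{Z}_p[[z]]$ as soon as $f(z^p)/f(z)^p\in 1+pz\mathbb{Z}_p[[z]]$, and for odd $p$ this reduces, after passing through $\log$ and clearing denominators (permissible because $\mathbf{F}_{\mathbf{N}}\in 1+z\mathbb{Z}[[z]]$ by Landau's criterion, so $\mathbf{F}_{\mathbf{N}}(z)\mathbf{F}_{\mathbf{N}}(z^p)$ is a $p$-adic unit in $\mathbb{Z}_p[[z]]$), to the polynomial congruence
\[
\mathbf{F}_{\mathbf{N}}(z)\,\mathbf{G}_{L,\mathbf{N}}(z^p)\equiv p\,\mathbf{F}_{\mathbf{N}}(z^p)\,\mathbf{G}_{L,\mathbf{N}}(z)\pmod{pz\,\mathbb{Z}_p[[z]]}.
\]
The prime $p=2$ would be handled separately by a direct check (or the minor strengthening that $u\in 4z\mathbb{Z}_2[[z]]$ is needed for $\exp(u)\in 1+2z\mathbb{Z}_2[[z]]$).

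The first key input is a Dwork-type Frobenius congruence on the coefficients $a_m:=\prod_{j=1}^{k}\mathbf{B}_{N_j}(m)$ of $\mathbf{F}_{\mathbf{N}}$, of the shape $a_{pm+r}\equiv \lambda_p(r)\,a_m\,a_r\pmod{p}$ for $0\le r<p$, with an explicit $p$-adic unit correction $\lambda_p(r)$. This can be extracted by a careful Legendre/Kummer valuation computation for each $\mathbf{B}_{N_j}$, or more uniformly by rewriting the factorial ratios in terms of the $p$-adic Gamma function $\Gamma_p$ and invoking Dwork's congruences for $\Gamma_p$. The second key input is the standard harmonic-number decomposition
\[
H_n=\frac{1}{p}\,H_{\lfloor n/p\rfloor}+\widetilde H_n,\qquad \widetilde H_n:=\sum_{\substack{1\le i\le n\\ p\nmid i}}\frac{1}{i}\in\mathbb{Z}_p,
\]
applied to $H_{L(pm+r)}$; this isolates the $\tfrac{1}{p}$-singular part of $\mathbf{G}_{L,\mathbf{N}}$ coefficient-wise. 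The hypothesis $L\le\max_j N_j$ enters precisely at this junction: it keeps the shift $\lfloor Lr/p\rfloor$ compatible with the residue-class data $(\alpha_j),(\beta_j)$ governing the factorials in $\mathbf{B}_{N_j}$, so the carry produced when $Lr\ge p$ is absorbed into the Frobenius congruence on $a_m$ rather than producing a genuine $p$-adic obstruction.

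Combining, both sides of the target congruence decompose into a singular $\tfrac{1}{p}H_{\lfloor\cdot/p\rfloor}$ piece and a $\mathbb{Z}_p$-integral remainder: the remainders, once multiplied by the extra factor of $p$ on the right and by $a_m\in\mathbb{Z}$ on the left, are already $\equiv 0\pmod{p}$, while the singular pieces on the two sides match by the Frobenius congruence for $a_m$. The main obstacle I anticipate is the uniform verification of that Frobenius congruence in the hardest regime --- namely when $p$ divides only some of the $N_j$'s, which is precisely the configuration that blocked Zudilin's argument; this seems to require the full $p$-adic Gamma function machinery (Dwork--Gross--Koblitz-type congruences) rather than bare Legendre-formula bookkeeping, together with a delicate control of how $L$ interacts with the residue vectors attached to each~$N_j$.
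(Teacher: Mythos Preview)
Your overall architecture is right and matches the paper's: reduce to each prime via Dwork's Lemma, i.e., show
\[
\mathbf{F}_{\mathbf{N}}(z)\,\mathbf{G}_{L,\mathbf{N}}(z^p)-p\,\mathbf{F}_{\mathbf{N}}(z^p)\,\mathbf{G}_{L,\mathbf{N}}(z)\in pz\,\mathbb{Z}_p[[z]],
\]
then use $pH_J\equiv H_{\lfloor J/p\rfloor}\pmod{p\mathbb{Z}_p}$ on the coefficient $\mathbf{C}(a+Kp)$. (Incidentally, Dwork's Lemma as stated by Lang works uniformly for all primes, so no separate treatment of $p=2$ is needed.)

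The genuine gap is the strength of the Frobenius congruence you invoke. A single-level congruence of the shape $a_{pm+r}\equiv \lambda_p(r)\,a_m\,a_r\pmod p$ is not enough, because after the harmonic-number reduction the expression you must show lies in $p\mathbb{Z}_p$ is
\[
\sum_{j=0}^K \mathbf{B}_{\mathbf N}(a+jp)\,\mathbf{B}_{\mathbf N}(K-j)\bigl(H_{L(K-j)}-H_{Lj+\lfloor La/p\rfloor}\bigr),
\]
and the harmonic factors here can have $p$-adic valuation as low as $-\lfloor\log_p(LK)\rfloor$. Replacing $\mathbf{B}_{\mathbf N}(a+jp)$ by $\lambda_p(a)\mathbf{B}_{\mathbf N}(j)\mathbf{B}_{\mathbf N}(a)$ up to an error in $p\mathbb{Z}_p$ therefore produces, after multiplication by these denominators, an error that need not lie in $p\mathbb{Z}_p$ at all once $K\ge p$. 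The ``singular pieces match'' heuristic collapses precisely because the singularity of $H$ is not one level deep but $\lfloor\log_p(LK)\rfloor$ levels deep.

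What the paper actually does to overcome this is two further nontrivial steps. First, one must show that the shift $\lfloor La/p\rfloor$ can be removed, i.e., $\mathbf{B}_{\mathbf N}(a+pj)\bigl(H_{Lj+\lfloor La/p\rfloor}-H_{Lj}\bigr)\in p\mathbb{Z}_p$; this is where the hypothesis $L\le\max_j N_j$ is genuinely used, via the monotonicity of the Landau step function $\Delta(x)=\sum_i\lfloor\alpha_i x\rfloor-\sum_i\lfloor\beta_i x\rfloor$ on $[0,1)$, and the paper singles this out as the hardest lemma. Second, after antisymmetrising in $j\leftrightarrow K-j$, one applies Dwork's combinatorial Lemma~4.2 to rewrite the sum as $\sum_{s,m}\mathbf Y_{m,s}$ with $\mathbf Y_{m,s}=(H_{Lmp^s}-H_{L\lfloor m/p\rfloor p^{s+1}})\,\mathbf S(a,K,s,p,m)$, and then invokes the full Dwork Formal Congruences Theorem (with $A=g=\mathbf{B}_{\mathbf N}$) to get $\mathbf S(a,K,s,p,m)\in p^{s+1}\mathbf{B}_{\mathbf N}(m)\mathbb{Z}_p$. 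It is this \emph{graded} family of congruences, modulo $p^{s+1}$ for every $s$, that compensates for the unbounded $p$-adic denominators of the harmonic numbers; a mod-$p$ congruence alone cannot.
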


An outline of the proof of this theorem is given in 
Section~\ref{sec:7}, with details being filled 
in in Sections~\ref{sec:1}--\ref{sec:6a}.

To see that Theorem~\ref{thm:4} implies Conjecture~\ref{conj:zudilin2}, 
we observe that, by \eqref{eq:rajout2},
$\sum_{j=1}^k \mathbf{H}_{N_j}(m)$ is a finite sum of terms of the form 
$\lambda H_{Lm}$, where $\la$ and $L$ are integers with 
$$
L\in \{1, 2,\dots, \max(N_1, \ldots, N_k)\}.
$$
Thus, $z^{-1}{\bf q}_{\mathbf{N}} (z)$ is a product of series 
$\mathbf{q}_{L, \mathbf{N}}(z)$, each one raised to an integer power. It follows that 
Conjecture~\ref{conj:zudilin2} implies Theorem~\ref{thm:4}, as claimed.

\subsection{Stronger integrality properties in special cases}
\label{sec:1.3}

Let $N_1,N_2,\dots,N_k$ be given positive integers (not necessarily
distinct). 
In the setting of Section~\ref{ssec:zudconj}
with $k$ replaced by $k\cdot (d(N_1)+d(N_2)+\dots+d(N_k))$, 
$d(N)$ again denoting the number of positive divisors of $N$, 
we consider the special case
where the vector $\mathbf{N}$ can be partitioned into $k$ 
blocks, the $j$-th block consisting of all the positive 
divisors of $N_j$, $j=1,2,\dots,k$. 
It can be seen
(cf.\ 
\cite[paragraph above Theorem~2]{zud}) that 
the functions $\mathbf{F}_{\mathbf{N}}(z)$, $\mathbf{G}_{\mathbf{N}}(z)$ and 
$\mathbf{G}_{L,\mathbf{N}}(z)$ then simplify to
\begin{gather}
 \sum_{m=0}^{\infty} 
\bigg(
\prod_{j=1}^k
\frac{(N_jm)!}{m!^{N_j}}  
\bigg) 
z^m, 
\label{eq:FN}
\\
\sum_{m=1}^{\infty} 
\bigg(\sum_{j=1}^kN_j(H_{N_jm}-H_m)\bigg)\bigg(\prod_{j=1}^k
\frac{(N_jm)!}{m!^{N_j}}  \bigg) z^m,
\label{eq:GN}
\\
\sum_{m=1}^{\infty} H_{Lm} \bigg(\prod_{j=1}^k
\frac{(N_jm)!}{m!^{N_j}}  \bigg) z^m,
\label{eq:GLN}
\end{gather}
respectively. In order to simplify notation, for the remainder of this
subsection we 
``redefine'' $\mathbf{N}$ by letting 
$\mathbf{N}=(N_1,N_2, \ldots, N_k)$, and we denote the
series in \eqref{eq:FN}, \eqref{eq:GN}, and \eqref{eq:GLN} respectively
by $F_{\mathbf{N}}(z)$, $G_{\mathbf{N}}(z)$, and $G_{L,\mathbf{N}}(z)$. 
Accordingly, we define 
$q_{\mathbf{N}}(z):=z\exp\big(G_{\mathbf{N}}(z)/F_{\mathbf{N}}(z)\big)$
and
$q_{L,\mathbf{N}}(z):=\exp\big(G_{L,\mathbf{N}}(z)/F_{\mathbf{N}}(z)\big).$ 

For the mirror map $q_{(N)}(z)$ arising for $k=1$,
physicists made the observation that, apparently,
\begin{equation}\label{eq:raflianyau}
\big(z^{-1}q_{(N)}(z)\big)^{1/N}\in \mathbb{Z}[[z]].
\end{equation}
This was proved by Lian and Yau \cite{lianyau2} for any prime number $N$, 
thus strengthening their result from~\cite{lianyau} mentioned after
Conjecture~\ref{conj:zudilin1}.
The observation \eqref{eq:raflianyau} leads naturally to the more
general question of determining the largest integer $V$ 
such that $\left(z ^{-1}q(z) \right) ^{1/V}\in 
\mathbb{Z}[[z]]$ for the mirror map $q(z)=q_{(N)}(z)$ in
\eqref{eq:raflianyau} or other mirror 
maps.~(\footnote{Let $q(z)$ be a given power 
series in $\mathbb{Z}[[z]]$, and let
$V$ be the largest integer with the property that $q(z)^{1/V}\in \mathbb
Z[[z]]$. Then $V$ carries complete information about {\it all\/}
integers with that property: namely,
the set of integers $U$ with $q(z)^{1/U}\in \mathbb
Z[[z]]$ consists of all divisors of $V$. Indeed, it is clear that all
divisors of $V$ belong to this set. Moreover, 
if $U_1$ and $U_2$ belong to this set, then also $\lcm(U_1,U_2)$ does
(cf.\ \cite[Lemma~5]{HeRSAA} for a simple proof 
based on B\'ezout's lemma).})    
While we are not able to give a precise answer, 
we shall prove the following result for a large class of mirror maps
which, as we explain in \cite{kratriv3}, comes very close 
to being optimal for this class.

\begin{theo} \label{thm:2}
For any integers $N_1, \ldots, N_k\ge 1$,
let $M_{\mathbf{N}}=\prod_{i=1}^k N_i!$. Furthermore,
let $\Th_L:=L!/\gcd(L!, L!\,H_L)$ be the denominator of $H_L$
when written as a reduced fraction.
Then, for all
$L\in \{1, 2, \ldots, \max(N_1, \ldots, N_k)\}$, we have 
$q_{L,\mathbf{N}}(z)^{\frac{\Th_L}{M_{\mathbf{N}}}} \in\mathbb{Z}[[z]].$
\end{theo}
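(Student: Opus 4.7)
The plan is a prime-by-prime $p$-adic argument extending the method used to prove Theorem~\ref{thm:4}. Since $\Theta_L$ divides $L!$, which divides $\max_i(N_i)!$ and hence $M_{\mathbf{N}}$, the quotient $U:=M_{\mathbf{N}}/\Theta_L$ is a positive integer, and the claim is equivalent to $q_{L,\mathbf{N}}(z)^{1/U}\in\mathbb{Z}_p[[z]]$ for every prime~$p$.

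First, I would reduce to a Frobenius-type congruence via the Dieudonn\'e--Dwork lemma applied to the exponential. Writing
\begin{equation*}
q_{L,\mathbf{N}}(z)^{1/U}=\exp\!\bigl(\tfrac{1}{U}G_{L,\mathbf{N}}(z)/F_{\mathbf{N}}(z)\bigr),
\end{equation*}
the $p$-adic integrality of the left-hand side is equivalent (for odd $p$, with the standard minor refinement at $p=2$) to the congruence
\begin{equation*}
G_{L,\mathbf{N}}(z^p)\,F_{\mathbf{N}}(z)-p\,G_{L,\mathbf{N}}(z)\,F_{\mathbf{N}}(z^p)\ \in\ U\,p\,z\,F_{\mathbf{N}}(z)\,F_{\mathbf{N}}(z^p)\,\mathbb{Z}_p[[z]],\qquad(\star)
\end{equation*}
obtained by clearing denominators. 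Theorem~\ref{thm:4} furnishes $(\star)$ in the weaker form with $U=1$; the task is to extract the additional $p$-adic divisibility by $U=M_{\mathbf{N}}/\Theta_L$.

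Second, I would analyse $(\star)$ coefficient by coefficient. Extracting the coefficient of~$z^n$, the left side becomes
\begin{equation*}
\sum_{\substack{m,m'\ge 0\\ m+pm'=n}}\Bigl(\prod_{i=1}^k\tfrac{(N_im)!}{m!^{N_i}}\Bigr)\Bigl(\prod_{i=1}^k\tfrac{(N_im')!}{m'!^{N_i}}\Bigr)\bigl(H_{Lm'}-p\,H_{Lm}\bigr),
\end{equation*}
and I would split the $p$-adic analysis into two independent bounds. The \emph{factorial} weights produce $p$-adic denominators uniformly controlled by $v_p(M_{\mathbf{N}})$, by Legendre's formula and a comparison with the corresponding integer coefficients of the denominator $F_{\mathbf{N}}(z)F_{\mathbf{N}}(z^p)$ appearing in~$(\star)$; this supplies the factor $M_{\mathbf{N}}$ in the numerator of~$U$. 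The \emph{harmonic} differences $H_{Lm'}-p\,H_{Lm}$ admit a Kummer-type reduction: splitting off the $p$-divisible indices gives $p\,H_n\equiv H_{\lfloor n/p\rfloor}\pmod{p\mathbb{Z}_p}$, and iterating this identity inside the sum until only the base harmonic number $H_L$ remains leaves a residue whose $p$-adic denominator is bounded by~$\Theta_L$, uniformly in~$n$; this supplies the factor $\Theta_L$ in the denominator of~$U$.

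The main obstacle is the harmonic-sum bound. Although the denominators of the individual harmonic numbers $H_{Lm}$ grow with~$m$, the structured cancellations forced by the Frobenius twist must telescope down to the fixed denominator~$\Theta_L$ of the base sum~$H_L$. Making this precise requires an inductive decomposition of $H_{Lm}$ along $v_p(m)$ into its $p$-stable and coprime-to-$p$ parts, culminating in the base case where only $H_L$ itself appears and supplies the denominator~$\Theta_L$. Once this uniform estimate is combined with the factorial bound, the congruence~$(\star)$ follows, and the Dwork criterion closes the proof of Theorem~\ref{thm:2}.
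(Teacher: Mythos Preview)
Your reduction to a congruence of the shape $(\star)$ via the Dieudonn\'e--Dwork criterion is correct and matches the paper. The gap lies in the ``two independent bounds'' strategy: the harmonic estimate you describe is false. After one application of $pH_{Lm}\equiv H_{\lfloor Lm/p\rfloor}\pmod{p\mathbb{Z}_p}$, writing $m=a+jp$ with $0\le a<p$, the relevant harmonic tail becomes $H_{Lj+\lfloor La/p\rfloor}-H_{Lj}=\sum_{\ep=1}^{\lfloor La/p\rfloor}(Lj+\ep)^{-1}$, whose $p$-adic valuation can be as negative as $-\lfloor\log_p(Lj)\rfloor$, hence unbounded in~$j$. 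Iterating the Kummer reduction does not telescope this down to $H_L$: repeated application drives the index toward~$0$, not~$L$, and the residues along the way have no uniform $\Theta_L$-bound (already for $L=1$, $m=p^2$ one has $v_p(pH_{p^2})=-1<0=-v_p(\Theta_1)$, so the harmonic factor alone does not stay in $\Theta_L^{-1}\mathbb{Z}_p$).

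What actually works is the \emph{coupling} of the two factors. The paper proves (Lemma~\ref{lem:12}) that $B_{\mathbf N}(a+pj)$ carries enough surplus $p$-divisibility to absorb the harmonic denominator, giving $B_{\mathbf N}(a+pj)\bigl(H_{Lj+\lfloor La/p\rfloor}-H_{Lj}\bigr)\in p\,\tfrac{M_{\mathbf N}}{\Theta_L}\,\mathbb{Z}_p$ directly; the proof is a Legendre-formula comparison of $v_p\bigl(B_{\mathbf N}(a+pj)\bigr)$ against $\max_{\ep} v_p(Lj+\ep)$, followed by a substantial case analysis (in $p$, $L$, $a$, and $j$) to squeeze out the extra factor $M_{\mathbf N}/\Theta_L$ beyond the $U=1$ bound. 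The constant $\Theta_L$ enters only at the boundary $j=0$, where the estimate degenerates to $B_{\mathbf N}(a)H_{La}\in\tfrac{M_{\mathbf N}}{\Theta_L}\mathbb{Z}_p$ and, for $a=1$, to the very definition of $\Theta_L$. Even after this lemma is in hand, the remaining antisymmetrised sum $\sum_j H_{Lj}\bigl(B_{\mathbf N}(a+jp)B_{\mathbf N}(K-j)-B_{\mathbf N}(j)B_{\mathbf N}(a+(K-j)p)\bigr)$ must be controlled via Dwork's combinatorial rearrangement \cite[Lemma~4.2]{dwork} together with his formal congruence theorem (Proposition~\ref{prop:dworkcongruence} applied with $A=g=B_{\mathbf N}$), neither of which your outline addresses.
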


\begin{Remarks} 
(a) For any integer $s \ge 1$, 
we have $q_{L,\mathbf{N}}(z)^{1/s}=1+s^{-1}M_{{\bf N}}H_L z + \mathcal{O}(z^2)$, 
and hence  Theorem~\ref{thm:2} is optimal when $L=1$. This is not necessarily the case for 
other values of $L$ and, in particular, Theorem~\ref{thm:2} can be improved 
when $L=N_1=\cdots =N_k$. 
See~\cite{kratriv3} for such results.

\smallskip
(b)
It is natural to expect 
refinements of Theorem~\ref{thm:4} in the spirit of
Theorem~\ref{thm:2}. We did not make a systematic research in
this direction, but it could be  
interesting to do so. For example, in the case $k=1$ and $\mathbf{N}=(6)$, 
it seems that the following relations are best possible:
${\bf q}_{1,(6)}(z)^{1/60}$,  
${\bf q}_{2,(6)}(z)^{1/6}$,  
${\bf q}_{3,(6)}(z)^{1/2}$, 
${\bf q}_{4,(6)}(z)$, 
${\bf q}_{5,(6)}(z)$ and 
${\bf q}_{6,(6)}(z)$ are in $\mathbb{Z}[[z]]$. 
As a first step towards such refinements, 
we prove in Lemma~\ref{lem:diviBB} in Section~\ref{sec:8} that
$\mathbf{B}_{\mathbf{N}}(1)$ always divides  
$\mathbf{B}_{\mathbf{N}}(m)$ for any $m\ge 1$ and any $\mathbf{N}$,
where $\mathbf{B}_{\mathbf{N}}(m):=\prod_{j=1}^k \mathbf B_{N_j}(m)$.
Our techniques enable us to deduce that  
${\bf q}_{1,\mathbf{N}}(z)^{1/\mathbf{B}_{\mathbf{N}}(1)} \in
\mathbb{Z}[[z]]$
(cf.\ Remark~\ref{foot:1} in Section~\ref{sec:7}), 
which proves the above assertion that 
${\bf q}_{1,(6)}(z)^{1/60}\in \mathbb{Z}[[z]]$.
This is optimal because ${\bf q}_{1,\mathbf{N}}(z)=
1+\mathbf{B}_{\mathbf{N}}(1)z+\mathcal{O}(z^2)$. 
It turns out that 
$\mathbf{B}_{\mathbf{N}}(1)$ is a natural generalisation of
the quantity $M_{\mathbf{N}}$ which
appears in Theorem~\ref{thm:2}. However, for larger values of the
parameter $L$, we do not know what 
the analogue of the quantity $M_{\mathbf{N}}/\Th_L$
appearing in Theorem~\ref{thm:2} would be.
\end{Remarks}

An outline of the proof of Theorem~\ref{thm:2} is given in
Section~\ref{sec:2}, with details being filled in in
Sections~\ref{sec:4}--\ref{sec:6}.

Due to the equation
\begin{equation}\label{eq:truemap}
q_{(N,\dots,N)}(z)=zq_{N,(N,\dots,N)}(z)^{kN}q_{1,(N,\dots,N)}(z)^{-kN}, 
\end{equation} 
(with $k$ occurrences of $N$ in $(N,\dots,N)$),
Theorem~\ref{thm:2} has the following
consequence for the mirror map $q_{(N,\dots,N)}(z)$, 
thus improving significantly upon \eqref{eq:raflianyau}.

\begin{coro} \label{coro:1}
For all integers $k\ge 1$ and $N\ge 1$, we have 
$$
\big(z^{-1}q_{(N,\dots,N)}(z)\big)^{\frac{\Th_N}{N!^k kN}} \in \mathbb{Z}[[z]].
$$
\end{coro}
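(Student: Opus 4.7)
The plan is to deduce the corollary directly from Theorem~\ref{thm:2} by combining it with the factorisation \eqref{eq:truemap}. Specialising Theorem~\ref{thm:2} to $\mathbf{N}=(N,N,\dots,N)$ with $k$ entries, so that $M_{\mathbf{N}}=N!^k$, I would first apply it with $L=N$ to obtain
$$
q_{N,\mathbf{N}}(z)^{\Th_N/N!^k}\in\mathbb{Z}[[z]],
$$
and then with $L=1$, noting that $\Th_1=1$, to obtain
$$
q_{1,\mathbf{N}}(z)^{1/N!^k}\in\mathbb{Z}[[z]].
$$

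Next I would handle the negative exponent. Since $q_{1,\mathbf{N}}(z)=\exp\bigl(G_{1,\mathbf{N}}(z)/F_{\mathbf{N}}(z)\bigr)$ has constant term $1$, so does $q_{1,\mathbf{N}}(z)^{1/N!^k}$. An element of $1+z\mathbb{Z}[[z]]$ has its multiplicative inverse in $1+z\mathbb{Z}[[z]]$, and raising that inverse to the positive integer power $\Th_N$ preserves integrality, so
$$
q_{1,\mathbf{N}}(z)^{-\Th_N/N!^k}\in\mathbb{Z}[[z]].
$$

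Finally I would raise \eqref{eq:truemap} to the $\Th_N/(N!^k\,kN)$-th power, so that the factor $kN$ in the exponents cancels exactly against the $kN$ in the denominator, yielding
$$
\bigl(z^{-1}q_{(N,\dots,N)}(z)\bigr)^{\Th_N/(N!^k\,kN)}=q_{N,\mathbf{N}}(z)^{\Th_N/N!^k}\cdot q_{1,\mathbf{N}}(z)^{-\Th_N/N!^k},
$$
which is a product of two elements of $\mathbb{Z}[[z]]$ by the previous two steps, hence itself in $\mathbb{Z}[[z]]$. There is essentially no obstacle here: once Theorem~\ref{thm:2} is on hand, the corollary is a purely formal consequence of \eqref{eq:truemap}, the exact cancellation of the factor $kN$ in the exponent, and the standard fact that a power series in $1+z\mathbb{Z}[[z]]$ has its reciprocal in $1+z\mathbb{Z}[[z]]$. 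All the substantial work is absorbed into the proof of Theorem~\ref{thm:2} itself.
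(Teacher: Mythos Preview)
Your proposal is correct and follows exactly the approach indicated in the paper: the authors state that Corollary~\ref{coro:1} is a consequence of Theorem~\ref{thm:2} via the factorisation~\eqref{eq:truemap}, and you have supplied precisely those details (applying Theorem~\ref{thm:2} with $L=N$ and $L=1$, handling the inverse, and cancelling the factor $kN$).
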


\begin{Remarks}
(a) In particular, in the emblematic case of the mirror map $q_{(5)}(z)$ of
the quintic (case $N=5$, $k=1$), we obtain that  
$\big(z^{-1}q_{(5)}(z)\big)^{1/10} \in \mathbb{Z}[[z]]$, which improves
on~\eqref{eq:raflianyau} by a factor of $2$. 
\smallskip

(b)
Also Corollary~\ref{coro:1} can be improved. 
The corresponding result, which is optimal subject to a widely believed
conjecture on harmonic numbers, 
is very technical. We refer the reader again to our
article~\cite{kratriv3}.  
\end{Remarks}

\subsection{Method of proof}
\label{sec:method}

The basic idea is to transfer the original integrality assertions 
into a $p$-adic framework 
(by means of Lemma~\ref{lem:1}), to a point where we can 
employ Dwork's theory of formal congruences. This theory
seems to provide the most powerful tools available for attacking integrality
assertions of the type discussed in this paper.
We recall the corner stones of Dwork's theory in Section~\ref{sec:1}.

We draw the reader's attention to the fact that, while the general
line of our approach follows that of previous authors
(particularly \cite{lianyau2}), there
does arise a crucial difference (other than just technical
complications): the reduction and rearrangement of the sums
$\mathbf C(a+Kp)$ in Section~\ref{sec:7},
respectively $C(a+Kp)$ in Section~\ref{sec:2}, 
via the congruence \eqref{eq:J} 
require a new reduction step, namely
Lemma~\ref{lem:12a}, respectively Lemma~\ref{lem:12}. In fact, the
proofs of these two lemmas 
form the most difficult parts of our
paper. (In previous work, the
use of \eqref{eq:J} sufficed because the corresponding authors
restricted themselves to $N$ being prime or a prime power.)

\subsection{Related work and perspectives}
\label{sec:relwork}

One of the goals of the present paper is to highlight
number-theoretic phenomena in the context of mirror symmetry theory.
Our analysis is clearly on the number theory side.
Nevertheless, we hope that our results contribute to the clarification
of such phenomena. On the other hand, it should not be hidden that 
integrality phenomena should also have intrinsic
significance, within mirror symmetry. One such example is the
(apparent) coincidence of coefficients in Yukawa couplings with
numbers of certain rational curves (cf.\ 
\cite{givental,lianliuyau}; see also the discussion in~\cite[p.~49]{VoisAA}). 

In the context of our results, it is possible to prove similar,
but strictly weaker, statements by means of methods of arithmetic 
geometry. This is the case for the  
preprint~\cite{volog}, which is an elaborate version of
\cite{KoSVAA}. The mirror maps that are considered in that paper comprise 
ours. When both approaches apply simultaneously, our results in 
Theorems~\ref{thm:4} and \ref{thm:2} 
are stronger than Theorem~2 in \cite[Sec.~1.3]{volog}.
Indeed, we prove that certain mirror maps have integral Taylor
coefficients, while in \cite{volog} the weaker statement is proved that 
mirror maps have Taylor coefficients in $\mathbb{Z}[1/n]$, where
$n$ is an integer parameter of geometric origin which is at  
least $2$ (by assumption iii) just before Theorem~2 in \cite{volog}).

We also want to point out that the range of application of
Dwork's ideas is not restricted to 
$p$-adic functions in one variable. In \cite{KrRiAG}, we extend
Dwork's theory as outlined in Section~\ref{sec:1} to several variables. As 
applications, we obtain integrality properties of mirror maps in several 
variables arising in the context of the very general 
multivariable mirror maps coming from 
the Gelfand--Kapranov--Zelevinsky hypergeometric series 
(see~\cite[Sec.~7.1]{batstrat},~\cite{hosono} and~\cite[Sec.~8]{stienstra} 
for numerous examples related to Calabi--Yau manifolds which are 
complete intersections in products of weighted projective spaces). As
a by-product, by appropriate 
specialisations, we are even able to prove 
(predicted) integrality of the Taylor coefficients of some mirror maps in one 
variable that do not fall under the scope of the results of the present paper, 
in particular many of those in the table presented in~\cite{aesz}.

We therefore believe that Dwork's methods provide valuable insight in
integrality properties of mirror maps. Since the power and range of
applicability of these methods have apparently not yet been exhausted,
their development should be further pursued. In particular, we hope to be 
able to test them against the difficult number-theoretical 
problems raised by Yukawa couplings.

Finally, we point out that the quantity $\mathbf B_N(m)$
defined in \eqref{eq:Bzudilin}, which is the crucial building
block for the series $\mathbf F_{\mathbf N}(z)$,
$\mathbf G_{\mathbf N}(z)$, and $\mathbf G_{L,\mathbf N}(z)$, is in
fact an integer for all $N$ and $m$. This follows from the criterion
\cite{LandAA} of Landau, which, applied to our case, says that, for a fixed $N$, 
$\mathbf B_N(m)$ is integral for all $m$ if and only if
$$\sum_{j=1}^\mu \fl{\alpha_j x}-\sum_{j=1}^\eta \fl{\beta_j x}\ge0
$$
for all non-negative real numbers $x$. Indeed, the above quantity is
exactly the quantity $\Delta(x)$ defined in Lemma~\ref{lem:10a} in
Section~\ref{sec:8}, which plays a crucial role in the proof of
Lemma~\ref{lem:12a} in Section~\ref{sec:9}, and, thus, in the proof
of Theorem~\ref{thm:4}. More precisely, amongst its properties, we 
use many times 
the fact that it is weakly increasing on $[0,1)$. 
We point out that such ``Landau functions'' were introduced 
in the theory of mirror symmetry of Calabi--Yau threefolds 
by Rodriguez-Villegas~\cite{villegas}, who used them to 
prove that there are exactly 14 hypergeometric functions whose 
coefficients can be written as integral quotients of factorials (after 
rescaling the variable $z$ to $Cz$ for some $C$) 
and with the MUM property at the origin. 
In~\cite{villegas2}, he also used Landau functions to 
characterise algebraic hypergeometric series  whose 
coefficients can be written as integral quotients of factorials after 
rescaling:
using the theory of Beukers and Heckman~\cite{beukheck}, he proved that 
such series are algebraic if and only if 
$\Delta(x)\in\{0,1\}$. Of course, it is not always possible to write 
the Taylor coefficients of a hypergeometric series as quotients of 
factorials.

It can be proved that amongst hypergeometric series 
whose Taylor coefficients are integral quotients of factorials, 
the weak 
increase of the associated Landau function $\Delta$ is 
equivalent to the MUM property. This is a consequence of more general 
results of Eric Delaygue, in a thesis currently 
being written under the supervision of the second author.
Thus, from the point of view of mirror symmetry where the MUM property is 
essential, our Theorem~\ref{thm:4} is 
best possible in that class of hypergeometric series.
The question then naturally arises which properties the
Landau function $\Delta$ must have such that 
the mirror map formally associated to it (by forming 
functions 
analogous to $\bf F_{\bf N}$ and $\bf G_{\bf N}$) has integral Taylor 
coefficients 
(even when a mirror symmetry interpretation is not available;
for example, when the MUM property does not hold).
We believe that the following statement  
is true: the formal mirror map has integral Taylor coefficients if and only if 
$\Delta(x)$, defined on $[0,1)$, remains $\ge 1$ 
after its first jump from $0$.
New ideas are necessary to solve this problem because the approach used 
in this paper does not work in this more general setting; indeed there are counterexamples 
to almost all our lemmas when $\Delta$ is not weakly increasing. 
This is currently under investigation by Eric Delaygue.

\subsection{Structure of the paper}

We now briefly review the organisation of 
the rest of the paper.

The proofs of our theorems being highly complex,
we start with brief outlines of the proofs of Theorems~\ref{thm:4} and
\ref{thm:2} in Sections~\ref{sec:7} and \ref{sec:2}, respectively.
These outlines reduce the proofs to a certain number of lemmas. 
The reduction is
heavily based on Dwork's theory of formal congruences, the relevant 
pieces of which being recalled in Section~\ref{sec:1}. 
The lemmas which are necessary for the proof of Theorem~\ref{thm:4} 
are subsequently established in 
Sections~\ref{sec:8}--\ref{sec:6a},
while those necessary for the proof of Theorem~\ref{thm:2} are established in
Sections~\ref{sec:4}--\ref{sec:6}.

\section{Outline of the proof of Theorem~\ref{thm:4}}
\label{sec:7}

In this section, we provide a brief outline of the proof of
Theorem~\ref{thm:4}. As we already said in the introduction, the
proof follows the $p$-adic approach pioneered by Dwork
\cite{dworkihes,dwork}, of which we review its corner stones in 
Section~\ref{sec:1}. We show that this approach allows us to reduce the
proof of Theorem~\ref{thm:4} to
Lemmas~\ref{lem:12a}--\ref{lem:strat4}. These lemmas are subsequently
proved in Sections~\ref{sec:9}--\ref{sec:6a}, with four auxiliary lemmas
being the subject of Section~\ref{sec:8}.

By Dwork's Lemma 
given in Section~\ref{sec:1} (or rather its consequence given in
Lemma~\ref{lem:4}), we want to prove that  
$$\mathbf F_{\mathbf N}(z)\mathbf G_{L,\mathbf N}(z^p)
-p\mathbf F_{\mathbf N}(z^p)\mathbf G_{L,\mathbf
N}(z) \in p z \mathbb{Z}_p[[z]].$$ 

The $(a+Kp)$-th Taylor coefficient of
$\mathbf F_{\mathbf N}(z)\mathbf G_{L,\mathbf N}(z^p)
-p\mathbf F_{\mathbf N}(z^p)\mathbf G_{L,\mathbf
N}(z)$ is  
\begin{equation} \label{eq:firstreductiona} 
\mathbf C(a+Kp):=\sum_{j=0}^K 
\mathbf B_{\mathbf N}(a+jp)\mathbf B_{\mathbf N}(K-j) 
(H_{L(K-j)}-pH_{La+Ljp}),
\end{equation}
where $\mathbf B_{\mathbf N}(m):=\prod_{j=1}^k \mathbf B_{N_j}(m),$
the quantities $\mathbf B_{N_j}(m)$ being defined in 
\eqref{eq:Bzudilin}.
In view of Lemma~\ref{lem:4}, proving 
Theorem~\ref{thm:4} is equivalent to proving that 
\begin{equation} \label{eq:Cconga} 
\mathbf C(a+Kp) \in p \mathbb{Z}_p
\end{equation}
for all primes $p$ and non-negative integers $a$ and $K$ with $0\le a<p$.
Since 
$$
H_J = \sum_{j=1} ^{\fl{J/p}} \frac{1}{pj} + 
\underset{p\nmid j}{\sum_{j=1} ^{J}}\;
\frac{1}{j},
$$
we have
\begin{equation} \label{eq:J} 
pH_{J} \equiv H_{\fl{J/p}} \mod p\mathbb{Z}_p.
\end{equation}
Applying this with $J=La+Ljp$, we get
$$
pH_{La+Ljp}\equiv H_{\lfloor La/p\rfloor+Lj} \mod p\mathbb{Z}_p.
$$
This implies that
\begin{equation} \label{eq:Cequiv}
\mathbf C(a+Kp)  \equiv \sum_{j=0}^K 
\mathbf B_{\mathbf N}(a+jp)\mathbf B_{\mathbf N}(K-j) (H_{L(K-j)}-H_{\lfloor
La/p\rfloor+Lj}) \mod p \mathbb{Z}_p.
\end{equation}

We now want to transform the sum on the right-hand side 
of~\eqref{eq:Cequiv} to a more manageable expression. 
In particular, we want to get rid of the floor function $\fl{La/p}$.
In order to achieve this, we will prove the following lemma
in Section~\ref{sec:9}.

\begin{lem}\label{lem:12a}
For any prime $p$, non-negative integers $a$ and $j$ with $0\le a<p$, 
positive integers $N_1,N_2,\dots,N_k$, and $L\in\{1,2, \ldots, 
\max(N_1, \ldots, N_k)\}$, we have 
\begin{equation} \label{eq:congrconj1a} 
\mathbf{B}_{\mathbf{N}}(a+pj)\big(H_{Lj+ \lfloor La/p\rfloor}
-H_{Lj}\big) \in p\mathbb{Z}_p. 
\end{equation}
\end{lem}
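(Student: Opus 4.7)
The plan is to reduce the statement to a pointwise $p$-adic valuation estimate. Writing
\[
H_{Lj+s_0}-H_{Lj}=\sum_{i=1}^{s_0}\frac{1}{Lj+i},\qquad s_0:=\lfloor La/p\rfloor,
\]
it suffices---the case $s_0=0$ being trivial---to prove, for each $i\in\{1,\ldots,s_0\}$, that
\[
v_p\bigl(\mathbf{B}_{\mathbf{N}}(a+pj)\bigr)\ge v_p(Lj+i)+1.
\]

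The main tool is the Legendre--Landau formula
\[
v_p\bigl(\mathbf{B}_{\mathbf{N}}(m)\bigr)=\sum_{s\ge1}\Delta\bigl(\{m/p^s\}\bigr),
\]
in which $\Delta(x):=\sum_{j=1}^{\mu}\lfloor\alpha_j x\rfloor-\sum_{j=1}^{\eta}\lfloor\beta_j x\rfloor$ is the Landau function associated with $\mathbf{N}$. Lemma~\ref{lem:10a} provides that $\Delta$ is non-negative, integer-valued, $1$-periodic, and weakly increasing on $[0,1)$. Setting $N_r:=\max(N_1,\ldots,N_k)$, the specific structure of $\mathbf{B}_{N_r}$---in which $N_r$ is the unique $\alpha$ strictly greater than $N_r/2$, while every $\beta$ is at most $N_r/2$---yields $\Delta(1/N_r)\ge 1$, and hence $\Delta(x)\ge 1$ for all $x\in[1/N_r,1)$ by monotonicity.

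Writing $t:=v_p(Lj+i)$, the key claim I would prove is that
\[
\{(a+pj)/p^s\}\ge \tfrac{1}{N_r}\qquad\text{for every }s\in\{1,\ldots,t+1\};
\]
monotonicity of $\Delta$ then forces each of the first $t+1$ summands in the Legendre--Landau formula to be at least $1$, giving $v_p(\mathbf{B}_{\mathbf{N}}(a+pj))\ge t+1$. For $s=1$ the claim reads $a/p\ge 1/N_r$, immediate from $\lfloor La/p\rfloor\ge 1$ and $L\le N_r$. For $s\in\{2,\ldots,t+1\}$, the divisibility $p^{s-1}\mid Lj+i$ yields $L(j\bmod p^{s-1})+i=Kp^{s-1}$ for some integer $K$; since the left-hand side is positive and divisible by $p^{s-1}$, necessarily $K\ge 1$. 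Solving for $j\bmod p^{s-1}$ and substituting into the (easily verified) equality $\{(a+pj)/p^s\}=(a+p(j\bmod p^{s-1}))/p^s$, the desired inequality becomes $(N_rK-L)p^s\ge N_r(ip-La)$; since $N_rK\ge L$ (as $K\ge 1$ and $N_r\ge L$) and $ip\le La$ (our hypothesis), both sides have the correct signs and the bound holds.

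The main obstacle is establishing the uniform lower bound $\Delta(x)\ge 1$ on $[1/N_r,1)$, which amounts to identifying $N_r$ as the ``dominant'' $\alpha$ in the combined Landau function of $\mathbf{N}$ and invoking the weak-monotonicity property. This rests squarely on the detailed analysis of $\Delta$ carried out in Section~\ref{sec:8} (especially Lemma~\ref{lem:10a}). Once it is in place, the scheme handles all cases uniformly---with no separate treatment needed for $p\mid L$ or for multi-component $\mathbf{N}$---because the identity $L(j\bmod p^{s-1})+i=Kp^{s-1}$ with $K\ge 1$ is valid regardless of the arithmetic relations between $L$ and $p$, and the hypothesis $L\le N_r$ is precisely the ingredient that ties the behaviour of $L$ to the jump structure of $\Delta$.
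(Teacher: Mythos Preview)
Your proof is correct and follows the same strategy as the paper's Section~\ref{sec:9}: reduce to the termwise bound $v_p\bigl(\mathbf{B}_{\mathbf{N}}(a+pj)\bigr)\ge v_p(Lj+i)+1$, then use the Landau function $\Delta$ together with Lemma~\ref{lem:10a} to show that the first $t+1$ summands in the Legendre formula are each $\ge 1$. Your execution is slightly more direct in that you bound $\{(a+pj)/p^s\}\ge 1/N_r$ explicitly (via the relation $L(j\bmod p^{s-1})+i=Kp^{s-1}$ with $K\ge1$), whereas the paper first compares $\Delta_k$ at $(a+jp)/p^\ell$ to its value at the auxiliary point $(pj+p\varepsilon/L)/p^\ell$ by periodicity and monotonicity and then invokes Lemma~\ref{lem:10a}(iv); this is a variation in execution rather than in approach.
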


It follows from Eq.~\eqref{eq:Cequiv} and Lemma~\ref{lem:12a} that 
 $$
\mathbf C(a+Kp) \equiv \sum_{j=0}^K \mathbf B_{\mathbf N}(a+jp)
\mathbf B_{\mathbf N}(K-j) \Big(H_{L(K-j)}-H_{Lj}\Big)
\mod p\mathbb{Z}_p\ ,
$$
which can be rewritten as 
\begin{equation}\label{eq:beforelemma4.2a}
\mathbf C(a+Kp) \equiv -\sum_{j=0}^K
H_{Lj}\Big(\mathbf B_{\mathbf N}(a+jp)\mathbf B_{\mathbf N}(K-j)-
\mathbf B_{\mathbf N}(j)\mathbf B_{\mathbf N}(a+(K-j)p)\Big) 
\mod p\mathbb{Z}_p\ .
\end{equation}

We now use a combinatorial lemma due to Dwork
(see~\cite[Lemma~4.2]{dwork}) which provides   
an alternative way to write the sum on the
right-hand side of~\eqref{eq:beforelemma4.2a}: namely, we have 
\begin{equation} \label{eq:107a}
\sum_{j=0}^K H_{Lj}\Big(\mathbf B_{\mathbf N}(a+jp)\mathbf B_{\mathbf
N}(K-j)-\mathbf B_{\mathbf N}(j)\mathbf B_{\mathbf N}(a+(K-j)p)\Big)
=
\sum _{s=0} ^{r}
\sum _{m=0} ^{p^{r+1-s}-1}\mathbf Y_{m,s},
\end{equation}
where $r$ is such that $K<p^r$, and 
\begin{equation*} 
\mathbf Y_{m,s}:=\big(H_{Lmp^s}-H_{L\fl{m/p}p^{s+1}}\big)\mathbf S(a,K,s,p,m),
\end{equation*}
the expression $\mathbf S(a,K,s,p,m)$ being defined by
$$
\mathbf S(a,K,s,p,m):=\sum _{j=mp^s} ^{(m+1)p^s-1}
\big(\mathbf B_{\mathbf N}(a+jp)\mathbf B_{\mathbf N}(K-j)
-\mathbf B_{\mathbf N}(j)\mathbf B_{\mathbf N}(a+(K-j)p)
\big).
$$
In this expression for $\mathbf{S}(a,K,s,p,m)$, it is assumed that
$\mathbf{B}_{\mathbf N}(n)=0$ for negative integers~$n.$ 

It would suffice to prove that 
\begin{equation} \label{eq:Yms} 
\mathbf Y_{m,s}\in p\mathbb Z_p
\end{equation}
for all $m$ and $s$, because~\eqref{eq:beforelemma4.2a} and \eqref{eq:107a}
would then imply that $\mathbf C(a+Kp)\in p\mathbb Z_p$, as desired.

We will prove \eqref{eq:Yms} in the following manner.
The expression for $\mathbf S(a,K,s,p,m)$ is of the form considered in 
Proposition~\ref{prop:dworkcongruence} 
in Section~\ref{sec:1}. 
The proposition will enable us to prove the
following fact in 
Section~\ref{sec:10}. 

\begin{lem} \label{lem:strat3} 
For all primes $p$ and
non-negative integers $a,m,s,K$ with $0\le a<p$, we have 
$$
\mathbf S(a,K,s,p,m) \in p^{s+1}\mathbf{B}_{\mathbf{N}}(m) \mathbb{Z}_p.
$$
\end{lem}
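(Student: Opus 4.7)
The plan is to apply Proposition~\ref{prop:dworkcongruence} directly to the block sum $\mathbf S(a,K,s,p,m)$. Reindexing $j=mp^s+i$ with $0\le i<p^s$, the summand
$$\mathbf B_{\mathbf N}(a+(mp^s+i)p)\,\mathbf B_{\mathbf N}(K-mp^s-i)-\mathbf B_{\mathbf N}(mp^s+i)\,\mathbf B_{\mathbf N}(a+(K-mp^s-i)p)$$
has exactly the antisymmetric two-factor structure to which the proposition is tailored: the two halves differ only by which of the two $\mathbf B_{\mathbf N}$-factors absorbs the residue $a\in\{0,\dots,p-1\}$ together with the Frobenius shift by $p$.

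To apply the proposition, I first verify that the sequence $\mathbf B_{\mathbf N}(n)=\prod_{j=1}^k\mathbf B_{N_j}(n)$ satisfies its Dwork-type hypotheses. Since each factor $\mathbf B_{N_j}$ is a Landau-integral quotient of factorials, the monotonicity of the associated step-function $\Delta$ (discussed in Section~\ref{sec:relwork} and proved in detail in Section~\ref{sec:8}) delivers both the $p$-integrality of the Dwork quotients $\mathbf B_{\mathbf N}(n)/\mathbf B_{\mathbf N}(\fl{n/p})$ and the iterated congruence
$$\mathbf B_{\mathbf N}(n+hp^{s+1})\,\mathbf B_{\mathbf N}(\fl{n/p})\equiv\mathbf B_{\mathbf N}(n)\,\mathbf B_{\mathbf N}(\fl{n/p}+hp^s)\pmod{p^{s+1}\mathbb Z_p}$$
for all non-negative integers $n,h$, which is the main input of Proposition~\ref{prop:dworkcongruence}. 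I then extract $\mathbf B_{\mathbf N}(m)$ from the block by writing $\mathbf B_{\mathbf N}(mp^s+i)=\mathbf B_{\mathbf N}(m)\cdot\bigl(\mathbf B_{\mathbf N}(mp^s+i)/\mathbf B_{\mathbf N}(m)\bigr)$ and similarly for $\mathbf B_{\mathbf N}(a+(mp^s+i)p)$, the quotients being $p$-integral by the above. The proposition, applied to the residual difference, supplies divisibility by $p^{s+1}\mathbb Z_p$, whence $\mathbf S(a,K,s,p,m)\in p^{s+1}\mathbf B_{\mathbf N}(m)\mathbb Z_p$.

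The main obstacle I anticipate is extracting the full exponent $p^{s+1}$ rather than merely $p\mathbb Z_p$. A single application of the basic Dwork congruence yields only one power of $p$; the additional $p^s$ arises from iterating the congruence $s$ times along the block while carefully matching the two halves of the antisymmetric summand at each step, so that no $p$-adic precision is lost and so that the common factor $\mathbf B_{\mathbf N}(m)$ is pulled out consistently from both halves. This bookkeeping is precisely what Proposition~\ref{prop:dworkcongruence} is designed to encapsulate; once its hypotheses are verified for $\mathbf B_{\mathbf N}$, the lemma follows by a direct specialisation of its conclusion.
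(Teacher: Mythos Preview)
Your high-level strategy—apply Proposition~\ref{prop:dworkcongruence} with $A=\mathbf B_{\mathbf N}$—is exactly the paper's, and the lemma would indeed drop out once hypothesis~(iii) is verified with the choice $g=\mathbf B_{\mathbf N}$. The gap is in what you actually propose to verify and how you propose to use it.

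The congruence you write down,
$$\mathbf B_{\mathbf N}(n+hp^{s+1})\,\mathbf B_{\mathbf N}(\fl{n/p})\equiv\mathbf B_{\mathbf N}(n)\,\mathbf B_{\mathbf N}(\fl{n/p}+hp^s)\pmod{p^{s+1}\mathbb Z_p},$$
is \emph{not} condition~(iii) with $g=\mathbf B_{\mathbf N}$. After clearing denominators, (iii) with $A=g=\mathbf B_{\mathbf N}$ demands the same identity modulo $p^{s+1}\mathbf B_{\mathbf N}(h)\,\mathbf B_{\mathbf N}(\fl{n/p})\,\mathbb Z_p$; it is precisely that extra factor $g(h)=\mathbf B_{\mathbf N}(h)$ which produces $\mathbf B_{\mathbf N}(m)$ in the conclusion~\eqref{eq:newcongruence2}. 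Your workaround—pull $\mathbf B_{\mathbf N}(m)$ out of $\mathbf B_{\mathbf N}(j)$ and $\mathbf B_{\mathbf N}(a+jp)$ and then apply the proposition to the ``residual difference''—does not work: after dividing only two of the four factors by $\mathbf B_{\mathbf N}(m)$ the resulting sum is no longer of the shape $\sum_j\bigl(A(a+jp)A(K-j)-A(j)A(a+(K-j)p)\bigr)$ for any single $A$, so the proposition cannot be invoked on it. And combining the two separate conclusions $\mathbf S\in p^{s+1}\mathbb Z_p$ and $\mathbf S\in\mathbf B_{\mathbf N}(m)\mathbb Z_p$ only yields $v_p(\mathbf S)\ge\max\bigl(s+1,\,v_p(\mathbf B_{\mathbf N}(m))\bigr)$, not the required sum $s+1+v_p(\mathbf B_{\mathbf N}(m))$.

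Secondly, your claim that monotonicity of $\Delta$ alone delivers the needed congruence is too optimistic. In the paper's verification of~(iii) with $g=\mathbf B_{\mathbf N}$ (Section~\ref{sec:10}), monotonicity of $\Delta$ enters only via Lemma~\ref{lem:ultime}, which handles the final divisibility checks~\eqref{eq:check}. The heart of the argument—establishing the approximations \eqref{eq:main1} and \eqref{eq:main2}—relies instead on the Pochhammer representation~\eqref{eq:definitionBNgras} of $\mathbf B_{N_j}$ and on the $p$-adic gamma function (Lemma~\ref{lem:gammap}); these are what generate the $\mathcal O(p^{s+1})$ error terms, and they are not consequences of the step-function analysis.
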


Furthermore, in Section~\ref{sec:6a} we shall prove the following lemma.

\begin{lem} \label{lem:strat4} 
For all primes $p$, non-negative integers $m$, 
positive integers $N_1,N_2,\dots,N_k$, and $L\in\{1,2, \ldots, 
\max(N_1, \ldots, N_k)\}$, we have
\begin{equation} \label{eq:110a} 
\mathbf{B}_{\mathbf{N}}(m) \left(H_{Lmp^s}-H_{L\lfloor m/p\rfloor
p^{s+1}}\right) \in \frac{1}{p^s} \,\mathbb{Z}_p. 
\end{equation}
\end{lem}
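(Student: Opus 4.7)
The plan is to reduce the statement, via repeated application of the elementary congruence \eqref{eq:J}, to an instance of Lemma~\ref{lem:12a}. I write $m = pq + a$ with $0\le a<p$, so that $q = \lfloor m/p\rfloor$; the goal becomes to show that $p^s \mathbf{B}_{\mathbf{N}}(m)\bigl(H_{Lmp^s} - H_{Lqp^{s+1}}\bigr) \in \mathbb{Z}_p$.

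First I iterate \eqref{eq:J}: starting from $J = Lmp^s$, and using $\lfloor Lmp^s/p\rfloor = Lmp^{s-1}$ exactly, we get $pH_{Lmp^s} = H_{Lmp^{s-1}} + p\zeta_1$ for some $\zeta_1\in\mathbb{Z}_p$; applying \eqref{eq:J} $s$ times in total yields
$$
p^s H_{Lmp^s} \;=\; H_{Lm} \,+\, p\zeta_s + p^2\zeta_{s-1}+\dots+p^s\zeta_1 \;\equiv\; H_{Lm} \pmod{p\mathbb{Z}_p}.
$$
The analogous iteration starting from $J = Lqp^{s+1}$ gives $p^s H_{Lqp^{s+1}} \equiv H_{Lqp} \pmod{p\mathbb{Z}_p}$. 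Multiplying the difference by the integer $\mathbf{B}_{\mathbf{N}}(m)$, the target inclusion reduces to
$$
\mathbf{B}_{\mathbf{N}}(m)\bigl(H_{Lm} - H_{Lqp}\bigr)\in\mathbb{Z}_p.
$$

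For this second reduction, one further application of \eqref{eq:J} converts the problem into Lemma~\ref{lem:12a}. Since $Lm = Lqp + La$ with $0\le La<Lp$, we have $\lfloor Lm/p\rfloor = Lq + \lfloor La/p\rfloor$, and therefore
$$
p(H_{Lm} - H_{Lqp})\;\equiv\; H_{Lq + \lfloor La/p\rfloor} - H_{Lq} \pmod{p\mathbb{Z}_p}.
$$
Applying Lemma~\ref{lem:12a} with $j = q$ (so that $a + pq = m$), I conclude that $\mathbf{B}_{\mathbf{N}}(m)\bigl(H_{Lq + \lfloor La/p\rfloor} - H_{Lq}\bigr)\in p\mathbb{Z}_p$, and hence $p\,\mathbf{B}_{\mathbf{N}}(m)(H_{Lm} - H_{Lqp})\in p\mathbb{Z}_p$; dividing by $p$ gives the needed containment.

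The main obstacle is the invocation of Lemma~\ref{lem:12a}: this is the deepest of the auxiliary results, and, as the authors flag in Section~\ref{sec:method}, it is precisely the reduction step which is new here and absent from the work of Lian--Yau and Zudilin. The rest of the argument is careful bookkeeping with \eqref{eq:J}, exploiting the fact that each of its iterations introduces at most one extra denominator $p^{-1}$, which is exactly the gain that Lemma~\ref{lem:12a} provides for free. One should also check the edge cases $s=0$ or $q=0$, but in both situations the same chain of reductions (with some terms equal to $H_0=0$) carries through unchanged.
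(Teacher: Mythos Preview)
Your proof is correct and follows essentially the same approach as the paper: both reduce the claim to Lemma~\ref{lem:12a} by peeling off factors of $p$ from the harmonic numbers. The only cosmetic difference is that you iterate \eqref{eq:J} $s{+}1$ times, whereas the paper performs a single direct splitting of the sum $H_{Lmp^s}-H_{L\lfloor m/p\rfloor p^{s+1}}$ into the terms with denominator divisible by $p^{s+1}$ (which contribute exactly $p^{-s-1}(H_{Lj+\lfloor La/p\rfloor}-H_{Lj})$) and the remaining terms (which already lie in $p^{-s}\mathbb{Z}_p$); the endpoint and the invocation of Lemma~\ref{lem:12a} are identical.
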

It is clear that Lemmas~\ref{lem:strat3} and~\ref{lem:strat4}
imply~\eqref{eq:Yms}. 
This completes the outline of the proof of Theorem~\ref{thm:4}.

\begin{remark} \label{foot:1}
To prove the refinement announced at the end of the Introduction
that\break 
${\mathbf q}_{1,\mathbf{N}}(z)^{1/\mathbf{B}_{\mathbf{N}}(1)}\in
\mathbb{Z}[[z]]$, by Lemma~\ref{lem:4} we should show that
$\mathbf C(a+Kp) \in p\mathbf B_{\mathbf N}(1) \mathbb{Z}_p$ instead
of the weaker \eqref{eq:Cconga}, 
which means to show that
\begin{equation} \label{eq:Cequiv2}
\mathbf C(a+Kp)  \equiv \sum_{j=0}^K 
\mathbf B_{\mathbf N}(a+jp)\mathbf B_{\mathbf N}(K-j) (H_{L(K-j)}-H_{\lfloor
La/p\rfloor+Lj}) \mod p \mathbf B_{\mathbf N}(1)\mathbb{Z}_p
\end{equation}
instead of the weaker \eqref{eq:Cequiv}, that 
$\mathbf{B}_{\mathbf{N}}(a+pj)\big(H_{j+ \lfloor a/p\rfloor}
-H_{j}\big) \in p\mathbf B_{\mathbf N}(1)\mathbb{Z}_p$ instead of 
\eqref{eq:congrconj1a} (but this is trivial because $H_{j+ \lfloor a/p\rfloor}
-H_{j}=0$), and that 
\begin{equation} \label{eq:BH}
\mathbf{B}_{\mathbf{N}}(m) \left(H_{Lmp^s}-H_{L\lfloor m/p\rfloor
p^{s+1}}\right) \in 
\frac{\mathbf B_{\mathbf N}(1)}{p^s} \,\mathbb{Z}_p
\end{equation}
instead of the
weaker \eqref{eq:110a}. To establish \eqref{eq:Cequiv2} one would
apply the same type of argument as the one establishing
\eqref{eq:firstreduction}, however with Lemma~\ref{lem:multinomial/N!}
replaced by Lemma~\ref{lem:diviBB},
the latter lemma being proved in Section~\ref{sec:8}. To prove \eqref{eq:BH}, 
Lemma~\ref{lem:diviBB} must be used in \eqref{eq:vBH}.
\end{remark}

\section{Outline of the proof of Theorem~\ref{thm:2}}\label{sec:2}

This section is devoted to an outline of the proof of
Theorem~\ref{thm:2}, reducing it to 
Lemmas~\ref{lem:12}--\ref{lem:11}. These lemmas are subsequently
proved in Sections~\ref{sec:4}--\ref{sec:6}.

We follow the strategy that we used in Section~\ref{sec:7} to prove
Theorem~\ref{thm:4}; that is, 
by the consequence of Dwork's Lemma given in
Lemma~\ref{lem:4}, we want to prove that  
$$F_{\mathbf N}(z)G_{L,\mathbf N}(z^p)-pF_{\mathbf N}(z^p)G_{L,\mathbf
N}(z) \in p
\frac{M_{\mathbf{N}}}{\Th_L} z \mathbb{Z}_p[[z]].$$ 

The $(a+Kp)$-th Taylor coefficient of
$F_{\mathbf N}(z)G_{L,\mathbf N}(z^p)-pF_{\mathbf N}(z^p)G_{L,\mathbf
N}(z)$ is  
\begin{equation} \label{eq:C} 
C(a+Kp):=\sum_{j=0}^K B_{\mathbf N}(a+jp)B_{\mathbf N}(K-j) (H_{L(K-j)}-pH_{La+Ljp}),
\end{equation}
where $B_{\mathbf N}(m)=\prod_{j=1}^k B_{N_j}(m)$ with
$B_{N}(m):=\frac{(Nm)!}{m!^{N}}$  
(not to be confused with $\mathbf{B}_{\mathbf N}(m)$ and
$\mathbf{B}_{N}(m)$).   
In view of Lemma~\ref{lem:4}, proving Theorem~\ref{thm:2} is equivalent to
proving that 
\begin{equation} \label{eq:Ccong} 
C(a+Kp) \in p \frac {M_{\mathbf{N}}} {\Th_L}\mathbb{Z}_p
\end{equation}
for all primes $p$ and non-negative integers $a$ and $K$ with $0\le a<p$.

The following simple lemma will be frequently used in the sequel.

\begin{lem} \label{lem:multinomial/N!}
For all integers $m\ge 1$ and $N\ge 1$, we have 
$$
B_N(m) \in N! \,\mathbb{Z}.
$$
\end{lem}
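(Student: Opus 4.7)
The plan is to give a direct combinatorial proof, since $B_N(m)$ has a transparent enumerative meaning. Writing
\[
B_N(m)=\frac{(Nm)!}{m!^{\,N}}=\binom{Nm}{m,\,m,\,\ldots,\,m},
\]
I recognise $B_N(m)$ as the multinomial coefficient counting the number of ways to partition the set $\{1,2,\dots,Nm\}$ into an \emph{ordered} tuple $(S_1,S_2,\dots,S_N)$ of pairwise disjoint subsets, each of cardinality $m$. Every unordered set partition of $\{1,2,\dots,Nm\}$ into $N$ blocks of size $m$ corresponds to exactly $N!$ such ordered tuples, obtained by permuting the blocks. Consequently
\[
\frac{B_N(m)}{N!}=\frac{(Nm)!}{N!\,m!^{\,N}}
\]
is exactly the number of set partitions of an $Nm$-element set into $N$ blocks of equal size $m$, hence a non-negative integer. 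This is the conclusion $B_N(m)\in N!\,\mathbb{Z}$.

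If one prefers an arithmetic proof, the same statement can be established prime by prime via Legendre's formula $v_p(n!)=(n-s_p(n))/(p-1)$, where $s_p(n)$ denotes the sum of the base-$p$ digits of $n$. After simplification, the desired inequality $v_p(B_N(m))\ge v_p(N!)$ reduces to
\[
s_p(N)+N\,s_p(m)\ge s_p(Nm)+N,
\]
which follows from the sub-additivity of carries when multiplying $N$ by $m$ in base $p$; equivalently, it is an $N$-fold iteration of Kummer's theorem on the $p$-adic valuation of binomial coefficients. I would prefer the combinatorial argument, as it is essentially a one-line observation and avoids case analysis.

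The only potential obstacle is purely notational: making sure the convention $B_N(m)=\prod_j B_{N_j}(m)$ adopted on the first line of Section~\ref{sec:2} (for the vector-indexed quantity) is not confused with the scalar $B_N(m)=(Nm)!/m!^{\,N}$ appearing in the lemma's statement. Once this is disambiguated, the proof of the lemma itself is immediate from the combinatorial interpretation.
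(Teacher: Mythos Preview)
Your combinatorial argument is correct and complete: since $m\ge 1$, the $N$ blocks of size $m$ are nonempty and pairwise disjoint, hence pairwise distinct as sets, so the symmetric group $S_N$ acts freely on ordered tuples $(S_1,\dots,S_N)$; the quotient $B_N(m)/N!$ therefore counts unordered set partitions of $\{1,\dots,Nm\}$ into $N$ blocks of size $m$, which is manifestly a non-negative integer.

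This is a genuinely different route from the paper's. The paper argues by induction on $N$: setting $U_m(N)=(Nm)!/(m!^N N!)$, one checks the recurrence
\[
U_m(N+1)=\binom{Nm+m-1}{m-1}\,U_m(N),
\]
and since $U_m(1)=1$ and binomial coefficients are integers, the result follows. Your approach is more conceptual, identifying $B_N(m)/N!$ directly with a well-known combinatorial count; the paper's approach is purely algebraic and self-contained, requiring only the integrality of binomial coefficients. Both are short; yours perhaps explains \emph{why} the result holds, while the paper's recurrence could in principle be iterated to extract further divisibility information.

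One minor caveat: your arithmetic alternative via Legendre's formula reduces correctly to the inequality $s_p(N)+N\,s_p(m)\ge s_p(Nm)+N$, but the justification you offer (``sub-additivity of carries when multiplying $N$ by $m$'', or an ``$N$-fold iteration of Kummer's theorem'') is not quite a proof as stated---it would need a couple more lines to make precise. Since you explicitly prefer the combinatorial argument and present that one in full, this does not affect the validity of your submission.
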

\begin{proof} Set $U_m(N)=\frac{(Nm)!}{m!^N N!}$. For any $m, N\ge 1$,
we have the trivial relation  
$$
U_m(N+1) = \binom{Nm+m-1}{m-1} U_m(N).
$$
Therefore, since $U_m(1)=1$, the result follows by induction on $N$.
\end{proof}
We deduce in particular that $B_{\mathbf N}(m)\in M_{\mathbf{N}}\mathbb{Z}$ for
any $m\ge 1$. 

\medskip

Using this together with \eqref{eq:J} 
specialised to $J=La+Ljp$, 
we infer
\begin{equation}\label{eq:firstreduction}
C(a+Kp) \equiv \sum_{j=0}^K B_{\mathbf N}(a+jp)B_{\mathbf N}(K-j) (H_{L(K-j)}-H_{\lfloor
La/p\rfloor+Lj}) \mod pM_{\mathbf{N}}\mathbb{Z}_p.
\end{equation}
Indeed, if $K\ge 1$ or $a\ge1$, this is because 
$a+jp$ and $K-j$ cannot be simultaneously zero and therefore at least one of 
$B_{\mathbf N}(a+jp)$ or $B_{\mathbf N}(K-j)$ is divisible by $M_{\mathbf{N}}$ by 
Lemma~\ref{lem:multinomial/N!}. In the 
remaining case $K=a=j=0$, we note that the difference of harmonic numbers
in~\eqref{eq:firstreduction} 
is equal to $0$, and therefore the congruence~\eqref{eq:firstreduction} 
holds trivially because $C(0)=0$.
\medskip  

The analogue of Lemma~\ref{lem:12a} in the present context, which allows
us to get rid of the floor function $\fl{La/p}$ and rearrange the sum
over $j$, is the following lemma. The proof can be found in
Section~\ref{sec:4}.

\begin{lem} \label{lem:12}
For any prime $p$, non-negative integers $a$ and $j$ with $0\le a<p$, 
positive integers $N_1,N_2,\dots,N_k$, and $L\in\{1,2, \ldots, 
\max(N_1, \ldots, N_k)\}$, we have 
\begin{equation} \label{eq:congrconj1}
B_{\mathbf N}(a+pj)\left(H_{Lj+\lfloor La/p\rfloor} - H_{Lj}\right) \in p\frac
{M_{\mathbf{N}}} {\Th_L}\mathbb{Z}_p.
\end{equation}
\end{lem}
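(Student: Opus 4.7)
The plan is to first dispatch the trivial case and then perform a careful prime-by-prime $p$-adic analysis. Let $\ell := \lfloor La/p\rfloor$. If $\ell = 0$, then $H_{Lj+\ell} - H_{Lj}$ vanishes and the claim is immediate, so I assume $\ell \ge 1$. This forces $La \ge p$, hence $a \ge 1$, so $m := a + pj \ge 1$ and Lemma~\ref{lem:multinomial/N!} yields $B_{\mathbf N}(m)/M_{\mathbf N} \in \mathbb{Z}$. Writing $H_{Lj+\ell} - H_{Lj} = \sum_{i=1}^{\ell} \frac{1}{Lj+i}$, the statement is equivalent to
\begin{equation*}
\Theta_L \cdot \frac{B_{\mathbf N}(m)}{M_{\mathbf N}} \cdot \sum_{i=1}^{\ell} \frac{1}{Lj+i} \in p\,\mathbb{Z}_p.
\end{equation*}

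I would next fix a prime $p$ and estimate $p$-adic valuations. When $p > L$ we have $v_p(\Theta_L) = 0$, and since $\ell \le L - 1 < p$ the $\ell$ consecutive integers $Lj+1, \ldots, Lj+\ell$ contain at most one multiple of $p$, say $Lj+i_0$ with $e := v_p(Lj+i_0)$ (set $e = 0$ if no such $i_0$ exists). Then $v_p\!\left(\sum_{i=1}^{\ell} \frac{1}{Lj+i}\right) \ge -e$, and it suffices to show $v_p\!\left(B_{\mathbf N}(m)/M_{\mathbf N}\right) \ge 1 + e$. Choosing an index $s_0$ with $N_{s_0} \ge L$ (which exists by hypothesis), this follows from Kummer's theorem applied to the multinomial $B_{N_{s_0}}(m)/N_{s_0}! = \binom{N_{s_0}m}{m,\dots,m}/N_{s_0}!$: the condition $p \mid Lj + i_0$ with $1 \le i_0 \le \ell < L \le N_{s_0}$ together with $a \ge 1$ forces at least $1+e$ extra carries in the base-$p$ addition of $N_{s_0}$ copies of $m$ beyond the $v_p(N_{s_0}!)$ carries arising from the digits of $j$.

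The case $p \le L$ is subtler, because $v_p(\Theta_L)$ may be as large as $\lfloor \log_p L\rfloor$ and several of the $Lj+i$ may be divisible by $p$. To handle it, I would bound $v_p(\Theta_L) \le v_p(\operatorname{lcm}(1,\ldots,L))$ and combine this with a refined Kummer-style estimate on $v_p(B_{\mathbf N}(m))$ that exploits the digit structure $m = a + pj$ with $1 \le a < p$. Rather than bounding term by term, it is natural to group the summands $\frac{1}{Lj+i}$ by their $p$-adic valuation and look for partial cancellations modulo higher powers of $p$; the monotonicity properties of the Landau-type step function $\Delta$ discussed at the end of Section~\ref{sec:relwork} and in Section~\ref{sec:8} should drive the estimate.

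The principal obstacle is this last step: balancing $v_p(\Theta_L)$ against $\max_i v_p(Lj+i)$ while proving a sufficiently sharp Kummer-type lower bound on $v_p(B_{\mathbf N}(m))$. This is precisely the analogue, in the factorial-quotient setting $B_N(m)=(Nm)!/m!^N$, of the central estimate behind the more general Lemma~\ref{lem:12a}, so I would model the argument on the forthcoming Section~\ref{sec:9}, adapting it to track the extra factor $M_{\mathbf N}/\Theta_L$ and using $L \le \max_s N_s$ crucially in order to produce the required carries.
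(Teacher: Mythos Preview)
Your proposal is not a proof but an outline with two genuine gaps.

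\medskip
\textbf{The Kummer carry claim is unproved.} For $p>L$ you assert that adding $N_{s_0}$ copies of $m=a+pj$ in base $p$ produces at least $v_p(N_{s_0}!)+1+e$ carries. This is strictly stronger than the ``weak version'' $v_p\big(B_{\mathbf N}(a+pj)\big)\ge 1+e$ (which is what Section~\ref{sec:9} and the first part of the paper's proof actually establish), because you are asking for $v_p\big(B_{N_{s_0}}(a+pj)/N_{s_0}!\big)\ge 1+e$. The weak version only gives $v_p\big(B_{N_{s_0}}(a+pj)\big)\ge 1+e$, i.e.\ it is off by $v_p(N_{s_0}!)$ in exactly the range $L<p\le N_{s_0}$ where that quantity is positive. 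So you cannot simply invoke the first part of Section~\ref{sec:4} or Lemma~\ref{lem:12a}; this step requires its own argument of comparable depth.

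\medskip
\textbf{The case $p\le L$ is not done.} You correctly identify it as the principal obstacle and then defer to ``model[ling] the argument on the forthcoming Section~\ref{sec:9}''. But Section~\ref{sec:9} proves only the weak Lemma~\ref{lem:12a}; the extra factor $M_{\mathbf N}/\Theta_L$ is precisely what makes Lemma~\ref{lem:12} harder, and you give no mechanism for producing it.

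\medskip
\textbf{How the paper actually proceeds.} The paper's split is different from yours: it first proves the weak version (no $M_{\mathbf N}/\Theta_L$), then splits into $j=0$ versus $j>0$, and within each of these into $p>N_k$ versus $p\le N_k$ (not $p>L$). The easy case $p>N_k$ genuinely reduces to the weak version because then $v_p(M_{\mathbf N})=v_p(\Theta_L)=0$. For $p\le N_k$ the paper writes down a long chain of explicit Legendre-formula estimates \eqref{eq:ungl1}--\eqref{eq:ungl7}, culminating in a bound that is off by exactly~$1$ from the target; the final $+1$ is then squeezed out by assuming equality everywhere and running through four concrete cases (Cases~1--4) to reach a contradiction. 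Your grouping-by-valuation idea and appeal to the Landau step function~$\Delta$ do not appear in the paper's proof of this lemma; the argument is purely a Legendre/digit-count analysis specific to $B_N(m)=(Nm)!/m!^{N}$.
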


We now do the same rearrangements as those after Lemma~\ref{lem:12a}
to conclude that
\begin{equation*}
C(a+Kp) \equiv 
-
\sum _{s=0} ^{r}
\sum _{m=0} ^{p^{r+1-s}-1}Y_{m,s}\mod p\frac
{M_{\mathbf{N}}} {\Th_L}\mathbb{Z}_p,
\end{equation*}
where $r$ is such that $K<p^r$, and 
\begin{equation*} 
Y_{m,s}:=\big(H_{Lmp^s}-H_{L\fl{m/p}p^{s+1}}\big)S(a,K,s,p,m),
\end{equation*}
the expression $S(a,K,s,p,m)$ being defined by
$$
S(a,K,s,p,m):=\sum _{j=mp^s} ^{(m+1)p^s-1}\big(B_{\mathbf N}(a+jp)
B_{\mathbf N}(K-j)-B_{\mathbf N}(j)B_{\mathbf N}(a+(K-j)p)
\big).
$$
In this expression for $S(a,K,s,p,m)$, it is assumed that $B_{\mathbf
N}(n)=0$ for negative integers~$n$. 

\medskip

If we prove that 
\begin{equation}
\label{eq:yms}
Y_{m,s}\in p\frac {M_{\mathbf{N}}} {\Th_L}\mathbb Z_p ,
\end{equation}
then 
$
C(a+Kp) \in  p\frac
{M_{\mathbf{N}}} {\Th_L}\mathbb{Z}_p, 
$
as desired. 

Now, the last assertion follows from the following two lemmas.
Lemma~\ref{lem:10} is the special case 
of Lemma~\ref{lem:strat3} where 
the vector $\mathbf N$ is specialised in the way described at the
beginning of Section~\ref{sec:1.3} (in which case the quantity
$\mathbf{B}_{\mathbf N}(m)$ of Lemma~\ref{lem:strat3} reduces to
$B_{\mathbf N}(m)$, and hence $\mathbf S(a,K,s,p,m)$ to
$S(a,K,s,p,m)$).
On the other hand, Lemma~\ref{lem:11} is the analogue of
Lemma~\ref{lem:strat4}. Its proof can be found in
Section~\ref{sec:6}.

\begin{lem} \label{lem:10}
For all primes $p$ and
non-negative integers $a,m,s,K$ with $0\le a<p$, we have 
\begin{equation}
\label{eq:congruenceS}
S(a,K,s,p,m)\in p^{s+1} B_{\mathbf N}(m) \mathbb{Z}_p.
\end{equation}
\end{lem}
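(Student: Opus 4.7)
The cleanest plan is to note that Lemma~\ref{lem:10} is, by construction, the specialisation of Lemma~\ref{lem:strat3} in which the multiset $\mathbf{N}$ is refined as described at the start of Section~\ref{sec:1.3} (so that each $\mathbf{B}_{N_j}$ collapses to $B_{N_j}=(N_jm)!/m!^{N_j}$, and hence $\mathbf{S}(a,K,s,p,m)$ to $S(a,K,s,p,m)$). Once Lemma~\ref{lem:strat3} is in hand, Lemma~\ref{lem:10} requires no additional work: the hypotheses and conclusion carry over verbatim, since $B_{\mathbf{N}}(m)$ is just $\mathbf{B}_{\mathbf{N}}(m)$ in that specialisation. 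So my proposal is to first prove Lemma~\ref{lem:strat3} and then quote it here.

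To prove Lemma~\ref{lem:strat3} itself, the strategy is to feed the summand into Proposition~\ref{prop:dworkcongruence} of Section~\ref{sec:1}. The summand
$$B_{\mathbf{N}}(a+jp)B_{\mathbf{N}}(K-j) - B_{\mathbf{N}}(j)B_{\mathbf{N}}(a+(K-j)p)$$
is a Dwork-type Frobenius antisymmetrisation, and the summation range $j\in[mp^{s},(m+1)p^{s})$ is a complete $p^{s}$-window starting at $mp^{s}$, which is exactly the configuration Dwork's formal-congruences machinery is designed for. The machinery then yields divisibility of the sum by $p^{s+1}$ times the base value $B_{\mathbf{N}}(m)$, provided the input sequence $B_{\mathbf{N}}$ satisfies the required multiplicative congruences modulo $p$ (a classical fact for ratios of factorials, stemming from Gauss's generalisation of Wilson's theorem for $(Np)!/p!^{N}$ modulo $p$). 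Concretely, I would verify that $B_{\mathbf{N}}(a+bp)\equiv B_{\mathbf{N}}(a)B_{\mathbf{N}}(b)\pmod{p}$ for $0\le a<p$, and feed this into the proposition.

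The main obstacle I anticipate is obtaining the factor $B_{\mathbf{N}}(m)$ in the conclusion rather than merely the factor $p^{s+1}$. This extra factor reflects the $p$-adic divisibility $B_{\mathbf{N}}(m)\mid B_{\mathbf{N}}(mp^{s}+j')$ for $0\le j'<p^{s}$, and will have to be extracted by a careful bookkeeping of $p$-adic valuations via Legendre's formula applied to each factorial quotient. Ultimately this rests on the monotonicity of the Landau function $\Delta$ alluded to in Section~\ref{sec:relwork} (and, in the general Lemma~\ref{lem:strat3}, on the divisibility $\mathbf{B}_{\mathbf{N}}(1)\mid \mathbf{B}_{\mathbf{N}}(m)$ provided by Lemma~\ref{lem:diviBB}). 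In the present specialisation, this monotonicity is automatic since each $B_{N_j}(m)=(N_jm)!/m!^{N_j}$ is a multinomial coefficient, and the required divisibility is a consequence of the submultiplicative behaviour of $p$-adic valuations of multinomials in base~$p$.
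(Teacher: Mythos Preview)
Your reduction of Lemma~\ref{lem:10} to Lemma~\ref{lem:strat3} is exactly the paper's argument, and your plan to prove Lemma~\ref{lem:strat3} via Proposition~\ref{prop:dworkcongruence} with $A=g=\mathbf{B}_{\mathbf{N}}$ is also precisely the paper's route.

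Two small corrections to your sketch of the Lemma~\ref{lem:strat3} verification. First, condition~(iii) of Proposition~\ref{prop:dworkcongruence} is considerably stronger than the mod-$p$ multiplicativity $B_{\mathbf{N}}(a+bp)\equiv B_{\mathbf{N}}(a)B_{\mathbf{N}}(b)\pmod p$ you propose to check: it demands control modulo $p^{s+1}$ of the ratio $A(v+up+np^{s+1})/A(v+up)-A(u+np^s)/A(u)$ for \emph{all} $s$, with the extra weight $g(n)/g(v+up)$. The paper carries this out in three steps (Section~\ref{sec:10}) using the $p$-adic gamma function (Lemma~\ref{lem:gammap}) rather than Wilson-type identities. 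Second, the auxiliary divisibility you correctly identify, $\mathbf{B}_{\mathbf{N}}(m)\mid_p \mathbf{B}_{\mathbf{N}}(w+mp^r)$, is supplied by Lemma~\ref{lem:ultime}, not by Lemma~\ref{lem:diviBB}; the paper explicitly notes that Lemma~\ref{lem:diviBB} is used nowhere in the proofs of the theorems. Your intuition that the monotonicity of the Landau function $\Delta$ underlies this divisibility is correct, however, since Lemma~\ref{lem:ultime} is itself proved via Lemma~\ref{lem:10a}(ii).
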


\begin{lem} \label{lem:11}
For all primes $p$, non-negative integers $m$, 
positive integers $N_1,N_2,\dots,N_k$, and $L\in\{1,2, \ldots, 
\max(N_1, \ldots, N_k)\}$, we have
\begin{equation} 
\label{eq:110}
B_{\mathbf N}(m)\big(H_{Lmp^s}-H_{L\fl{m/p}p^{s+1}}\big)\in \frac {M_{\mathbf{N}}}
{p^s\Th_L}\mathbb Z_p\ .
\end{equation}
\end{lem}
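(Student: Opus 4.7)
The plan is to reduce to the case $s=0$ via a direct splitting of the partial harmonic sum, and then to treat that case by an algebraic identity isolating a manageable $rH_L$-contribution, with the remaining correction controlled by a refinement of the $p$-adic argument behind Lemma~\ref{lem:strat4}.

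First, I would write $m=np+r$ with $0\le r<p$, so that $Lmp^s=Lnp^{s+1}+Lrp^s$, and split the sum $\sum_{i=1}^{Lrp^s}\frac{1}{Lnp^{s+1}+i}$ according to whether $p^s\mid i$. The $p^s\mid i$ part factors as $\frac{1}{p^s}(H_{Lm}-H_{Lnp})$, while the complementary part
\[
T_s:=\sum_{\substack{i=1\\ p^s\nmid i}}^{Lrp^s}\frac{1}{Lnp^{s+1}+i}
\]
satisfies $v_p(Lnp^{s+1}+i)=v_p(i)\le s-1$ term by term, hence $T_s\in p^{-(s-1)}\mathbb Z_p$. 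Combined with Lemma~\ref{lem:multinomial/N!} (giving $B_{\mathbf N}(m)\in M_{\mathbf N}\mathbb Z$ for $m\ge 1$; the case $m=0$ is trivial), this yields $B_{\mathbf N}(m)T_s\in\frac{M_{\mathbf N}}{p^{s-1}}\mathbb Z_p\subseteq\frac{M_{\mathbf N}}{p^s\Theta_L}\mathbb Z_p$, since $v_p(\Theta_L)\ge 0$. The problem thereby reduces to the $s=0$ case, namely
\[
B_{\mathbf N}(m)\bigl(H_{Lm}-H_{Lnp}\bigr)\in\frac{M_{\mathbf N}}{\Theta_L}\mathbb Z_p.
\]

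Next, regrouping $\sum_{i=1}^{Lr}\frac{1}{Lnp+i}$ into blocks of length $L$ via $i=aL+b$ with $0\le a<r$ and $1\le b\le L$, and adding and subtracting $\sum_b 1/b$ within each block, I obtain the identity
\[
H_{Lm}-H_{Lnp}\;=\;rH_L\;-\;L\sum_{a=0}^{r-1}(np+a)\sum_{b=1}^{L}\frac{1}{b\bigl(L(np+a)+b\bigr)}.
\]
Multiplying through by $B_{\mathbf N}(m)$, the first piece $rB_{\mathbf N}(m)H_L$ lies in $\frac{M_{\mathbf N}}{\Theta_L}\mathbb Z$: this is because $rB_{\mathbf N}(m)/M_{\mathbf N}\in\mathbb Z$ by Lemma~\ref{lem:multinomial/N!}, while $\Theta_L H_L\in\mathbb Z$ by the very definition of $\Theta_L$.

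The main obstacle is to show that the remaining correction sum, multiplied by $B_{\mathbf N}(m)$, also lies in $\frac{M_{\mathbf N}}{\Theta_L}\mathbb Z_p$. A term-by-term bound fails: for certain pairs $(a,b)$, $v_p\bigl(b(L(np+a)+b)\bigr)$ can exceed $v_p(\Theta_L)+v_p(L(np+a))$, so individual summands overshoot the target, and only the collective cancellation of the inner sum (mirroring the cancellation that produces $\Theta_L$ as denominator of $H_L$) restores the bound. I would handle this by following the structure of the proof of Lemma~\ref{lem:strat4} in Section~\ref{sec:6a}, namely by invoking Proposition~\ref{prop:dworkcongruence} and the weakly increasing Landau function $\Delta$ from Section~\ref{sec:8}, but refining each valuation estimate so as to exploit Lemma~\ref{lem:multinomial/N!} wherever a factor $M_{\mathbf N}$ is available and to invoke $\Theta_L H_L\in\mathbb Z$ at the single step producing an $H_L$-term. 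The net effect is to upgrade the $\frac{1}{p^s}\mathbb Z_p$ conclusion of Lemma~\ref{lem:strat4} into the $\frac{M_{\mathbf N}}{p^s\Theta_L}\mathbb Z_p$ conclusion required by Lemma~\ref{lem:11}.
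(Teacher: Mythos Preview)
Your reduction to $s=0$ by splitting at level $p^s$ is correct, and so is the algebraic identity isolating $rH_L$. But the treatment of what you call the ``main obstacle'' is a genuine gap: you have only asserted that it can be handled by ``following the structure of the proof of Lemma~\ref{lem:strat4}'' via Proposition~\ref{prop:dworkcongruence} and the Landau function $\Delta$. These are the wrong tools. The proof of Lemma~\ref{lem:strat4} in Section~\ref{sec:6a} invokes neither of them; its sole nontrivial input is Lemma~\ref{lem:12a}. The analogue needed here is Lemma~\ref{lem:12}, and its proof in Section~\ref{sec:4} is considerably longer than that of Lemma~\ref{lem:12a}: obtaining the extra factor $M_{\mathbf N}/\Theta_L$ is not a matter of inserting Lemma~\ref{lem:multinomial/N!} and $\Theta_L H_L\in\mathbb Z$ at opportune moments, but requires a delicate case analysis of $v_p\bigl(B_{\mathbf N}(a+pj)\bigr)$ via Legendre's formula. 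Your correction sum, if pushed through, would need exactly this estimate, so the $rH_L$ identity is a detour that repackages rather than avoids the difficulty.

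The paper's proof is in fact shorter than your reduction. With $m=a+pj$, $0<a<p$, it splits at level $p^{s+1}$ (not $p^s$):
\[
H_{Lmp^s}-H_{L\lfloor m/p\rfloor p^{s+1}}
=\frac{1}{p^{s+1}}\bigl(H_{Lj+\lfloor La/p\rfloor}-H_{Lj}\bigr)
+\sum_{\substack{\ep=1\\p^{s+1}\nmid\ep}}^{Lap^s}\frac{1}{Ljp^{s+1}+\ep}.
\]
The tail lies in $p^{-s}\mathbb Z_p$; multiplying by $B_{\mathbf N}(m)\in M_{\mathbf N}\mathbb Z$ (Lemma~\ref{lem:multinomial/N!}) lands it in $\frac{M_{\mathbf N}}{p^s}\mathbb Z_p\subseteq\frac{M_{\mathbf N}}{p^s\Theta_L}\mathbb Z_p$. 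The main term, multiplied by $B_{\mathbf N}(m)$, lies in $\frac{1}{p^{s+1}}\cdot p\frac{M_{\mathbf N}}{\Theta_L}\mathbb Z_p=\frac{M_{\mathbf N}}{p^s\Theta_L}\mathbb Z_p$ by Lemma~\ref{lem:12}, and the proof is complete. Had you split once more at level $p$ after your $s=0$ reduction, you would have arrived at the same invocation of Lemma~\ref{lem:12}; the hard work is entirely inside that lemma.
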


It is clear that~\eqref{eq:congruenceS} and~\eqref{eq:110}
imply~\eqref{eq:yms}. 
This completes the outline of the proof of Theorem~\ref{thm:2}.

\section{Dwork's theory of formal congruences} \label{sec:1}

In this section, we review those aspects of Dwork's theory on
which the arguments of the proofs of Theorems~\ref{thm:4} and
\ref{thm:2} (see Sections~\ref{sec:7} and \ref{sec:2}) are based.

\medskip
First, consider a formal power series $S(z)\in \mathbb{Q}[[z]]$ and suppose 
that we want to prove that $S(z)\in\mathbb{Z}[[z]]$. 

\begin{lem} \label{lem:1} 
Let $S(z)$ be a power series in $\mathbb{Q}[[z]]$. If 
$S(z)\in\mathbb{Z}_p[[z]]$ for any prime number $p$, then
$S(z)\in\mathbb{Z}[[z]]$. 
\end{lem}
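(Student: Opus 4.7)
The plan is to reduce the claim to its coefficient-wise version and then invoke the classical fact that $\mathbb{Z} = \mathbb{Q} \cap \bigcap_{p} \mathbb{Z}_p$. Writing $S(z) = \sum_{n \ge 0} s_n z^n$ with $s_n \in \mathbb{Q}$, the assumption $S(z) \in \mathbb{Z}_p[[z]]$ translates, coefficient by coefficient, into $s_n \in \mathbb{Z}_p$; conversely, $S(z) \in \mathbb{Z}[[z]]$ is equivalent to $s_n \in \mathbb{Z}$ for every $n$. Thus it suffices to show: if a rational number belongs to $\mathbb{Z}_p$ for every prime $p$, then it is an integer.

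For this, I would write such an $s_n$ as a reduced fraction $a/b$ with $b \ge 1$ and $\gcd(a,b) = 1$. The $p$-adic valuation is $v_p(s_n) = v_p(a) - v_p(b)$, and the coprimality condition forces $v_p(s_n) \ge 0$ to be equivalent to $p \nmid b$. Hence the hypothesis $s_n \in \mathbb{Z}_p$ for every prime $p$ implies that no prime divides $b$, so $b = 1$ and $s_n = a \in \mathbb{Z}$.

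There is essentially no obstacle: the only small point is to phrase carefully that the $p$-adic condition on the power series is equivalent to the $p$-adic condition on each coefficient (which is immediate from the definition of $\mathbb{Z}_p[[z]]$ as the set of series all of whose coefficients lie in $\mathbb{Z}_p$). Once this coefficient-wise reduction is made, the argument is a one-line application of unique factorisation in $\mathbb{Z}$.
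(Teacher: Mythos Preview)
Your argument is correct and follows exactly the approach the paper takes: reduce to coefficients and use that a rational number lying in $\mathbb{Z}_p$ for every prime $p$ is an integer. The paper states this fact without proof, whereas you spell out the elementary valuation argument, but the route is the same.
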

This is a consequence of the fact that, given $x\in \mathbb{Q}$, we have  
$x\in \mathbb{Z}$ if and only if 
$x\in \mathbb{Z}_p$ for all prime numbers $p$.
Hence we can work in $\mathbb{Q}_p$ for any fixed prime $p$.

\begin{lem}[\sc ``Dwork's Lemma''] \label{lem:2} 
Let $S(z)\in 1+z\mathbb{Q}_p[[z]]$. Then, 
we have $S(z)\in 1+z\mathbb{Z}_p[[z]]$ if and only if 
$$
\frac{S(z^p)}{S(z)^p }
\in 1+p z\mathbb{Z}_p[[z]].
$$
\end{lem}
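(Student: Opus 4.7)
The plan is to handle the two implications separately. The ``only if'' direction is the easier one: assuming $S(z)=1+\sum_{n\ge1}a_nz^n$ with $a_n\in\mathbb Z_p$, one has the classical ``Frobenius'' congruence $S(z)^p \equiv S(z^p)\pmod{p\mathbb{Z}_p[[z]]}$, which follows from the multinomial theorem together with the fact that the multinomial coefficient $\binom{p}{k_0,k_1,\dots}$ is divisible by $p$ whenever no $k_i$ equals $p$, and from Fermat's little theorem in the form $a_n^p\equiv a_n\pmod{p\mathbb Z_p}$. Since $S(z)^p$ is invertible in $\mathbb Z_p[[z]]$, the quotient $S(z^p)/S(z)^p$ then automatically lies in $1+pz\mathbb{Z}_p[[z]]$.

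For the converse, I write $S(z)=1+\sum_{n\ge1}a_nz^n$ with $a_n\in\mathbb{Q}_p$ and set $T(z):=S(z^p)/S(z)^p=1+\sum_{k\ge1}t_kz^k$, with $t_k\in p\mathbb{Z}_p$ by hypothesis; the goal is to prove $a_n\in\mathbb{Z}_p$ by induction on $n\ge1$. The inductive step starts from the identity $S(z^p)=S(z)^p\,T(z)$, whose coefficient of $z^n$ reads
\[
a_{n/p}\cdot[p\mid n] \;=\; c_n + \sum_{k=1}^n c_{n-k}\,t_k,
\]
where $c_m$ denotes the coefficient of $z^m$ in $S(z)^p$, $[\,\cdot\,]$ is the Iverson bracket, and $a_{n/p}$ is understood as $0$ when $p\nmid n$.

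The crux is to analyse $c_n$ by expanding $\bigl(1+\sum_{j\ge1}a_jz^j\bigr)^p$ via the multinomial theorem. The only monomials whose multinomial coefficient is \emph{not} divisible by $p$ are those in which some exponent equals $p$; inspecting which of them can contribute to the coefficient of $z^n$ yields the decomposition
\[
c_n \;=\; p\,a_n \;+\; p\,Q_n(a_1,\dots,a_{n-1}) \;+\; [p\mid n]\cdot a_{n/p}^p,
\]
with $Q_n$ a polynomial having integer coefficients. By the induction hypothesis $a_1,\dots,a_{n-1}\in\mathbb Z_p$, so every $c_m$ with $m<n$ lies in $\mathbb Z_p$; combined with $t_k\in p\mathbb Z_p$, this forces $\sum_{k=1}^n c_{n-k}\,t_k\in p\mathbb Z_p$.

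Substituting the decomposition into the coefficient identity produces
\[
p\,a_n \;=\; [p\mid n]\bigl(a_{n/p} - a_{n/p}^p\bigr) \;-\; p\,Q_n(a_1,\dots,a_{n-1}) \;-\; \sum_{k=1}^n c_{n-k}\,t_k,
\]
whose right-hand side lies in $p\mathbb Z_p$: the terms $pQ_n$ and $\sum c_{n-k}t_k$ by the discussion above, and the term $[p\mid n](a_{n/p}-a_{n/p}^p)$ because $a_{n/p}\in\mathbb Z_p$ by induction and Fermat's little theorem then gives $a_{n/p}^p\equiv a_{n/p}\pmod{p\mathbb Z_p}$. Hence $a_n\in\mathbb Z_p$, closing the induction. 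The main obstacle -- and the only place where the primality of $p$ is truly essential -- is the multinomial divisibility statement, reinforced by Fermat's congruence when $p\mid n$; the remaining steps are bookkeeping.
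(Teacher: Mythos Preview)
Your proof is correct. The paper does not give its own argument here; it simply refers to Lang's \emph{Cyclotomic Fields II}, Ch.~14, p.~76, noting that the proof ``is neither difficult nor long.'' The direct argument you wrote---Frobenius congruence via the multinomial theorem and Fermat's little theorem for the forward direction, and coefficient-by-coefficient induction using the decomposition $c_n = p a_n + [p\mid n]\,a_{n/p}^p + pQ_n(a_1,\dots,a_{n-1})$ for the converse---is precisely the standard proof one finds in that reference, so there is no substantive difference to discuss.
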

\begin{proof} The proof is neither difficult nor long and can 
for example be found in the book of
Lang~\cite[Ch.~14, p.~76]{lang}. Lang attributes this lemma to 
Dieudonn\'e and Dwork.
\end{proof}

We now suppose that $S(z)=\exp(T(z)/\tau)$ for some $T(z)\in
z\mathbb{Q}[[z]]$ and some integer $\tau\ge 1$.
Dwork's Lemma implies the following result: 
$\tau$ being any fixed positive integer, 
we have $\exp\big(T(z)/ \tau \big) \in 1+z\mathbb{Z}_p[[z]]$ if 
and only if $T(z^p)-pT(z)\in p \tau z\mathbb{Z}_p[[z]]$.
(See~\cite[Corollary~6.7]{lianyau} for a proof.)
Since we will be interested in the case when $T(z)=g(z)/f(z)$ with
$f(z)\in 1+z\mathbb{Z}[[z]]$ and $g(z)\in z\mathbb{Q}[[z]]$, 
we state this result as follows.

\begin{lem}\label{lem:4} Given two formal power series 
$f(z)\in 1+z\mathbb{Z}[[z]]$ and $g(z)\in z\mathbb{Q}[[z]]$ and an
integer $\tau\ge 1$, we have 
$\exp\big(g(z)/(\tau f(z))\big) \in 1+z\mathbb{Z}_p[[z]]$ if and only if 
\begin{equation}\label{eq:uv=pvu}
f(z)g(z^p)-p\,f(z^p)g(z)\in p\tau z\mathbb{Z}_p[[z]].
\end{equation}
\end{lem}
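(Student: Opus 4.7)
The plan is to apply the corollary stated in the paragraph preceding Lemma~\ref{lem:4} (the consequence of Dwork's Lemma proved as \cite[Corollary~6.7]{lianyau}) to the specific choice $T(z):=g(z)/f(z)$, and then clear the denominator. First, since $f(z)\in 1+z\mathbb Z[[z]]$ has constant term $1$, it is invertible in $\mathbb Z[[z]]$, and its inverse lies in $1+z\mathbb Z[[z]]\subset 1+z\mathbb Z_p[[z]]$. Consequently $T(z)=g(z)/f(z)\in z\mathbb Q[[z]]$, which is exactly the setting in which the cited corollary applies. That corollary then gives the equivalence
\[
\exp\!\bigl(g(z)/(\tau f(z))\bigr)\in 1+z\mathbb Z_p[[z]]\;\;\Longleftrightarrow\;\; T(z^p)-pT(z)\in p\tau z\mathbb Z_p[[z]].
\]

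Next, I would simply perform the algebraic simplification
\[
T(z^p)-pT(z)\;=\;\frac{g(z^p)}{f(z^p)}-p\,\frac{g(z)}{f(z)}\;=\;\frac{f(z)g(z^p)-p\,f(z^p)g(z)}{f(z)f(z^p)}.
\]
Set $A(z):=f(z)g(z^p)-p\,f(z^p)g(z)$ and $B(z):=f(z)f(z^p)$. Since $f(z)\in 1+z\mathbb Z[[z]]$, also $f(z^p)\in 1+z\mathbb Z[[z]]$, hence $B(z)\in 1+z\mathbb Z[[z]]$, so that $B(z)$ is a unit in $\mathbb Z_p[[z]]$ with $B(z)^{-1}\in 1+z\mathbb Z_p[[z]]$. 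Therefore multiplication by $B(z)$ or $B(z)^{-1}$ preserves the ideal $p\tau z\mathbb Z_p[[z]]$ of $\mathbb Z_p[[z]]$, and so
\[
T(z^p)-pT(z)\in p\tau z\mathbb Z_p[[z]]\;\;\Longleftrightarrow\;\; A(z)\in p\tau z\mathbb Z_p[[z]],
\]
which is exactly condition~\eqref{eq:uv=pvu}. Combining the two equivalences yields the lemma.

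There is really no serious obstacle here: all the $p$-adic content has already been packaged into the preceding corollary (which in turn rests on Dwork's Lemma~\ref{lem:2} together with the standard fact that $\exp(h(z))\in 1+pz\mathbb Z_p[[z]]$ iff $h(z)\in pz\mathbb Z_p[[z]]$ for $h(z)\in z\mathbb Q_p[[z]]$, an estimate that uses $v_p(n!)\le (n-1)/(p-1)$ and $v_p(n)\le\log_p n$). The only thing to check in the passage from that corollary to the lemma is that the denominator $f(z)f(z^p)$ is invertible in $\mathbb Z_p[[z]]$, which is immediate from $f\in 1+z\mathbb Z[[z]]$.
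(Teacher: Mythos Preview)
Your proposal is correct and follows exactly the route the paper indicates: the paper does not give a detailed proof of Lemma~\ref{lem:4} but simply remarks that it is the corollary of Dwork's Lemma (cited as \cite[Corollary~6.7]{lianyau}) specialised to $T(z)=g(z)/f(z)$, and you have spelled out precisely this specialisation together with the obvious observation that $f(z)f(z^p)$ is a unit in $\mathbb Z_p[[z]]$.
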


Because of the special form of the functions which will play the role of 
$f(z)$ and $g(z)$, we 
will be able to deduce~\eqref{eq:uv=pvu} from the following crucial
result, also  
due to Dwork (see~\cite[Theorem~1.1]{dwork}). 

\begin{prop}[\sc ``Dwork's Formal Congruences Theorem''] 
\label{prop:dworkcongruence}
Let $A : \mathbb{Z}_{\ge 0}\to
\mathbb{Q}_p^{ \times}$, $g : \mathbb{Z}_{\ge 0}  
\to \mathbb{Z}_p\setminus\{0\}$ be mappings such that 

$(i)$ $\vert A(0)\vert_p =1$; 

$(ii)$ $A(m) \in g(m)\mathbb{Z}_p$;

$(iii)$ for all integers 
$u,v,n,s\ge 0$ such that $0\le u<p^s$ and 
$0 \le v<p$, 
we have 
\begin{equation} \label{eq:DworkA}
\frac{A(v+up +np^{s+1})}{A(v+up)}-\frac{A(u
+np^{s})}{A(u)} \in p^{s+1} \frac{g(n)}{g(v+up)}\,
\mathbb{Z}_p. 
\end{equation}
Furthermore, let $F(z) = \sum_{m=0}^{\infty} A(m) z^m$, and
$$
F_{m,s}(z)=\sum_{j=mp^s}^{(m+1)p^s-1} A(j)z^j.
$$
Then, for any integers  $m, s \ge 0$, we have 
\begin{equation}\label{eq:newcongruence}
F(z^p)F_{m,s+1}(z) - F(z)F_{m,s}(z^p)\in p^{s+1} g(m) \mathbb{Z}_p[[z]],
\end{equation}
or, equivalently, 
\begin{equation}\label{eq:newcongruence2}
\sum _{j=mp^s} ^{(m+1)p^s-1}\big(A(a+jp)A(K-j)-A(j)A(a+(K-j)p)
\big)\in p^{s+1} g(m) \mathbb{Z}_p
\end{equation}
for all $a$ and $K$ with $0\le a<p$ and $K\ge0$.
\end{prop}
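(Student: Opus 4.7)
My plan is to prove the proposition by induction on $s$, working with the equivalent coefficient form \eqref{eq:newcongruence2}. For the equivalence: the coefficient of $z^{a+Kp}$ (with $0\le a<p$ and $K\ge 0$) in $F(z^p)F_{m,s+1}(z)$ picks out factorisations $ip+j=a+Kp$ with $j\in[mp^{s+1},(m+1)p^{s+1})$; reduction modulo $p$ forces $j=a+j'p$ with $j'\in[mp^s,(m+1)p^s)$, producing the first summand in \eqref{eq:newcongruence2}. The analogous calculation on $F(z)F_{m,s}(z^p)$ produces the second summand, and since every non-negative integer has a unique such decomposition, the two formulations are equivalent.

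The base case $s=0$ asserts $A(a+mp)A(K-m)-A(m)A(a+(K-m)p)\in p\,g(m)\,\mathbb{Z}_p$ for every $0\le a<p$ and $K\ge 0$. For $K<m$ this is vacuous (the relevant $A$-values vanish); for $K\ge m$, I would apply hypothesis (iii) with $s=0$, $u=0$, $v=a$ twice, once with $n=m$ and once with $n=K-m$, to write $A(a+np)=A(a)A(n)/A(0)+p\,g(n)\bigl(A(a)/g(a)\bigr)\epsilon_n$ with $\epsilon_n\in\mathbb{Z}_p$. Substituting both, the main parts $A(a)A(m)A(K-m)/A(0)$ cancel (using $1/A(0)\in\mathbb{Z}_p$ from (i)) and the two residual errors lie in $p\,g(m)\mathbb{Z}_p$ by (ii), the error coming from $n=K-m$ being absorbed via $A(m)\in g(m)\mathbb{Z}_p$. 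For the inductive step, I would reindex the outer sum of \eqref{eq:newcongruence2} at level $s+1$ via $j=i+mp^{s+1}$ with $0\le i<p^{s+1}$, and invoke hypothesis (iii) at level $s+1$ with $(u,v,n)=(i,a,m)$ to rewrite $A(a+jp)=A(a+ip+mp^{s+2})$ as $A(a+ip)A(i+mp^{s+1})/A(i)$ modulo an error already in $p^{s+2}g(m)\mathbb{Z}_p$; a parallel expansion of $A(a+(K-j)p)$ via the base-$p^{s+1}$ digits of $K-j-mp^{s+1}$ handles the second factor, after which the surviving main parts cancel along the symmetry $i\leftrightarrow K-mp^{s+1}-i$ and the whole expression reduces to a level-$s$ quantity of the same shape as \eqref{eq:newcongruence2}, to which the inductive hypothesis applies.

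The main obstacle will be the inductive step. Simply partitioning the level-$(s+1)$ outer range into $p$ consecutive blocks of length $p^s$ and applying the inductive hypothesis block by block only yields a bound in $p^{s+1}\mathbb{Z}_p$, short of the target $p^{s+2}g(m)\mathbb{Z}_p$ by a factor of $p\,g(m)/g(mp+l)$. Closing this gap demands the finer term-by-term re-expansion via (iii) at level $s+1$ sketched above, followed by a delicate cancellation of main terms along the natural involution on the summation index, together with careful bookkeeping of every integrality quotient $A(\cdot)/g(\cdot)$ and $1/A(0)$ to certify that the residual errors really do land in $p^{s+2}g(m)\mathbb{Z}_p$.
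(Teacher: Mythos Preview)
The paper does not prove this proposition; it is quoted from Dwork \cite[Theorem~1.1]{dwork}, as the remarks immediately following the statement make clear. So there is no in-paper proof to compare against, and I evaluate your sketch on its own terms.

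Your treatment of the equivalence between \eqref{eq:newcongruence} and \eqref{eq:newcongruence2} and your base case $s=0$ are correct. The inductive step, however, has a real gap beyond bookkeeping. When you ``parallel-expand'' $A(a+(K-j)p)$ via hypothesis~(iii), you must write $K-j=u'+n'p^{s+1}$ with $0\le u'<p^{s+1}$ and $n'=\lfloor(K-j)/p^{s+1}\rfloor$; the resulting error then lies in $p^{s+2}g(n')\,\mathbb{Z}_p$, and since $n'\ne m$ in general and no relation among $g(n')$, $g(j)$ and $g(m)$ is assumed, multiplying by $A(j)\in g(j)\,\mathbb{Z}_p$ does not place this error in $p^{s+2}g(m)\,\mathbb{Z}_p$. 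Separately, the involution $i\leftrightarrow K-mp^{s+1}-i$ maps the range $0\le i<p^{s+1}$ to itself only when $mp^{s+1}\le K<(m+1)p^{s+1}$; for all other $K$ it sends contributing indices outside the summation range, so the announced main-term cancellation does not take place as stated. You have correctly located the obstacle --- gaining the extra factor $p\cdot g(m)/g(mp+l)$ beyond the blockwise application of the inductive hypothesis --- but the mechanism you propose does not close it; Dwork's actual argument for the passage $s\to s+1$ is organised differently and is worth consulting directly.
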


\begin{Remarks} 
(a) Dwork's original theorem is in fact more general in that it
contains families of functions $A_r$ and $g_r$, $r=0,1,2,\dots,$
(which are all equal to $A$, respectively to $g$, in the above
specialisation). Moreover,
Dwork proved his theorem with 
$A_r:\mathbb{Z}_{\ge 0}\to\mathbb{C}_p^{\times}$, 
$g_r:\mathbb{Z}_{\ge 0}\to\mathcal{O}_p\setminus\{0\}$ and 
$A_r(m) \in g_r(m)\mathcal{O}_p$ 
(where $\mathcal{O}_p$ is the ring of integers in $\mathbb{C}_p$).
He obtained
a result similar to~\eqref{eq:newcongruence} and \eqref{eq:newcongruence2}, 
with $\mathcal{O}_p$ instead of
$\mathbb{Z}_p$. In our more restrictive setting,~\eqref{eq:newcongruence} 
and \eqref{eq:newcongruence2} 
hold because $\big(p^{s+1}g(m)\mathcal{O}_p\big) \cap \mathbb{Q}_p = 
p^{s+1}g(m)\mathbb{Z}_p.$

(b) For any integers  $a$ and $K$ with $0\le a<p$ and
$K\ge0$, the sum  
\begin{equation}\label{eq:rajout1}
\sum _{j=mp^s} ^{(m+1)p^s-1}\big(A(a+jp)A(K-j)-A(j)A(a+(K-j)p)
\big)
\end{equation}
is exactly the $(a+pK)$-th Taylor coefficient of 
$F(z^p)F_{m,s+1}(z) - F(z)F_{m,s}(z^p)$, 
which explains the equivalence between the formal
congruence~\eqref{eq:newcongruence} and the  
congruence~\eqref{eq:newcongruence2}. Note that in~\eqref{eq:rajout1} the 
value of $A$ at negative integers 
must be taken as $0$.

(c) Most authors chose $g(m)=1$ or a constant in $m$. We will
use instead $g(m)=A(m)$: this choice has already been made by Dwork
in~\cite[Sec.~2, p.~37]{dworkihes}.

(d) Dwork also applied his methods to the problems 
considered in the present paper. Indeed, he proved a 
result,
namely \cite[p.~311, Theorem~4.1]{dwork}, which implies that for any
prime $p$ that does not divide $N_1N_2\cdots N_k$, 
the mirror maps ${\mathbf q}_{{\mathbf N}}(z)$ have Taylor 
coefficients in $\mathbb{Z}_p$ (see~\cite[Proposition 2]{zud} for
details). This fact was used by the  
authors of~\cite{lianyau1, lianyau, zud} who 
focussed essentially 
on the remaining case when $p$ divides $N_1N_2\cdots N_k$. Our
approach is different, for we make no distinction of  
this kind between prime numbers.
\end{Remarks}

During the proofs of Lemma~\ref{lem:strat3} 
in Section~\ref{sec:10}, 
we will
also use certain properties of the $p$-adic gamma 
function $\Gamma_p$. This function is defined on integers $n\ge 1$ by 
$$
\Gamma_p(n) = (-1)^n \prod_{\stackrel{k=1}{(k,p)=1}}^{n-1} k.
$$
We will not consider its extension to 
$\mathbb{Z}_p.$
In the following lemma,
we collect the results on $\Gamma_p$ that we shall need later on.
\begin{lem}\label{lem:gammap}
$(i)$ For all integers $n\ge 1$, we have 
\begin{equation*}
\frac{(np)!}{n!} = (-1)^{np+1}p^n \Gamma_p(1+np).
\end{equation*}
$(ii)$ For all integers $k\ge 1,n\ge 1,s\ge 0$, we have 
\begin{equation*}
\Gamma_p(k+np^s) \equiv \Gamma_p(k) \mod p^s.
\end{equation*}
\end{lem}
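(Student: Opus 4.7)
\textbf{Proof plan for Lemma~\ref{lem:gammap}.}
For part~(i), the plan is a direct calculation from the definition of $\Gamma_p$. Split the product $(np)!=\prod_{j=1}^{np}j$ into the $n$ multiples of $p$ in $\{1,\ldots,np\}$, namely $p,2p,\ldots,np$, whose product is $p^n\,n!$, and the remaining factors, all coprime to $p$, whose product equals
$$
\prod_{\substack{1\le k\le np\\(k,p)=1}}k \;=\;(-1)^{np+1}\,\Gamma_p(1+np)
$$
by the very definition of $\Gamma_p$ (the exponent on $-1$ comes from the fact that $\Gamma_p(1+np)$ carries a factor $(-1)^{1+np}$). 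Dividing by $n!$ on both sides gives the claimed identity. I expect no serious obstacle here; it is essentially a bookkeeping of the prime $p$.

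For part~(ii), I would first reduce to the case $n=1$ by induction: if $\Gamma_p(k+p^s)\equiv\Gamma_p(k)\pmod{p^s}$ holds for every starting point $k\ge1$, then iterating gives $\Gamma_p(k+np^s)\equiv\Gamma_p(k)\pmod{p^s}$. For the base case, unfold the definition to obtain
$$
\frac{\Gamma_p(k+p^s)}{\Gamma_p(k)} \;=\; (-1)^{p^s}\prod_{\substack{k\le j<k+p^s\\(j,p)=1}} j.
$$
The key observation is that as $j$ runs over the integers in $[k,k+p^s)$ coprime to $p$, the residues $j\bmod p^s$ run exactly once through the $\varphi(p^s)$ reduced residue classes modulo $p^s$. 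Consequently the product above is congruent modulo $p^s$ to the ``Wilson product''
$$
W_{p^s}\;:=\;\prod_{\substack{1\le u<p^s\\(u,p)=1}}u.
$$

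The crux of the argument is then to show $(-1)^{p^s}W_{p^s}\equiv 1\pmod{p^s}$, which is the classical Wilson congruence for prime powers (for odd $p$ one has $W_{p^s}\equiv-1$, and the signs match). I expect this sign-analysis, and in particular the small-prime case $p=2$, to be the main obstacle; it can be handled either by a direct case distinction or, more cleanly, by applying part~(i) to isolate $\Gamma_p(1+p^s)$ in closed form and invoking standard divisibility estimates for $(p^s)!/p^{s-1}!$ to pin down $\Gamma_p(1+p^s)\bmod p^s$ first, before bootstrapping to arbitrary $k$ via translation of the indexing set.
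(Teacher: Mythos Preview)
The paper does not actually prove this lemma; it cites Zudilin for~(i) and Lang for~(ii). Your sketch for~(i) is the standard direct computation and is correct.

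For~(ii), reduction to the Wilson product $W_{p^s}=\prod_{(u,p)=1,\,1\le u<p^s}u$ is the classical route. For odd $p$ one has $W_{p^s}\equiv -1\pmod{p^s}$ and $(-1)^{p^s}=-1$, so the ratio is $\equiv 1$ and your argument goes through. You are right to flag $p=2$ as the obstacle, but your claim that it ``can be handled'' is too optimistic: with the definition given in the paper the statement is \emph{false} at $p=2$, $s=2$. Indeed $\Gamma_2(1)=-1$ while $\Gamma_2(5)=(-1)^5\cdot 1\cdot 3=-3\not\equiv -1\pmod 4$. (For $s\ge 3$ Gauss's generalisation of Wilson gives $W_{2^s}\equiv 1\pmod{2^s}$, and $s\le 1$ is trivial, so $s=2$ is the unique exception.) This wrinkle does not affect the paper's application in Section~\ref{sec:10}: the lemma is used there on a quotient $\prod_i\Gamma_p(1+\alpha_{i,j}\,\cdot)/\prod_i\Gamma_p(1+\beta_{i,j}\,\cdot)$, and since $\sum_i\alpha_{i,j}=\sum_i\beta_{i,j}$ the extra signs $(-1)^{\alpha_{i,j}n}$ and $(-1)^{\beta_{i,j}n}$ cancel in the ratio.
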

\begin{proof} 
See~\cite[Lemma~7]{zud} for $(i)$ and~\cite[p.~71, Lemma~1.1]{lang} for $(ii)$.
\end{proof}

\section{Auxiliary lemmas}
\label{sec:8}

In this section, we establish three
auxiliary results.  
The first one, Lemma~\ref{lem:10a}, is required for the proof of
Lemma~\ref{lem:12a} in 
Section~\ref{sec:9}, while the second one, Lemma~\ref{lem:ultime},
is required for the proof of Lemma~\ref{lem:strat3} in
Section~\ref{sec:10}. The third result, Lemma~\ref{lem:diviBB}, 
justifies an assertion made in the Introduction
(see item~(b) in the remarks after Theorem~\ref{thm:2}). 
Moreover, the proofs of Lemmas~\ref{lem:ultime} and~\ref{lem:diviBB}
make themselves use of Lemma~\ref{lem:10a}.

\begin{lem} \label{lem:10a}
For any integer $N\ge 1$ with associated parameters $\al_i, \beta_i,
\mu, \eta$, the function 
$$\Delta(x):=
 \sum_{i=1}^{\mu}
\left\lfloor \al_{i}x \right\rfloor - \sum_{i=1}^{\eta} 
\left\lfloor \be_{i}x\right\rfloor
$$
has the following properties:
\begin{enumerate}
\item [$(i)$] $\De$ is $1$-periodic. In particular, $\De(n)=0$ for
all integers $n$.
\item [$(ii)$] For all integers $n$, $\De$ is weakly increasing on
intervals of the form $[n,n+1)$.
\item [$(iii)$] For all real numbers $x$, we have $\De(x)\ge0$.
\item [$(iv)$] For all rational numbers $r\neq 0$ whose denominator 
is an element of 
$\{2,3,\dots,N\}$, 
we have $\Delta(r)\ge1$.
\end{enumerate}
\end{lem}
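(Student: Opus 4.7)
The plan is to exploit the fact that each $\al_i$ and each $\be_i$ is a positive integer dividing $N$---each is of the form $N/d$ where $d$ is a squarefree product of prime divisors of $N$. Consequently $\De$ is a right-continuous step function whose only possible discontinuities in each interval $[n,n+1)$ lie among the $N$ points $j/N$, $j\in\{0,1,\dots,N-1\}$. Once the size of each jump is known, all four statements will fall out immediately.

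For (i), since $\al_i,\be_i\in\mathbb{Z}$ one has $\lfloor\al_i(x+n)\rfloor=\lfloor\al_i x\rfloor+n\al_i$, and the standing identity $\sum_i\al_i=\sum_i\be_i$ then forces $\De(x+n)=\De(x)$; combined with $\De(0)=0$ this yields $\De(n)=0$ for every integer $n$.

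The core of the proof is the explicit jump calculation. Parametrising the $\al$-values and the non-unit $\be$-values by subsets $S\subseteq\{p_1,\dots,p_\ell\}$ of, respectively, even and odd cardinality via $N/\prod_{p\in S}p$, the step $\lfloor(N/\prod_{p\in S}p)x\rfloor$ jumps by $+1$ at $x=j/N$ precisely when $\prod_{p\in S}p$ divides $j$; for $j\in\{1,\dots,N-1\}$ the $\varphi(N)$ copies of $\be=1$ contribute nothing, since $N\nmid j$. Writing $P_j:=\{p\mid N:p\mid j\}$, an inclusion--exclusion will then give
\[
J(j)\;:=\;\De(j/N)-\lim_{x\to (j/N)^{-}}\De(x)\;=\;\sum_{S\subseteq P_j}(-1)^{|S|}\;=\;[\,P_j=\emptyset\,]\;=\;[\gcd(j,N)=1]\in\{0,1\}.
\]
Since every jump of $\De$ on $[0,1)$ is nonnegative and $\De(0)=0$, parts (ii) and (iii) follow at once, after extending by $1$-periodicity.

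For (iv), if $r$ has reduced denominator $b\in\{2,\dots,N\}$, then after reducing modulo $1$ I may assume $r=a/b$ with $1\le a\le b-1$, so $r\ge 1/b\ge 1/N$; the jump at $1/N$ contributes $J(1)=[\gcd(1,N)=1]=1$, so $\De(r)\ge 1$. The only non-bookkeeping ingredient is the jump identity above, whose driving force is precisely the even/odd split built into the definitions \eqref{eq:aj}--\eqref{eq:bj}; tracking the correct bookkeeping of the $\varphi(N)$ extra $\be=1$'s (so that they absorb the ``closing'' jump at $x=1$ and give $\De(1)=0$) is the step I expect to be the main source of bookkeeping errors, but poses no conceptual obstacle.
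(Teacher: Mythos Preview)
Your proof is correct and follows essentially the same route as the paper's: both arguments compute the jump of $\De$ at each possible discontinuity and show it equals $+1$ precisely at the points $r/N$ with $\gcd(r,N)=1$, from which (ii)--(iv) follow immediately. The only cosmetic difference is that you parametrise the $\al_i$'s and non-unit $\be_i$'s directly by even and odd subsets $S$ of the prime divisors of $N$ and evaluate the jump via inclusion--exclusion $\sum_{S\subseteq P_j}(-1)^{|S|}=[P_j=\emptyset]$, whereas the paper tracks exponent vectors $(a_1,\dots,a_\ell)$ and packages the same count into the generating function $\prod_k(1+z\min\{1,e_k-f_k\})$ evaluated at $z=-1$; these are the same computation in different notation. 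Your preliminary observation that every $\al_i,\be_i$ divides $N$ (so all jumps lie at multiples of $1/N$) slightly streamlines the bookkeeping compared with the paper, which instead allows arbitrary denominators $p_1^{f_1}\cdots p_\ell^{f_\ell}$ and then shows the jump vanishes unless each $f_k=e_k$.
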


\begin{Remark} 
Clearly, the function $\Delta$ is a step function.
The proof below shows that, in fact, all the jumps of $\Delta$ 
at non-integral places have the 
value $+1$ and occur exactly at rational numbers of the form 
$r/N$, with $r$ coprime to $N$. 
\end{Remark} 

\begin{proof}
Property~$(i)$ follows from 
the equality $\sum_{i=1}^\mu \al_i = \sum_{i=1}^\eta \beta_i$
and the trivial fact that $\De(0)=0$.

We turn our attention to property~$(ii)$.
For convenience of notation, let
$$N=p_1^{e_1}p_2^{e_2}\cdots p_\ell ^{e_\ell }$$
be the prime factorisation of $N$, where,
as before, $p_1,p_2,\dots,p_\ell $ are the
distinct prime factors of $N$, and where $e_1,e_2,\dots,e_\ell $ are
positive integers. 

As we already observed in the remark above, 
the function $\Delta$ is a step function. Moreover,
jumps of $\Delta$ can only occur at values of $x$
where some of the $\al_ix$, $1\le i\le\mu$, or some of the $\be_jx$,
$1\le j\le \eta$, (or both) are integers.
At these values of $x$, the value of a (possible) jump is the difference
between the number of $i$'s for which $\al_ix$ is integral and the number
of $j$'s for which $\be_jx$ is integral. In symbols, the value of the
jump is
\begin{equation} \label{eq:diff} 
\#\{i:1\le i\le\mu\text{ and }\al_ix\in\mathbb Z\}
-\#\{j:1\le j\le\eta\text{ and }\be_jx\in\mathbb Z\}.
\end{equation}

Let $X$ be
the place of a jump, $X$ not being an integer. Then we can write $X$
as
$$X=\frac {Z} {p_1^{f_1}p_2^{f_2}\cdots p_\ell ^{f_\ell }},$$
where $f_1,f_2,\dots,f_\ell$ are non-negative integers, not all zero,
and where $Z$ is a non-zero integer relatively prime to
${p_1^{f_1}p_2^{f_2}\cdots p_\ell ^{f_\ell }}$.
Given
$$\al_i=p_1^{a_1}p_2^{a_2}\cdots p_\ell ^{a_\ell }$$
with $e_1+e_2+\dots+e_\ell -(a_1+a_2+\dots+a_\ell )$ even
and $0\le e_k-a_k\le 1$ for each $k=1, 2, \ldots, \ell $,
the number $\al_iX$ will be integral if and only if $a_k\ge f_k$ for all
$k\in\{1,2,\dots,\ell \}$. Similarly, given
$$\be_j=p_1^{b_1}p_2^{b_2}\cdots p_\ell ^{b_\ell }$$
with $e_1+e_2+\dots+e_\ell -(b_1+b_2+\dots+b_\ell )$ odd
and $0\le e_k-b_k\le 1$ for each $k=1, 2, \ldots, \ell $,
the number 
$\be_jX$ will be integral if and only if $b_k\ge f_k$ for all
$k\in\{1,2,\dots,\ell \}$. 
We do not have to take into account the $\be_j$'s which are $1$,
because $1\cdot X=X$ is not an integer by assumption.
For the generating function of vectors
$(c_1,c_2,\dots,c_\ell )$ with $e_k\ge c_k\ge f_k$ 
and $e_{k} - c_{k}\le 1$, 
we have
$$
\sum _{c_1=\max\{e_1-1, f_1\}} ^{e_1}\cdots
\sum _{c_\ell =\max\{e_\ell -1, f_\ell \}} ^{e_\ell }
z^{e_1+\dots+e_\ell -(c_1+\dots+c_\ell )}=
\prod _{k=1} ^{\ell }\left(1+z\cdot\min\{1,e_k-f_k\}\right).
$$
We obtain the difference in \eqref{eq:diff} (with $X$ in place of $x$) 
by putting $z=-1$ on the
left-hand side of this relation. The product on the right-hand side
tells us that this difference is $0$ in case that 
$e_k\ne f_k$ for some $k$, while it is $1$ otherwise. Thus, all the jumps 
of the function $\Delta$ at non-integral places have the value $+1$.

Property~$(iii)$ follows now easily from $(i)$ and $(ii)$.

In order to prove 
$(iv)$, we observe that the first jump of $\Delta$ in the interval
$[0,1)$ occurs at $x=1/N$. Thus, $\Delta(x)\ge1$ for all $x$ in $[1/N,1)$.
This implies in particular that $\Delta(r)\ge1$ for all the above rational
numbers $r$ in the interval $[1/N,1)$. That the same assertion holds
in fact for {\it all\/} the above rational numbers $r$ (not
necessarily restricted to $[1/N,1)$) follows now from the
1-periodicity of the function $\Delta$.
\end{proof}

\begin{lem} \label{lem:ultime} 
For any integers $m,r,w\ge 0$ such that
$0\le w<p^r$, we have  
\begin{equation} \label{eq:ultime}
\frac{\mathbf{B}_{\mathbf{N}}(w+mp^{r})}{\mathbf{B}_{\mathbf{N}}(m)}
\in \mathbb{Z}_p,
\end{equation}
where $\mathbf{B}_{\mathbf{N}}(m)$ is the quantity defined after
\eqref{eq:firstreductiona}.
\end{lem}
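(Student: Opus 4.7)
The plan is to reduce the claim to $p$-adic valuations and then invoke the ``Landau function'' $\Delta$ of Lemma~\ref{lem:10a}. Since $\mathbf{B}_{\mathbf{N}}(m)=\prod_{j=1}^{k}\mathbf{B}_{N_j}(m)$, it suffices to prove that $v_p(\mathbf{B}_{N}(w+mp^{r}))\ge v_p(\mathbf{B}_{N}(m))$ for each fixed $N$ and each prime $p$. By Legendre's formula $v_p(n!)=\sum_{i\ge 1}\fl{n/p^i}$, one has
\begin{equation*}
v_p(\mathbf{B}_{N}(m))=\sum_{i\ge 1}\biggl(\sum_{j=1}^{\mu}\fl{\alpha_j m/p^i}-\sum_{j=1}^{\eta}\fl{\beta_j m/p^i}\biggr)=\sum_{i\ge 1}\Delta(m/p^i),
\end{equation*}
where $\Delta$ is the function introduced in Lemma~\ref{lem:10a}.

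Next, I would split $\sum_{i\ge 1}\Delta(n/p^i)$ (with $n:=w+mp^r$) into the ranges $1\le i\le r$ and $i>r$. For $i\le r$, since $mp^{r-i}\in\mathbb{Z}$, the $1$-periodicity of $\Delta$ (part $(i)$ of Lemma~\ref{lem:10a}) gives $\Delta(n/p^i)=\Delta(w/p^i)$, and each of these terms is $\ge 0$ by part $(iii)$. So this block contributes a non-negative amount.

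For $i>r$, set $i'=i-r\ge 1$ and write $m=Qp^{i'}+R$ with $0\le R<p^{i'}$; by $1$-periodicity $\Delta(m/p^{i'})=\Delta(R/p^{i'})$. Since
\begin{equation*}
\frac{n}{p^{i}}=\frac{w}{p^{r+i'}}+\frac{m}{p^{i'}}=Q+\frac{Rp^r+w}{p^{r+i'}},
\end{equation*}
and $Rp^r+w<p^{i'}p^r+p^r-p^r+p^r=p^{r+i'}$, another application of $1$-periodicity yields $\Delta(n/p^i)=\Delta((Rp^r+w)/p^{r+i'})$. Because $0\le R/p^{i'}=Rp^r/p^{r+i'}\le (Rp^r+w)/p^{r+i'}<1$, the weak monotonicity of $\Delta$ on $[0,1)$ (part $(ii)$) forces
\begin{equation*}
\Delta(n/p^{i})=\Delta\!\bigl((Rp^r+w)/p^{r+i'}\bigr)\ge\Delta(R/p^{i'})=\Delta(m/p^{i'}).
\end{equation*}
Summing over $i'\ge 1$ and combining with the non-negative contribution from $i\le r$ gives $v_p(\mathbf{B}_{N}(n))\ge v_p(\mathbf{B}_{N}(m))$, which is \eqref{eq:ultime}.

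The only delicate step is the one for $i>r$: one has to rewrite $n/p^i$ so that both $n/p^i$ and $m/p^{i-r}$ end up in the same fundamental period $[Q,Q+1)$, after which monotonicity of $\Delta$ delivers the inequality. The estimate $Rp^r+w<p^{r+i'}$ (which uses exactly the hypothesis $w<p^r$) is what makes this work; I expect this bookkeeping to be the main (though mild) obstacle.
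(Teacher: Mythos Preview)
Your proof is correct and follows essentially the same approach as the paper: express the $p$-adic valuations via Legendre's formula in terms of the Landau function $\Delta$, then use the $1$-periodicity and monotonicity of $\Delta$ from Lemma~\ref{lem:10a} to compare terms. The paper, however, first performs a preliminary reduction to the case $\gcd(m,p)=1$ (in order to compare $\mathbf{B}_{\mathbf N}(w+mp^r)$ with $\mathbf{B}_{\mathbf N}(mp^r)$ term by term at the same index $\ell$), whereas your index shift $i\mapsto i'=i-r$ together with the Euclidean decomposition $m=Qp^{i'}+R$ handles all $m$ uniformly and renders that reduction unnecessary; your organisation is therefore slightly more economical. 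One cosmetic remark: the chain ``$Rp^r+w<p^{i'}p^r+p^r-p^r+p^r=p^{r+i'}$'' is garbled; the clean bound is $Rp^r+w\le(p^{i'}-1)p^r+(p^r-1)=p^{r+i'}-1<p^{r+i'}$.
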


\begin{proof}
We first show that we can assume that $m$ is coprime to
$p$. Indeed, let us write $m=hp^t$ with $\gcd(h,p)=1$. We have to prove
that  
$$
\frac{\mathbf{B}_{\mathbf{N}}(w+hp^{r+t})}{\mathbf{B}_{\mathbf{N}}(hp^t)}
\in \mathbb{Z}_p. 
$$
Since
$v_p\big(\mathbf{B}_{\mathbf{N}}(hp^t)/\mathbf{B}_{\mathbf{N}}(h)
\big)=0$
(as can be easily seen from \eqref{eq:Bzudilin} and Legendre's
formula
$
v_p(n!) = \sum_{k=1}^{\infty} \lfloor \frac{n}{p^k} \rfloor
$),
this amounts to prove 
that 
$$
\frac{\mathbf{B}_{\mathbf{N}}(w+hp^{r+t})}{\mathbf{B}_{\mathbf{N}}(h)}
\in \mathbb{Z}_p, 
$$
which is the content of the lemma with $r+t$ instead of $r$ and $h$
instead of $m$, with  
$w<p^r<p^{r+t}$.

Therefore, from now on, we assume that $\gcd(m,p)=1$ (however, 
this assumption will only be used after~\eqref{eq:ultime3}). Since
$v_p\big(\mathbf{B}_{\mathbf{N}}(mp^r)/\mathbf{B}_{\mathbf{N}}(m)
\big)=0$, we have 
to prove that 
$$
v_p\left(
\frac{\mathbf{B}_{\mathbf{N}}(w+mp^{r})}{\mathbf{B}_{\mathbf{N}}(mp^r)}\right)
\ge 0 
$$
or, in an equivalent form, that
\begin{multline}\label{eq:ultime2}
\sum_{j=1}^k\sum_{\ell=1}^{\infty} \Bigg( \bigg(\sum_{i=1}^{\mu_j} 
 \left\lfloor \frac{\al_{i,j}(w+mp^r)}{p^\ell}\right\rfloor -
\sum_{i=1}^{\eta_j}  \left\lfloor
\frac{\be_{i,j}(w+mp^r)}{p^\ell}\right\rfloor 
\bigg)
\\
- 
\bigg( \sum_{i=1}^{\mu_j} \left\lfloor \frac{\al_{i,j}
mp^r}{p^\ell}\right\rfloor - \sum_{i=1}^{\eta_j}  \left\lfloor  
\frac{\be_{i,j} mp^r}{p^\ell}\right\rfloor\bigg)
\Bigg) \ge 0,
\end{multline}
where $\al_{i,j}, \beta_{i,j}, \mu_j, \eta_j$ are the parameters
associated to $N_j$.

If $\ell\le r$, then for any $j\in\{1, 2,\ldots, k \}$,  
$$
\sum_{i=1}^{\mu_j} \left\lfloor \frac{\al_{i,j} mp^r}{p^\ell}\right\rfloor 
- \sum_{i=1}^{\eta_j}  \left\lfloor \frac{\be_{i,j}
mp^r}{p^\ell}\right\rfloor   
= mp^{r-\ell} \left(\sum_{i=1}^{\mu_j} \al_{i,j} - \sum_{i=1}^{\eta_j}
\be_{i,j} \right)=0. 
$$
Moreover, 
$$
\sum_{i=1}^{\mu_j} 
 \left\lfloor \frac{\al_{i,j}(w+mp^r)}{p^\ell}\right\rfloor -
\sum_{i=1}^{\eta_j}  \left\lfloor
\frac{\be_{i,j}(w+mp^r)}{p^\ell}\right\rfloor \ge0
$$
because of Lemma~\ref{lem:10a}$(iii)$ with $N=N_j$. It therefore
suffices to show
\begin{multline} \label{eq:ells+1}
\sum_{j=1}^k\sum_{\ell=r+1}^{\infty} \Bigg( \bigg(\sum_{i=1}^{\mu_j} 
 \left\lfloor \frac{\al_{i,j}(w+mp^r)}{p^\ell}\right\rfloor -
\sum_{i=1}^{\eta_j}  \left\lfloor
\frac{\be_{i,j}(w+mp^r)}{p^\ell}\right\rfloor 
\bigg)
\\
- 
\bigg( \sum_{i=1}^{\mu_j} \left\lfloor \frac{\al_{i,j}
mp^r}{p^\ell}\right\rfloor - \sum_{i=1}^{\eta_j}  \left\lfloor  
\frac{\be_{i,j} mp^r}{p^\ell}\right\rfloor\bigg)
\Bigg) \ge 0,
\end{multline}
(The reader should note the difference to \eqref{eq:ultime2}
occurring in the summation bounds for $\ell$.)
For $\ell>r$, set $x_{\ell}=\{mp^r/p^{\ell}\}$
and  $y_{\ell}=\{(w+mp^r)/p^{\ell}\}$. Using again 
$\sum _{i=1} ^{\mu_j}\al_{i,j}-\sum _{i=1} ^{\eta_j}\be_{i,j}=0$,
we see that the left-hand side of~\eqref{eq:ells+1} is equal to 
\begin{equation} \label{eq:ultime3} 
\sum_{j=1}^k \sum_{\ell=r+1}^{\infty} \Bigg( \bigg(\sum_{i=1}^{\mu_j} 
 \left\lfloor \al_{i,j} y_{\ell}\right\rfloor - \sum_{i=1}^{\eta_j}
\left\lfloor \be_{i,j} y_{\ell} \right\rfloor \bigg) 
\\
- 
\bigg( \sum_{i=1}^{\mu_j} \left\lfloor \al_{i,j} x_{\ell}\right\rfloor
- \sum_{i=1}^{\eta_j}  \left\lfloor \be_{i,j} 
x_{\ell}\right\rfloor\bigg)\Bigg).
\end{equation}

We now claim that $x_{\ell}\le y_{\ell}$ for $\ell>r$.
To see this, we begin by the observation that, 
since $m$ and $p$ are coprime and $\ell>r$, the rational number
$m/p^{\ell-r}$ is not an integer. It follows that 
$$
x_{\ell} + \frac{1}{p^{\ell-r}}= \left\{\frac{m}{p^{\ell-r}}\right\}
+ \frac{1}{p^{\ell-r}} \le 1. 
$$
Hence, since $w<p^r$, we infer that 
$$
x_{\ell}+ \frac{w}{p^\ell} <1.
$$
On the other hand, we have
$$
y_{\ell}=\left\{\frac{w}{p^\ell} + \left\lfloor \frac{m}{p^{\ell-r}}
\right\rfloor+ x_{\ell}\right\} =  
\left\{\frac{w}{p^\ell} +  x_{\ell}\right\}=\frac{w}{p^\ell} +  x_{\ell}.
$$
Since $w\ge 0$, we obtain indeed $y_{\ell} \ge x_{\ell}$, as we claimed.

Using $x_{\ell}\le y_{\ell}$ together with Lemma~\ref{lem:10a}$(ii)$, 
we see that, for $\ell>r$ and $j=1,2,\ldots k$, 
we have 
$$
 \sum_{i=1}^{\mu_j} 
 \left\lfloor \al_{i,j} y_{\ell}\right\rfloor - \sum_{i=1}^{\eta_j}
\left\lfloor \be_{i,j} y_{\ell} \right\rfloor   
\ge 
 \sum_{i=1}^{\mu_j} \left\lfloor \al_{i,j} x_{\ell}\right\rfloor -
\sum_{i=1}^{\eta_j}  \left\lfloor \be_{i,j} x_{\ell}\right\rfloor,  
$$
which shows that the expression in \eqref{eq:ultime3} is non-negative,
thus establishing~\eqref{eq:ells+1} and also \eqref{eq:ultime2}. 
This finishes the proof of the lemma.
\end{proof}

We conclude this section with a result which was announced 
in item~(b) of the remarks after Theorem~\ref{thm:2}.
It is used nowhere else, but we give it 
here for the sake of completeness. 
It is a generalisation of Lemma~\ref{lem:multinomial/N!}. By the same
techniques used to prove Theorem~\ref{thm:2},  
it enables one to prove that 
${\mathbf q}_{1,\mathbf{N}}(z)^{1/\mathbf{B}_{\mathbf{N}}(1)}\in
\mathbb{Z}[[z]]$ (see Remark~\ref{foot:1} 
in Section~\ref{sec:7}). 

\begin{lem}\label{lem:diviBB} 
For any vector $\mathbf{N}$ and any integer $m \ge 1$, we have that 
$\mathbf{B}_{\mathbf{N}}(1)$ divides $\mathbf{B}_{\mathbf{N}}(m)$,
where $\mathbf{B}_{\mathbf{N}}(m)$ is the quantity defined after
\eqref{eq:firstreductiona}.
\end{lem}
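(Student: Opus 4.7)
The plan is to reduce the divisibility $\mathbf{B}_{\mathbf{N}}(1)\mid \mathbf{B}_{\mathbf{N}}(m)$ to an inequality of $p$-adic valuations for every prime $p$, then exploit the properties of the Landau functions $\Delta_j$ associated to each $N_j$ (as provided by Lemma~\ref{lem:10a}). Since $\mathbf{B}_{\mathbf{N}}(m)=\prod_{j=1}^{k}\mathbf{B}_{N_j}(m)$ is a product of integers (by Landau's criterion, via Lemma~\ref{lem:10a}$(iii)$), it suffices to show that for every prime $p$,
\[
v_p\bigl(\mathbf{B}_{\mathbf{N}}(m)\bigr)\ge v_p\bigl(\mathbf{B}_{\mathbf{N}}(1)\bigr).
\]
By Legendre's formula applied to \eqref{eq:Bzudilin}, this inequality rewrites as
\[
\sum_{j=1}^{k}\sum_{\ell=1}^{\infty}\Delta_j\!\left(\frac{m}{p^\ell}\right)\ge \sum_{j=1}^{k}\sum_{\ell=1}^{\infty}\Delta_j\!\left(\frac{1}{p^\ell}\right),
\]
where $\Delta_j$ denotes the function from Lemma~\ref{lem:10a} attached to $N_j$. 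So it suffices to prove the inequality separately for each index~$j$; fix one such $j$ from now on.

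Next I would use the $1$-periodicity of $\Delta_j$ (Lemma~\ref{lem:10a}$(i)$) to replace $\Delta_j(m/p^\ell)$ by $\Delta_j(x_\ell)$, where $x_\ell:=\{m/p^\ell\}$ is the fractional part. Let $v:=v_p(m)$ and write $m=p^vm'$ with $\gcd(m',p)=1$. For $\ell\le v$ the number $m/p^\ell$ is an integer, so $x_\ell=0$ and $\Delta_j(x_\ell)=0$. For $\ell>v$ one has $x_\ell=\{m'/p^{\ell-v}\}$; since $m'$ is coprime to $p$, its residue $r$ modulo $p^{\ell-v}$ satisfies $\gcd(r,p)=1$, hence $r\ge 1$, and therefore
\[
x_\ell=\frac{r}{p^{\ell-v}}\ge \frac{1}{p^{\ell-v}}.
\]

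Finally, applying the monotonicity statement of Lemma~\ref{lem:10a}$(ii)$ on the interval $[0,1)$ gives $\Delta_j(x_\ell)\ge \Delta_j(1/p^{\ell-v})$ for every $\ell>v$, and thus
\[
\sum_{\ell=1}^{\infty}\Delta_j\!\left(\frac{m}{p^\ell}\right)
\;=\;\sum_{\ell>v}\Delta_j(x_\ell)
\;\ge\;\sum_{\ell>v}\Delta_j\!\left(\frac{1}{p^{\ell-v}}\right)
\;=\;\sum_{\ell'=1}^{\infty}\Delta_j\!\left(\frac{1}{p^{\ell'}}\right),
\]
after the shift $\ell'=\ell-v$. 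Summing over $j$ yields the required inequality of valuations and therefore the divisibility. There is no real obstacle beyond correctly handling the shift caused by $v_p(m)>0$: the crucial input is precisely the weak monotonicity of $\Delta_j$ on $[0,1)$ emphasised in the Introduction, which transfers the $\ell$-th term of the valuation of $\mathbf{B}_{\mathbf{N}}(1)$ into the $(\ell+v)$-th term of the valuation of $\mathbf{B}_{\mathbf{N}}(m)$.
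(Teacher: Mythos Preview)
Your proof is correct and follows essentially the same approach as the paper: reduce to a $p$-adic valuation inequality for each factor $\mathbf{B}_{N_j}$, express the valuations via Legendre's formula as sums of $\Delta_j(m/p^\ell)$, use $1$-periodicity to pass to fractional parts, and then apply the weak monotonicity of $\Delta_j$ on $[0,1)$ together with the lower bound $\{m'/p^{\ell}\}\ge 1/p^{\ell}$ when $\gcd(m',p)=1$. The only cosmetic difference is that the paper first reduces to $\gcd(m,p)=1$ by invoking $v_p(\mathbf{B}_N(mp^t))=v_p(\mathbf{B}_N(m))$, whereas you handle $v_p(m)>0$ directly via the index shift $\ell'=\ell-v$; these amount to the same computation.
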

\begin{proof} Obviously, it is sufficient to prove the assertion for
$k=1$ and $\mathbf{N}=(N)$. Let  
$\Delta$ be the function associated to $N$ as defined in Lemma~\ref{lem:10a}.
We want to prove that, for any prime $p$, we have
$v_p(\mathbf{B}_{N}(m))\ge v_p(\mathbf{B}_{N}(1))$. 
We can assume that $m$ and $p$ are coprime because 
$v_p(\mathbf{B}_{N}(mp^t))=v_p(\mathbf{B}_{N}(m))$ for any 
integers $m,t\ge 0$. 

Now, when $\gcd(m,p)=1$, we have that 
\begin{align*}
v_p(\mathbf{B}_{N}(m))& = \sum_{\ell=1}^{\infty} \Delta(m/p^\ell)
= \sum_{\ell=1}^{\infty} \Delta(\{m/p^\ell\})
\\
&\ge \sum_{\ell=1}^{\infty} \Delta(1/p^\ell) = v_p(\mathbf{B}_{N}(1)).
\end{align*}
Here,
we used the $1$-periodicity of $\Delta$ for the second equality.
For the inequality, we used that 
$\{m/p^\ell\} \ge 1/p^\ell$ (because $\gcd(m,p)=1$ 
implies that $m/p^\ell$ is not an integer) and the 
(partial) monotonicity of $\De$ described in Lemma~\ref{lem:10a}$(ii)$.
\end{proof}

\section{Proof of Lemma~\ref{lem:12a}}
\label{sec:9}

The assertion is trivially true if $\lfloor La/p\rfloor=0$, that is,
if $0\le a<p/L$. We may hence assume that $p/L\le a<p$ from now on.

We write
$\al_{i,m}$, $\beta_{i,m}$, $\mu_m$, and $\eta_m$ for
the parameters associated to $N_m$, $m=1,2,\dots,k$. We may assume 
that, without loss of generality,
$\max(N_1, \ldots, N_k)=N_k$.  
Then, using again Lemma~\ref{lem:10a}$(iii)$,
\begin{align*}
v_p\big(\mathbf{B}_{\mathbf{N}}(a+pj)\big) & =
\sum_{m=1}^k\sum_{\ell=1}^{\infty}\left( \sum_{i=1}^{\mu_m} 
\left\lfloor \frac{\al_{i,m}(a+pj)}{p^\ell}\right\rfloor - \sum_{i=1}^{\eta_m}
\left\lfloor \frac{\be_{i,m}(a+pj)}{p^\ell}\right\rfloor\right)\\
&\ge \sum_{\ell=1}^{\infty}\left( \sum_{i=1}^{\mu_k}
\left\lfloor \frac{\al_{i,k}(a+pj)}{p^\ell}\right\rfloor - \sum_{i=1}^{\eta_k} 
\left\lfloor
\frac{\be_{i,k}(a+pj)}{p^\ell}\right\rfloor\right)=\sum_{\ell=1}^{\infty}
\Delta_k\!\left(\frac{a+jp}{p^\ell}\right) 
\end{align*}
with 
$$
\Delta_k(x) : = \sum_{i=1}^{\mu_k}
\left\lfloor \al_{i,k}x \right\rfloor - \sum_{i=1}^{\eta_k} 
\left\lfloor \be_{i,k}x\right\rfloor.
$$

On the other hand, by definition of the harmonic numbers, we have
$$
H_{Lj+\lfloor La/p\rfloor} - H_{Lj}=\frac {1} {Lj+1}+\frac {1} {Lj+2}+\dots+
\frac {1} {Lj+\lfloor La/p\rfloor}.
$$
It therefore suffices to show that
\begin{equation}\label{eq:rajout8}
v_p\big(\mathbf{B}_{\mathbf{N}}(a+pj)\big) \ge
1+ v_p(Lj+\ep)
\end{equation} 
for any integer $\ep$ such that $1\le \ep\le \lfloor La/p\rfloor.$  
We have 
$$
\frac{a+jp}{p^\ell} = \frac{a-p\ep/L}{p^\ell} + \frac{pj+ p\ep/L }{p^\ell}.
$$

\subsection{First step} We claim that 
\begin{equation} \label{eq:claim1}
\Delta_k\!\left(\frac{a+jp}{p^\ell}\right) \ge  \Delta_k\!\left(\frac{pj+
p\ep/L }{p^\ell}\right). 
\end{equation}
To see this, we first observe that 
$$
\Delta_k\!\left(\frac{a+jp}{p^\ell}\right)=\Delta_k\!\left(\frac{a-p\ep/L}{p^\ell}
+ \frac{pj+ p\ep/L }{p^\ell}\right)  
= \Delta_k\!\left(\frac{a-p\ep/L}{p^\ell} + \left\{\frac{pj+ p\ep/L
}{p^\ell}\right\}\right) 
$$ 
because $\Delta_k$ is $1$-periodic. 

We now claim that 
\begin{equation}\label{eq:inequalities}
0\le \frac{a-p\ep/L}{p^\ell} + \left\{\frac{pj+ p\ep/L }{p^\ell}\right\} < 1.
\end{equation}
Indeed, positivity is clear and we now concentrate on the upper bound.  
We write $j=up^{\ell-1}+v$ with $0\le
v<p^{\ell-1}$. Hence,
$$
 \left\{\frac{pj+ p\ep/L }{p^\ell}\right\}  =  \left\{u+ \frac{pv+
p\ep/L }{p^\ell}\right\} =  
\left\{\frac{v}{p^{\ell-1}} + \frac{p\ep/L }{p^\ell}\right\}.
$$
Since $0 \le \ep \le \lfloor La/p\rfloor<L$, we have  $0\le
\frac{p\ep/L }{p^\ell} <1/p^{\ell-1}$ and therefore 
$$
0\le \frac{v}{p^{\ell-1}} + \frac{p\ep/L }{p^\ell} <
\frac{v}{p^{\ell-1}} + \frac{1}{p^{\ell-1}} \le 1 
$$
(where the last inequality holds by definition of $v$), whence
$$
\left\{\frac{pj+ p\ep/L }{p^\ell}\right\} = \frac{pv+ p\ep/L }{p^\ell}.
$$
Therefore, we have 
$$
 \frac{a-p\ep/L}{p^\ell} + \left\{\frac{pj+ p\ep/L }{p^\ell}\right\} =
\frac{a-p\ep/L}{p^\ell} +  \frac{pv+ p\ep/L }{p^\ell}  
= \frac{a}{p^\ell} + \frac{v}{p^{\ell-1}}.
$$
Since $\frac{v}{p^{\ell-1}}<1$ and $a<p$, we necessarily have  
$$
\frac{a}{p^\ell} + \frac{v}{p^{\ell-1}} < 1,
$$
as desired.

Since $\frac{a-p\ep/L}{p^\ell}\ge 0$, it follows from 
Lemma~\ref{lem:10a}$(i)$,$(ii)$ (with $\Delta=\Delta_k$) and
\eqref{eq:inequalities} that  
$$
\Delta_k\!\left(\frac{a-p\ep/L}{p^\ell} + \left\{\frac{pj+ p\ep/L
}{p^\ell}\right\}\right) \ge  
\Delta_k\!\left( \left\{\frac{pj+ p\ep/L }{p^\ell}\right\} \right) =
\Delta_k\!\left( \frac{pj+ p\ep/L }{p^\ell} \right). 
$$
Thus, we have proved the claim~\eqref{eq:claim1} 
made at the beginning of this step.

\subsection{Second step} Let us write $Lj+\ep=\beta p^d$,
where $d=v_p(Lj+\ep)$, so that  
$$
\frac{pj+ p\ep/L }{p^\ell} = \frac{\beta p^{d+1-\ell}}{L}.
$$
We have proved in the first step that 
\begin{equation}\label{eq:rajout7}
v_p\big(\mathbf{B}_{\mathbf{N}}(a+pj)\big) \ge \sum_{\ell=1}^\infty
\Delta_k\!\left(\frac{\beta p^{d+1-\ell}}{L}\right). 
\end{equation}
Now we claim that $\be p^{d+1-\ell }/L$ cannot be an integer.
Indeed, if it were, then $L\ga p^{\ell -1}=\be p^d=Lj+\ep$ for a suitable
integer $\ga$. It would follow that $L$ divides $\ep$, contradicting
$1\le \ep\le La/p<L$.
Furthermore, for  
$\ell\le d+1$, the denominator of $\frac{\beta p^{d+1-\ell}}{L}$ is
obviously at most $L$. 
Since $L\le N_k$, it follows then from
Lemma~\ref{lem:10a}$(iv)$, again with $\Delta=\Delta_k$,
that $\Delta_k(\beta p^{d+1-\ell}/L) \ge 1$ for any $\ell$ in
$\{1, 2, \ldots,  d+1\}.$  
Use of this estimation in \eqref{eq:rajout7} gives
$$v_p\big(\mathbf{B}_{\mathbf N}(a+pj)\big)\ge d+1=1+v_p(Lj+\ep).$$
This completes the proof of~\eqref{eq:rajout8} and, hence, of
Lemma~\ref{lem:12a}.

\section{Proof of Lemma~\ref{lem:strat3}}
\label{sec:10}

We want to use Proposition~\ref{prop:dworkcongruence}
with $A(m)=g(m)=\mathbf B_{\mathbf N}(m)$.
Clearly, the proposition would imply
that $\mathbf S(a,K,s,p,m)\in p^{s+1}
\mathbf B_{\mathbf N}(m)\mathbb Z_p$, and, thus, the claim.
So, we need to
verify the conditions $(i)$--$(iii)$ in the statement of the proposition.

Condition~$(i)$ is true since $\mathbf B_{\mathbf N}(0)=1$. 
Condition~$(ii)$ holds by the definitions of $A(m)$ and $g(m)$.
To check that Condition~$(iii)$ holds is more 
involved.
The proof will be decomposed in three steps. 
The reader should recall that
$$
\mathbf{B}_{\mathbf{N}}(m) := \prod_{j=1}^k \mathbf{B}_{N_j}(m),
$$
where $\mathbf B_{N_j}(m)$ is given by 
\eqref{eq:Bzudilin}, or, 
alternatively (cf.\ 
\cite[Lemma~4]{zud}, respectively \eqref{eq:rajout3} below) as
\begin{equation}\label{eq:definitionBNgras}
\mathbf{B}_{N_j}(m) = C_{N_j}^m \prod_{\ell=1}^{\varphi(N_j)} 
\frac{(r_{\ell,j}/N_j)_m}{m!},
\end{equation}
where $C_{N_j}$ and the $r_{\ell,j}$'s are defined as in
Subsection~\ref{ssec:zudconj}. 
Expression~\eqref{eq:definitionBNgras} will be 
useful in the first step below, 
while the direct use of 
Expression~\eqref{eq:Bzudilin} would lead to much more 
involved computations.

\subsection{First step}
Let us 
fix $j\in \{1, 2, \ldots, k\}$. We set 
$D_{N_j}:=N_j^{-\varphi(N_j)}C_{N_j}$, which is an integer.

We claim that
\begin{equation}\label{eq:step1} 
\frac{\mathbf{B}_{N_j}(v+up+np^{s+1})}{\mathbf{B}_{N_j}(up+np^{s+1})} = 
\frac{\mathbf{B}_{N_j}(v+up)}{\mathbf{B}_{N_j}(up)} + \mathcal{O}(p^{s+1}),
\end{equation}
where $\mathcal{O}(R)$ denotes an element of $R\mathbb Z_p$.
To prove~\eqref{eq:step1}, we observe 
that~(\footnote{Identities \eqref{eq:rajout9a} and \eqref{eq:rajout9} 
are immediate consequences of 
the alternative form \eqref{eq:definitionBNgras} of $\mathbf{B}_{N_j}$.
Zudilin 
used them in his proof of the following stronger version
of~\eqref{eq:step1}: 
$$
\frac{\mathbf{B}_{N_j}(v+up+np^{s+1})}{\mathbf{B}_{N_j}(up+np^{s+1})} = 
\frac{\mathbf{B}_{N_j}(v+up)}{\mathbf{B}_{N_j}(up)}\,\big(1 +
\mathcal{O}(p^{s+1})\big). 
$$
However, for this, he assumes that $p$ divides $N_j$ (see~\cite[Eq.~(35)]{zud}).
Here, we do not assume that $p$ divides $N_j$, and therefore we obtain
the weaker equality~\eqref{eq:step1}, which is fortunately enough for
our purposes.}) 
\begin{align} \label{eq:rajout9a}
\frac{\mathbf{B}_{N_j}(v+up+np^{s+1})}{\mathbf{B}_{N_j}(up+np^{s+1})} 
&=  
\frac{D_{N_j}^v \prod_{\ell=1}^{\varphi(N_j)}\prod_{i=1}^v 
\big(r_{\ell,j}+(i-1)N_j+uN_jp + nN_j p^{s+1}\big)}
{\big((v+up+np^{s+1})(v-1+up+np^{s+1})\cdots (1+up+np^{s+1})\big)^{\varphi(N_j)}}\\
&=  
\frac{\Big(D_{N_j}^v \prod_{\ell=1}^{\varphi(N_j)}\prod_{i=1}^v 
\big(r_{\ell,j}+(i-1)N_j+uN_jp \big)\Big)+\mathcal O(p^{s+1})}
{\big((v+up)(v-1+up)\cdots (1+up)\big)^{\varphi(N_j)}+\mathcal
O(p^{s+1})}.
\notag
\end{align}

If $v=0$, then \eqref{eq:step1} holds trivially.  
If $v>0$, then, together with the hypothesis $ v<p$, we infer that
$(v+up)(v-1+up)\cdots (1+up)$  
is not divisible by $p$, and thus we have 
\begin{multline*}
 \frac{1}
{\big((v+up)(v-1+up)\cdots (1+up)\big)^{\varphi(N_j)}+\mathcal{O}(p^{s+1})} \\
=
\frac{1}{\big((v+up)(v-1+up)\cdots (1+up)\big)^{\varphi(N_j)}} \, \big(1+
\mathcal{O}(p^{s+1}) \big). 
\end{multline*}
Hence, 
\begin{multline*}
\frac{\Big(D_{N_j}^v \prod_{\ell=1}^{\varphi(N_j)}\prod_{i=1}^v 
\big(r_{\ell,j}+(i-1)N_j+uN_jp \big)\Big)+\mathcal O(p^{s+1})}
{\big((v+up)(v-1+up)\cdots (1+up)\big)^{\varphi(N_j)}+\mathcal O(p^{s+1})}
\\
= \frac{D_{N_j}^v \prod_{\ell=1}^{\varphi(N_j)}\prod_{i=1}^v 
\big(r_{\ell,j}+(i-1)N_j+uN_jp \big)}
{\big((v+up)(v-1+up)\cdots (1+up)\big)^{\varphi(N_j)}}\\
+\frac{\mathcal{O}(p^{s+1})}
{\big((v+up)(v-1+up)\cdots (1+up)\big)^{\varphi(N_j)}},
\end{multline*}
which proves~\eqref{eq:step1} because 
\begin{equation} \label{eq:vup}
\frac{1}{(v+up)(v-1+up)\cdots (1+up)}\in \mathbb{Z}_p
\end{equation}
and  
\begin{equation}
\label{eq:rajout9}
\frac{D_{N_j}^v \prod_{\ell=1}^{\varphi(N_j)}\prod_{i=1}^v 
\big(r_{\ell,j}+(i-1)N_j+uN_jp\big)}
{\big((v+up)(v-1+up)\cdots (1+up)\big)^{\varphi(N_j)}}
=\frac{\mathbf{B}_{N_j}(v+up)}{\mathbf{B}_{N_j}(up)}   .
\end{equation}

A side result of~\eqref{eq:rajout9} (which was actually 
used to prove~\eqref{eq:step1}) is that  
$$
\frac{\mathbf{B}_{N_j}(v+up)}{\mathbf{B}_{N_j}(up)} \in \mathbb{Z}_p.
$$
We deduce from this fact and from~\eqref{eq:step1} that 
$$
\prod_{j=1}^k
\frac{\mathbf{B}_{N_j}(v+up+np^{s+1})}{\mathbf{B}_{N_j}(up+np^{s+1})}
=
\prod_{j=1}^k
\left(\frac{\mathbf{B}_{N_j}(v+up)}{\mathbf{B}_{N_j}(up)} +
\mathcal{O}(p^{s+1})\right)  
= \prod_{j=1}^k \frac{\mathbf{B}_{N_j}(v+up)}{\mathbf{B}_{N_j}(up)} +
\mathcal{O}(p^{s+1}) ,
$$
or, in other words, 
\begin{equation}\label{eq:main1}
\frac{\mathbf{B}_{\mathbf{N}}(v+up+np^{s+1})}
{\mathbf{B}_{\mathbf{N}}(up+np^{s+1})}
=  
\frac{\mathbf{B}_{\mathbf{N}}(v+up)}{\mathbf{B}_{\mathbf{N}}(up)} +
\mathcal{O}(p^{s+1}). 
\end{equation}

\subsection{Second step} Let us fix $j\in \{1, 2, \ldots, k\}$. 
The properties of $\Gamma_p$ imply that 
\begin{align}
\frac{\mathbf{B}_{N_j}(up+np^{s+1})}{\mathbf{B}_{N_j}(u+np^{s})} 
&= (-1)^{\mu_j-\eta_j}\frac{\prod_{i=1}^{\mu_j}\Gamma_p\big(1+\al_{i,j}(up+np^{s+1})\big)}
{\prod_{i=1}^{\eta_j}\Gamma_p\big(1+\be_{i,j}(up+np^{s+1})\big)} 
\label{eq:firstequality}\\
&= (-1)^{\mu_j-\eta_j}\frac{\prod_{i=1}^{\mu_j}\Gamma_p(1+\al_{i,j}up)+\mathcal{O}(p^{s+1})}
{\prod_{i=1}^{\eta_j}\Gamma_p(1+\be_{i,j}up)+\mathcal{O}(p^{s+1})} 
\label{eq:rajoutcorrectionbis} \\
&= (-1)^{\mu_j-\eta_j}\frac{\prod_{i=1}^{\mu_j}\Gamma_p(1+\al_{i,j}up)}
{\prod_{i=1}^{\eta_j}\Gamma_p(1+\be_{i,j}up)}\,\big(1
+\mathcal{O}(p^{s+1})\big) 
 \label{eq:rajoutcorrection}\\
&= \frac{\mathbf{B}_{N_j}(up)}{\mathbf{B}_{N_j}(u)} \,\big(1
+\mathcal{O}(p^{s+1})\big) ,
\label{eq:2}
\end{align}
where $(i)$ of Lemma~\ref{lem:gammap} is used 
to see~\eqref{eq:firstequality} and~\eqref{eq:2}, and 
$(ii)$ is used for~\eqref{eq:rajoutcorrectionbis}.
Equation~\eqref{eq:rajoutcorrection} holds because $\Gamma_p(m)$
is never divisible by $p$ for any integer $m$. 

Hence, taking the product over $j=1, 2, \ldots, k$, we obtain 
\begin{equation}\label{eq:main2}
\frac{\mathbf{B}_{\mathbf{N}}(up+np^{s+1})}{\mathbf{B}_{\mathbf{N}}(u+np^{s})}
=  
\frac{\mathbf{B}_{\mathbf{N}}(up)}{\mathbf{B}_{\mathbf{N}}(u)}\,\big(1
+ \mathcal{O}(p^{s+1})\big). 
\end{equation}

\subsection{Third step} 
We now multiply the right-hand and left-hand sides
of~\eqref{eq:main1} and~\eqref{eq:main2}. After simplification, we get
\begin{equation*}
\frac{\mathbf B_{\mathbf N}(v+up+np^{s+1})}{\mathbf B_{\mathbf N}(u+np^s)} = 
\frac{\mathbf B_{\mathbf N}(v+up)}{\mathbf B_{\mathbf N}(u)}\,\big(1+\mathcal{O}(p^{s+1})\big) 
+ \frac{\mathbf B_{\mathbf N}(up)}{\mathbf B_{\mathbf N}(u)} \,\mathcal{O}(p^{s+1}).
\end{equation*}
We can rewrite this as 
\begin{align}
\frac{\mathbf B_{\mathbf N}(v+up+np^{s+1})}{\mathbf B_{\mathbf N}(v+up)}
&=\frac{\mathbf B_{\mathbf N}(u+np^s)}{\mathbf B_{\mathbf
N}(u)}\,\big(1+\mathcal{O}(p^{s+1})\big) 
+\frac{\mathbf B_{\mathbf N}(up)}{\mathbf B_{\mathbf N}(u)}\cdot
\frac{\mathbf B_{\mathbf N}(u+np^s)}
{\mathbf B_{\mathbf N}(v+up)} \,\mathcal{O}(p^{s+1})
\notag
\\
&=\frac{\mathbf B_{\mathbf N}(u+np^s)}
{\mathbf B_{\mathbf N}(u)}
+\frac{\mathbf B_{\mathbf N}(u+np^s)}
{\mathbf B_{\mathbf N}(u)}\,\mathcal{O}(p^{s+1})
+\frac{\mathbf B_{\mathbf N}(u+np^s)}
{\mathbf B_{\mathbf N}(v+up)} \,\mathcal{O}(p^{s+1}),
\label{eq:quasifinale}
\end{align}
where the last line holds because 
$v_p\big(\mathbf B_{\mathbf N}(up)/\mathbf B_{\mathbf N}(u)\big)=0$.

If we compare \eqref{eq:DworkA} (with $A(m)=g(m)=\mathbf 
B_{\mathbf N}(m)$) 
and \eqref{eq:quasifinale}, we see that it only
remains to prove that we have  
\begin{equation}\label{eq:check}
\frac{\mathbf{B}_{\mathbf{N}}(u+np^{s})}{\mathbf{B}_{\mathbf{N}}(u)} \in 
\frac{\mathbf{B}_{\mathbf{N}}(n)}{\mathbf{B}_{\mathbf{N}}(v+up)} \,
\mathbb{Z}_p 
\quad
\textup{and}
\quad
\frac{\mathbf{B}_{\mathbf{N}}(u+np^{s})}{\mathbf{B}_{\mathbf{N}}(v+up)}\in 
\frac{\mathbf{B}_{\mathbf{N}}(n)}{\mathbf{B}_{\mathbf{N}}(v+up)} \,
\mathbb{Z}_p. 
\end{equation}
The first assertion in~\eqref{eq:check} can be rewritten as
\begin{equation} \label{eq:ass1} 
\frac{\mathbf{B}_{\mathbf{N}}(u+np^{s})}{\mathbf{B}_{\mathbf{N}}(n)}\cdot
\frac{\mathbf{B}_{\mathbf{N}}(v+up)}{\mathbf{B}_{\mathbf{N}}(u)} \in
\mathbb{Z}_p ,
\end{equation}
while the second assertion can be rewritten as 
\begin{equation} \label{eq:ass2} 
\frac{\mathbf{B}_{\mathbf{N}}(u+np^{s})}{\mathbf{B}_{\mathbf{N}}(n)}\in
\mathbb{Z}_p.
\end{equation}
Now, the assertion \eqref{eq:ass2} is the special case 
$w=u$, $m=n$ and $r=s$ of 
Lemma~\ref{lem:ultime}, while \eqref{eq:ass1} follows from 
\eqref{eq:ass2} combined with the special case 
$w=v$, $m=u$ and $r=1$ of Lemma~\ref{lem:ultime}.

\medskip
This completes the proof of the lemma.

\section{Proof of Lemma~\ref{lem:strat4}} \label{sec:6a}

The claim is trivially true if $p$ divides $m$.
We may therefore assume that $p$ does not divide $m$ for the rest of the proof.
Let us write $m=a+pj$, with $0< a<p$. Then
comparison with \eqref{eq:congrconj1a} shows that we are in a
very similar situation here. Indeed, we may derive \eqref{eq:110a}
from Lemma~\ref{lem:12a}. In order to see this, we observe that
\begin{align*}
H_{Lmp^s}-H_{L\fl{m/p}p^{s+1}}&=
\sum _{\ep=1} ^{Lap^s}\frac {1} {Ljp^{s+1}+\ep}\\
&=
\sum _{\ep=1} ^{\fl{La/p}}\frac {1} {Ljp^{s+1}+\ep p^{s+1}}
+
\underset{p^{s+1}\nmid \ep}{\sum _{\ep=1} ^{Lap^s}}\frac {1}
{Ljp^{s+1}+\ep}\\
&=\frac {1} {p^{s+1}}(H_{Lj+\fl{La/p}}-H_{Lj})+
\underset{p^{s+1}\nmid \ep}{\sum _{\ep=1} ^{Lap^s}}\frac {1}
{Ljp^{s+1}+\ep}.
\end{align*}
Because of $v_p(x+y)\ge\min\{v_p(x),v_p(y)\}$, this implies
\begin{equation} \label{eq:vH} 
v_p(H_{Lmp^s}-H_{L\fl{m/p}p^{s+1}})\ge
\min\{ -1-s+v_p(H_{Lj+\fl{La/p}}-H_{Lj}),-s\}.
\end{equation}
It follows that
\begin{multline} \label{eq:vBH}
v_p\Big(\mathbf B_{\mathbf N}(m)\big(H_{Lmp^s}-H_{L\fl{m/p}p^{s+1}}\big)\Big)\\
\ge
-1-s+\min\left\{v_p\Big(\mathbf 
B_{\mathbf N}(a+pj)(H_{Lj+\fl{La/p}}-H_{Lj})\big),
1+v_p\big(\mathbf B_{\mathbf N}(a+pj)\Big)\right\}.
\end{multline}
Use of Lemma~\ref{lem:12a} then completes the proof.

\section{Proof of Lemma~\ref{lem:12}} \label{sec:4}

We follow the same approach as the one of the proof of Lemma~\ref{lem:12a} in
Section~\ref{sec:9}. In particular, the first part below is completely
parallel to the proof of Lemma~\ref{lem:12a}. We nevertheless include it here
for the sake of readability and for later
reference. On the other hand, since Lemma~\ref{lem:12} makes a
stronger divisibility assertion 
than Lemma~\ref{lem:12a}, much more
work is needed to arrive there: the corresponding arguments form the
contents of the second and third part of this proof.

We start again by observing that
the assertion \eqref{eq:congrconj1} is trivially true if $\lfloor La/p\rfloor=0$, that is,
if $0\le a<p/L$. We may hence assume that $p/L\le a<p$ from now on.
A further assumption upon which we agree without loss of generality
for the rest of the proof is that $N_k=\max(N_1, \ldots, N_k)$.

\subsection{First part: a weak version of Lemma~\ref{lem:12}}
In a first step, we prove that
\begin{equation} \label{eq:congrconj2}
B_{\mathbf N}(a+pj)\left(H_{Lj+\lfloor La/p\rfloor} - H_{Lj}\right) \in
p\mathbb{Z}_p\ .
\end{equation}
(The reader should note the absence of the term
$M_{\mathbf{N}}/\Th_L$ in comparison with \eqref{eq:congrconj1}.)

\medskip
For the proof of \eqref{eq:congrconj2}, we note that
the $p$-adic valuation of $B_{\mathbf N}(a+pj)$ is equal to
$$v_p\big(B_{\mathbf N}(a+pj)\big)=
\sum _{i=1} ^{k}\sum _{\ell =1} ^{\infty}\left(\fl{\frac {N_i(a+pj)}
{p^\ell }}- N_i\fl{\frac {a+pj} {p^\ell }}\right).$$
Obviously, all the summands in this sum are non-negative, whence, in
particular,
\begin{equation} \label{eq:111}
v_p\big(B_{\mathbf N}(a+pj)\big)\ge
\sum _{\ell =1} ^{\infty}\left(\fl{\frac {N_k(a+pj)} {p^\ell }}-
N_k\fl{\frac {a+pj} {p^\ell }}\right).
\end{equation}
On the other hand, by definition of the harmonic numbers, we have
$$
H_{Lj+\lfloor La/p\rfloor} - H_{Lj}=\frac {1} {Lj+1}+\frac {1} {Lj+2}+\dots+
\frac {1} {Lj+\lfloor La/p\rfloor}.
$$
It therefore suffices to show that
\begin{equation} \label{eq:100} 
v_p\big(B_{\mathbf N}(a+pj)\big)\ge 1+ 
\max_{1\le\ep\le \lfloor La/p\rfloor}v_p(Lj+\ep).
\end{equation}
The lower bound on the right-hand side of \eqref{eq:111}
can, in fact, be simplified since $0\le a<p$; namely, we have
\begin{equation} \label{eq:104} 
\fl{\frac {a+pj} {p^\ell }}=\fl{\frac {j} {p^{\ell -1}}}.
\end{equation}

For a given integer 
$\ep$ with $1\le\ep\le \lfloor La/p\rfloor$, let
$Lj+\ep=p^d\be$, where $d=v_p(Lj+\ep)$. If we use this notation in 
\eqref{eq:111}, together with \eqref{eq:104}, we obtain 
\begin{equation} \label{eq:103} 
v_p\big(B_{\mathbf N}(a+pj)\big)\ge
\sum _{\ell =1} ^{\infty}\left(\fl{\frac {N_ka} {p^\ell }-\frac
{N_k\ep}
{Lp^{\ell -1}}+ \frac {N_k\be} {L} p^{d+1-\ell }}-
N_k\fl{-\frac {\ep}
{Lp^{\ell -1}}+ \frac {\be} {L} p^{d+1-\ell }}\right).
\end{equation}
Since $\ep\le \lfloor La/p\rfloor$, 
we have $\frac {N_ka} {p^\ell }-\frac {N_k\ep}
{Lp^{\ell -1}}\ge0$, whence
\begin{equation} \label{eq:101} 
\fl{\frac {N_ka} {p^\ell }-\frac {N_k\ep}
{Lp^{\ell -1}}+ \frac {N_k\be} {L} p^{d+1-\ell }}\ge
\fl{ \frac {N_k\be} {L} p^{d+1-\ell }}.
\end{equation}
Clearly, we also have
\begin{equation} \label{eq:102}
 \fl{-\frac {\ep}
{Lp^{\ell -1}}+ \frac {\be} {L} p^{d+1-\ell }}\le
\fl{ \frac {\be} {L} p^{d+1-\ell }}.
\end{equation}
If we use \eqref{eq:101} and \eqref{eq:102} 
in \eqref{eq:103}, then we obtain
\begin{equation} \label{eq:105} 
v_p\big(B_{\mathbf N}(a+pj)\big)\ge
\sum _{\ell =1} ^{\infty}\left(\fl{
 \frac {N_k\be} {L} p^{d+1-\ell }}-
N_k\fl{ \frac {\be} {L} p^{d+1-\ell }}\right).
\end{equation}
By the same argument as the one that 
we used in the second step of the proof of Lemma~\ref{lem:12a}
in Section~\ref{sec:9}, the rational
number $\frac{\beta p^{d+1-\ell}}{L}$ is not an integer.
However, the fact that $\be p^{d+1-\ell }/L$ is not an integer entails that
$$
 \frac {\be} {L} p^{d+1-\ell }-
\fl{ \frac {\be} {L} p^{d+1-\ell }}\ge\frac {1} {L},$$
as long as $\ell \le d+1$. Multiplication of both sides of this inequality by
$N_k$ leads to the chain of inequalities
$$
 \frac {N_k\be} {L} p^{d+1-\ell }-
N_k\fl{ \frac {\be} {L} p^{d+1-\ell }}\ge\frac {N_k} L\ge
1
$$
(it is here where we use the assumption $L\le
N_k=\max(N_1,\dots,N_k)$),
whence
$$\fl{\frac {N_k\be} {L} p^{d+1-\ell }}-N_k\fl{ \frac {\be} {L}
p^{d+1-\ell }}\ge 1, 
$$
provided $\ell \le d+1$.
Use of this estimation in \eqref{eq:105} gives
$$v_p\big(B_{\mathbf N}(a+pj)\big)\ge d+1=1+v_p(Lj+\ep).$$
This completes the proof of \eqref{eq:100}, and, hence, of
\eqref{eq:congrconj2}.

\medskip
For later use, we record that we have in particular shown that
for any 
$$D\le 1+\max_{1\le\ep\le \lfloor La/p\rfloor}v_p(Lj+\ep)$$ 
we have
\begin{equation} \label{eq:sumest} 
\sum _{\ell =2} ^{D}\left(\fl{\frac {N_k(a+pj)} {p^\ell }}-
N_k\fl{\frac {a+pj} {p^\ell }}\right)\ge D-1.
\end{equation}

\medskip
We now embark on the proof of \eqref{eq:congrconj1} itself.

\subsection{Second part: the case $j=0$}
In this case, we want to prove that
\begin{equation} \label{eq:congrconj4} 
B_{\mathbf N}(a)H_{\lfloor La/p\rfloor} \in p\frac {M_{\mathbf{N}}}
{\Th_L}\mathbb{Z}_p\ ,
\end{equation}
or, using \eqref{eq:J} (in the other direction), equivalently
\begin{equation} \label{eq:congrconj5} 
B_{\mathbf N}(a)H_{La} \in \frac {M_{\mathbf{N}}}
{\Th_L}\mathbb{Z}_p\ ,
\end{equation}
The reader should keep in mind that we still assume
that $p/L\le a<p$, so that, in particular, $a>0$. 

If $p>N_k=\max(N_1,\dots,N_k)$, then our claim, in the form
\eqref{eq:congrconj4}, reduces to\break 
$B_{\mathbf N}(a)H_{\lfloor La/p\rfloor} \in
p\mathbb{Z}_p$, 
which is indeed true because of \eqref{eq:congrconj2} with $j=0$.

Now let $p\le N_k$. By Lemma~\ref{lem:multinomial/N!} and the definition of
$\Th_L$, our claim, this time in the form
\eqref{eq:congrconj5}, holds for $a=1$. So, let $a\ge2$ from now on.

In a similar way as we did for the expression in
\eqref{eq:congrconj2}, we bound the $p$-adic valuation of the
expression in \eqref{eq:congrconj5} from below:
\begin{align} \notag
v_p\big(B_{\mathbf N}(a)H_{La}\big)
&=
\sum _{i=1} ^{k}\sum _{\ell =1} ^{\infty}\left(\fl{\frac {N_ia} {p^\ell }}-
N_i\fl{\frac {a} {p^\ell }}\right)+
v_p(H_{La})\notag\\
&\ge\sum _{i=1} ^{k}\sum _{\ell =1} ^{\infty}\fl{\frac {N_ia} {p^\ell }}
-\fl{\log_p La}\notag\\
&\ge\sum _{i=1} ^{k}\sum _{\ell =1} ^{\infty}\fl{\frac {2N_i} {p^\ell }}
-\fl{\log_p Lp}\notag\\
&\ge\fl{\frac {2N_k} {p}}+
\sum _{\ell =2} ^{\infty}\fl{\frac {2N_k} {p^\ell }}+
\sum _{i=1} ^{k-1}\sum _{\ell =1} ^{\infty}\fl{\frac {2N_i} {p^\ell }}
-\fl{\log_p L}-1
\notag\\
&\ge\fl{\frac {N_k} {p}}+
\sum _{i=1} ^{k}\sum _{\ell =1} ^{\infty}\fl{\frac {N_i} {p^\ell }}
-\fl{\log_p L}-1
\label{eq:unglA0}\\
&\ge\max\left\{1,\fl{L/p}\right\}+
\sum _{i=1} ^{k}v_p(N_i!)
-\fl{\log_p L}-1.
\label{eq:unglA}
\end{align}
If $p=2$, then we can continue the estimation \eqref{eq:unglA} as
\begin{equation} \label{eq:unglB} 
v_2\big(B_{\mathbf N}(a)H_{La}\big)\ge 
\sum _{i=1} ^{k}v_2(N_i!)
-\fl{\log_2 L}=
v_2\big(M_{\mathbf{N}}/\Th_L\big),
\end{equation}
where we used the simple fact $v_2(H_L)=-\fl{\log_2L}$ to obtain the equality.
(In fact, at this point it was not necessary to consider the case
$p=2$ because $a<p$ and because we assumed $a\ge2$. However, we shall
re-use the present estimations later in the third part of the current
proof, in a context where $a=1$ is allowed.)

From now on let $p\ge3$.
We use the fact that
\begin{equation} \label{eq:log}
x\ge\fl{\log_px}+2 
\end{equation}
for all integers $x\ge2$ and primes $p\ge3$.
Thus, in the case that $L\ge2p$, 
the estimation \eqref{eq:unglA} can be continued as
$$
v_p\big(B_{\mathbf N}(a)H_{La}\big)
\ge 
1+\fl{\log_p\fl{L/p}}+
\sum _{i=1} ^{k}v_p(N_i!)
-\fl{\log_p L}
\ge 
\sum _{i=1} ^{k}v_p(N_i!)=v_p( M_{\mathbf{N}}),
$$
implying \eqref{eq:congrconj5} in this case.
If $p\le L<2p$, 
then the estimation \eqref{eq:unglA} can be continued as
\begin{align*}
v_p\big(B_{\mathbf N}(a)H_{La}\big)
&\ge 
1+
\sum _{i=1} ^{k}v_p(N_i!)
-2=v_p(M_{\mathbf{N}}/\Th_L),
\end{align*}
implying \eqref{eq:congrconj5} in this case also.
Finally, if $L<p$, it follows from \eqref{eq:unglA} that 
\begin{align*}
v_p\big(B_{\mathbf N}(a)H_{La}\big)
&\ge 
1+
\sum _{i=1} ^{k}v_p(N_i!)
-1=v_p(M_{\mathbf{N}}),
\end{align*}
implying \eqref{eq:congrconj5} also in this final case.
Thus, \eqref{eq:congrconj4} is established.

\subsection{Third part: the case $j>0$}
Now let $j>0$.
If $p>N_k=\max(N_1,\dots,N_k)$, then \eqref{eq:congrconj1} reduces to 
\begin{equation*}
B_{\mathbf N}(a+pj)\left(H_{Lj+\lfloor La/p\rfloor} - H_{Lj}\right) \in
p\mathbb{Z}_p\ ,
\end{equation*}
which is 
again true because of \eqref{eq:congrconj2}.

Now let $p\le N_k$. The reader should keep in mind that we still assume
that $p/L\le a<p$, so that, in particular, $a>0$. 
In a similar way as we did for the expression in
\eqref{eq:congrconj2}, we bound the $p$-adic valuation of the
expression in \eqref{eq:congrconj1} from below. For the sake of
convenience, we write $T_1$ for $\max_{1\le\ep\le \lfloor
La/p\rfloor}v_p(Lj+\ep)$ and $T_2$ for $\fl{\log_p (a+pj)}$. 
Since it is somewhat hidden where our assumption $j>0$ enters the
subsequent considerations, we point out to the reader that
$j>0$ implies that $T_2\ge1$; without this property the split of
the sum over $\ell$ into subsums in the 
chain of inequalities below would be impossible.
So, using the above notation, we have (the detailed explanations for the
various steps are given immediately after the 
following chain of estimations)
{\allowdisplaybreaks
\begin{align} \notag
v_p\Big(&B_{\mathbf N}(a+pj)\left(H_{Lj+\lfloor La/p\rfloor} - H_{Lj}\right)\Big)\\
\notag
&=
\sum _{i=1} ^{k}\sum _{\ell =1} ^{\infty}\left(\fl{\frac {N_i(a+pj)}
{p^\ell }}- 
N_i\fl{\frac {a+pj} {p^\ell }}\right)+
v_p\big(H_{Lj+\lfloor La/p\rfloor} - H_{Lj}\big)\\
&=
\fl{\frac {N_k(a+pj)} {p }}-
N_k\fl{\frac {a+pj} {p }}
+
\sum _{\ell =2} ^{\min\{1+T_1,T_2\}}
\left(\fl{\frac {N_k(a+pj)} {p^\ell }}-
N_k\fl{\frac {a+pj} {p^\ell }}\right)
\notag\\
\notag
&\kern1cm
+
\sum _{\ell =\min\{1+T_1,T_2\}+1} ^{\infty}
\left(\fl{\frac {N_k(a+pj)} {p^\ell }}-
N_k\fl{\frac {a+pj} {p^\ell }}\right)\\
\notag
&\kern1cm
+
\sum _{i=1} ^{k-1}\sum _{\ell =1} ^{\infty}\left(\fl{\frac {N_i(a+pj)}
{p^\ell }}- 
N_i\fl{\frac {a+pj} {p^\ell }}\right)
+v_p\big(H_{Lj+\lfloor La/p\rfloor} - H_{Lj}\big)
\notag
\\
\notag
&\ge
\fl{\frac {N_ka} {p }}+\min\{1+T_1,T_2\}-1
+
\sum _{i=1} ^{k}\sum _{\ell =T_2+1} ^{\infty}
\left(\fl{\frac {N_i(a+pj)} {p^\ell }}-
N_i\fl{\frac {a+pj} {p^\ell }}\right)\\
&\kern1cm
+v_p\big(H_{Lj+\lfloor La/p\rfloor} - H_{Lj}\big)
\label{eq:ungl1}
\\
&\ge
\fl{\frac {N_ka} {p }}
+T_1+v_p\big(H_{Lj+\lfloor La/p\rfloor} - H_{Lj}\big)
+\min\{0,T_2-T_1-1\}
\notag\\
&\kern1cm
+
\sum _{i=1} ^{k}\sum _{\ell =\fl{\log_p(a+pj)}+1}
^{\infty}\left(\fl{\frac {N_i(a+pj)} {p^\ell }}- 
N_i\fl{\frac {a+pj} {p^\ell }}\right)
\label{eq:ungl2}\\
&\ge 
\max\left\{1,\fl{L/p}\right\}
+\min\{0,T_2-T_1-1\}
+
\sum _{i=1} ^{k}\sum _{\ell =1} ^{\infty}\fl{\frac {N_i} {p^\ell
}\cdot\frac {a+pj} {p^{\fl{\log_p(a+pj)}}}}
\label{eq:ungl3}\\
&\ge 
\max\left\{1,\fl{L/p}\right\}+\fl{\log_p (a+pj)}
-\fl{\log_p\big(Lj+\lfloor La/p\rfloor\big)}-1
\notag\\
&\kern1cm
+
\sum _{i=1} ^{k}\sum _{\ell =1} ^{\infty}\fl{\frac {N_i} {p^\ell
}\cdot\frac {a+pj} {p^{\fl{\log_p(a+pj)}}}}
\label{eq:ungl4}
\\
&\ge 
\max\left\{1,\fl{L/p}\right\}+\fl{\log_p j}
-\fl{\log_p\big(Lj+\lfloor La/p\rfloor\big)}
+
\sum _{i=1} ^{k}\sum _{\ell =1} ^{\infty}\fl{\frac {N_i} {p^\ell }}
\label{eq:ungl5}
\\
&\ge 
\max\left\{1,\fl{L/p}\right\}+\fl{\log_p j}
-\fl{\log_p L}-\fl{\log_p\left(j+\frac {1} {L}\lfloor La/p\rfloor\right)}-1
\notag\\
&\kern1cm
+
\sum _{i=1} ^{k}v_p(N_i!)
\label{eq:ungl6}\\
&\ge 
\max\left\{1,\fl{L/p}\right\}
-\fl{\log_p L}-1+v_p(M_{\mathbf{N}}).
\label{eq:ungl7}
\end{align}
}%
Here, we used \eqref{eq:sumest} in order to get \eqref{eq:ungl1}.
To get \eqref{eq:ungl3}, we used the inequalities
\begin{equation} \label{eq:ungl6a} 
\fl{\frac {N_ka} {p}}\ge\fl{\frac {N_k} {p}}\ge
\max\left\{1,\fl{L/p}\right\}
\end{equation}
and 
\begin{equation} \label{eq:ungl100} 
T_1+v_p\big(H_{Lj+\lfloor La/p\rfloor} - H_{Lj}\big)\ge0.
\end{equation}
To get \eqref{eq:ungl4}, we used that
$$T_2-T_1-1\ge \fl{\log_p (a+pj)}
-\fl{\log_p\big(Lj+\lfloor La/p\rfloor\big)}-1$$
and
$$
\fl{\log_p (a+pj)}
-\fl{\log_p\big(Lj+\lfloor La/p\rfloor\big)}-1=
\fl{\log_p j}-\fl{\log_p\big(Lj+\lfloor La/p\rfloor\big)}
\le 0,
$$
so that
\begin{equation} \label{eq:ungl6b}
\min\{0,T_2-T_1-1\}\ge 
\fl{\log_p (a+pj)}
-\fl{\log_p\big(Lj+\lfloor La/p\rfloor\big)}-1.
\end{equation}
Next, to get \eqref{eq:ungl5}, we used
\begin{equation} \label{eq:ungl6d}
\fl{\frac {N_i} {p^\ell }\cdot\frac {a+pj}
{p^{\fl{\log_p(a+pj)}}}}\ge
\fl{\frac {N_i} {p^\ell }}.
\end{equation}
To get \eqref{eq:ungl6}, we used
\begin{equation} \label{eq:ungl6c}
\fl{\log_p\big(Lj+\lfloor La/p\rfloor\big)}
\le\fl{\log_p L}+\fl{\log_p\left(j+\frac {1} {L}\lfloor La/p\rfloor\right)}+1 .
\end{equation}
Finally, we used $\frac {1} {L}\lfloor
La/p\rfloor<1$ in order to get
\eqref{eq:ungl7}. 

If we now repeat the arguments after \eqref{eq:unglA},
then we see that the estimation \eqref{eq:ungl7} implies
\begin{equation} \label{eq:ungl8} 
v_p\Big(B_{\mathbf N}(a+pj)\left(H_{Lj+\lfloor La/p\rfloor} - H_{Lj}\right)\Big)
\ge
v_p\big(M_{\mathbf{N}}/\Th_L\big).
\end{equation}
This almost proves \eqref{eq:congrconj1}, our lower
bound on the $p$-adic valuation of the number in \eqref{eq:congrconj1}
is just by $1$ too low. 

In order to establish that \eqref{eq:congrconj1} is indeed correct, we
assume by contradiction that all the inequalities in the estimations
leading to \eqref{eq:ungl7} and finally to \eqref{eq:ungl8} 
are in fact equalities. 
In particular, the estimations in \eqref{eq:ungl6a} hold with equality only
if $a=1$ and, if $L$ should be at least $p$, also $\fl{N_k/p}=\fl{L/p}$.
We shall henceforth assume both of these two conditions.

If we examine the arguments after \eqref{eq:unglA} that led us from
\eqref{eq:ungl7} to \eqref{eq:ungl8}, then we see that they prove in
fact 
\begin{equation} \label{eq:ungl11} 
v_p\Big(B_{\mathbf N}(a+pj)\left(H_{Lj+\lfloor La/p\rfloor} - H_{Lj}\right)\Big)
\ge
1+v_p\big(M_{\mathbf{N}}/\Th_L\big)
\end{equation}
except if:

\bigskip
{\sc Case 1:} $p=2$ and $\fl{L/2}=1$;

{\sc Case 2:} $p\ge3$ and $p\le L<2p$;

{\sc Case 3:} $p=3$ and $\fl{L/3}=2$;

{\sc Case 4:} $L<p$.
\bigskip

In all other cases, there holds either strict inequality in
\eqref{eq:log} with $x=\fl{L/p}$, or one has $v_p(\Th_L)\ge1$
and is able to show 
$$
v_p\Big(B_{\mathbf N}(a+pj)\left(H_{Lj+\lfloor La/p\rfloor} - H_{Lj}\right)\Big)
\ge
v_p\big(M_{\mathbf{N}}\big),
$$ 
so that \eqref{eq:ungl11} is satisfied, as desired.
We now show that \eqref{eq:ungl11} holds as well in Cases~1--4, thus
completing the proof of \eqref{eq:congrconj1}.

\medskip
{\sc Case 1}. Let first $p=2$ and $L=2$. We then have
\begin{align*}
\min\{0,T_2-T_1-1\}&=\min\{0,\fl{\log_2(2j+1)}-v_2(2j+1)-1\}\\
&=
\min\{0,\fl{\log_2(2j+1)}-1\}=0>-1,
\end{align*}
in contradiction to having equality in \eqref{eq:ungl6b}.

On the other hand, 
if $p=2$ and $L=3$ then, because of equality in the second estimation
in \eqref{eq:ungl6a}, we must have $N_k=3$. We have
$$H_{Lj+\lfloor La/p\rfloor} - H_{Lj}
=H_{3j+1} - H_{3j}=\frac {1} {3j+1}.$$
If there holds equality in \eqref{eq:ungl6b}, then $Lj+\fl{La/p}=3j+1$ 
must be a power of $2$, say $3j+1=2^e$ or, equivalently, 
$j=(2^e-1)/3$. It follows that
$$
\fl{\frac {N_k} {p }\cdot\frac {a+pj}
{p^{\fl{\log_p(a+pj)}}}}
=\fl{\frac {3} {2 }\cdot\frac {1+2j}
{2^{\fl{\log_2(1+2j)}}}}
=\fl{\frac {3} {2 }\cdot\frac {2^{e+1}+1}
{3\cdot 2^{e-1}}}
=2>1=\fl{\frac {3} {2}}=\fl{\frac {N_k} {p}},
$$
in contradiction to having equality in \eqref{eq:ungl6d} with $\ell=1$.

\medskip
{\sc Case 2}. Our assumptions $p\ge3$ and $p\le L<2p$ imply 
$$
H_{Lj+\lfloor La/p\rfloor} - H_{Lj}
=H_{Lj+1} - H_{Lj}=\frac {1} {Lj+1}.
$$
Arguing as in the previous case, in order to have equality in 
\eqref{eq:ungl6b}, we must have $Lj+1=f\cdot p^e$ for some positive
integers $e$ and $f$ with $0<f<p$. Thus, $j=(f\cdot p^e-1)/L$ and
$p<L$. (If $p=L$ then $j$ would be non-integral.) It follows that
\begin{equation} \label{eq:ungl12}
\fl{\frac {N_k} {p }\cdot\frac {a+pj}
{p^{\fl{\log_p(a+pj)}}}}=
\fl{\frac {N_k} {p }\cdot\frac {f\cdot p^{e+1}+L-p}
{L\cdot p^{\fl{\log_p((f\cdot p^{e+1}+L-p)/L)}}}}. 
\end{equation}
If $f=1$, then we obtain from \eqref{eq:ungl12} that
$$
\fl{\frac {N_k} {p }\cdot\frac {a+pj}
{p^{\fl{\log_p(a+pj)}}}}=
\fl{\frac {N_k} {p }\cdot\frac {p^{e+1}+L-p}
{L\cdot p^{e-1}}}
\ge\fl{\frac {p^{e+1}+L-p} {p^{e}}}
>1=\fl{\frac {L} {p}}=\fl{\frac {N_k} {p}}, 
$$
in contradiction with having equality in \eqref{eq:ungl6d} with $\ell=1$.

On the other hand, if $f\ge2$, then we obtain from \eqref{eq:ungl12} that
$$
\fl{\frac {N_k} {p }\cdot\frac {a+pj}
{p^{\fl{\log_p(a+pj)}}}}\ge
\fl{\frac {f\cdot p^{e+1}+L-p}
{p^{e+1}}}\ge f>1=\fl{\frac {L} {p}}=\fl{\frac {N_k} {p}}, 
$$
again in contradiction with having equality in \eqref{eq:ungl6d} with $\ell=1$.

\medskip
{\sc Case 3}. Our assumptions $p=3$ and $\fl{L/3}=2$ imply 
$$
H_{Lj+\lfloor La/p\rfloor} - H_{Lj}
=H_{Lj+2} - H_{Lj}=\frac {1} {Lj+1}+\frac {1} {Lj+2}.
$$
Similar to the previous cases, in order to have equality in 
\eqref{eq:ungl6b}, we must have $Lj+\ep=f\cdot 3^e$ for some positive
integers $\ep,e,f$ with $0<\ep,f<3$. The arguments from Case~2 can now
be repeated almost verbatim. We leave the details to the reader.

\medskip
{\sc Case 4}. If $L<p$, then $p/L>1=a$, a contradiction to the
assumption that we made at the very beginning of this section.

\medskip
This completes the proof of the lemma.

\section{Proof of Lemma~\ref{lem:11}} \label{sec:6}

We proceed in the same way as in the proof of Lemma~\ref{lem:strat4}
in Section~\ref{sec:6a}.
Again, the claim is trivially true if $p$ divides $m$, so that
we may assume that $p$ does not divide $m$ for the rest of the proof.
Let us write $m=a+pj$, with $0< a<p$. Then
comparison with \eqref{eq:congrconj1} shows that we are in a
very similar situation here. Indeed, we may derive \eqref{eq:110}
from Lemma~\ref{lem:12}. In order to see this, we use \eqref{eq:vH}
to deduce
\begin{multline*}
v_p\Big(B_{\mathbf N}(m)\big(H_{Lmp^s}-H_{L\fl{m/p}p^{s+1}}\big)\Big)\\
\ge
-1-s+\min\left\{v_p\Big(B_{\mathbf N}(a+pj)(H_{Lj+\fl{La/p}}-H_{Lj})\big),
1+v_p\big(B_{\mathbf N}(a+pj)\Big)\right\}.
\end{multline*}
Use of Lemmas~\ref{lem:multinomial/N!} and \ref{lem:12} 
then completes the proof.

\section{The equivalence of Zudilin's and our definition of $\mathbf
H_N(m)$} \label{sec:H_N}

Zudilin's definition of the quantity $\mathbf H_N(m)$ deviates from
\eqref{eq:rajout2}. In this final section, we show that our
definition is equivalent to Zudilin's.

\begin{lem} \label{lem:rajout2} 
Let $m$ be a non-negative integer, and let $N$ be a positive integer
with associated parameters $\al_i, \beta_i, 
\mu, \eta$ {\em(}that is, given by
\eqref{eq:aj} and \eqref{eq:bj}, respectively{\em)}. Then
\begin{equation*}
\mathbf{H}_N(m)
=\sum_{j=1}^{\varphi(N)} H(r_j/N,m) - \varphi(N)H(1,m),
\end{equation*}
where $H(x,m):= \sum_{n=0}^{m-1}\frac{1}{x+n}$, and where
$r_j\in \{1, 2, \ldots, N\}$ form the residue 
classes modulo $N$ which are coprime to $N$. As before,
$\varphi(\,.\,)$ denotes Euler's totient function.
\end{lem}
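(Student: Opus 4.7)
The plan is to rewrite each harmonic number $H_{Km}$ appearing in $\mathbf{H}_N(m)$ in the shifted form $H(i/K,m)$, then apply M\"obius-type inclusion-exclusion to collapse the resulting sum onto precisely the residues coprime to $N$.

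First I would establish the elementary identity
\begin{equation*}
K\, H_{Km} \;=\; \sum_{i=1}^{K} H(i/K,m),
\end{equation*}
which follows immediately from grouping the terms $1,2,\dots,Km$ into blocks of length $K$ via $Kn+i$ and dividing by $K$. Next, I would re-read the definitions \eqref{eq:aj} and \eqref{eq:bj} of the $\alpha_j$'s and $\beta_j$'s as an inclusion-exclusion expansion indexed by subsets $S \subseteq\{p_1,\dots,p_\ell\}$: writing $d_S:=\prod_{p\in S}p$, the multiset $(\alpha_j)\sqcup(-\beta_j)$ (ignoring the repeated $1$'s in $(\beta_j)$) records the quantities $(-1)^{|S|}\,N/d_S$. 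Since $N\prod_i(1-1/p_i)=\varphi(N)$, this inclusion-exclusion forces exactly $\varphi(N)$ trailing $1$'s in the $\beta$-vector to balance the sums, so
\begin{equation*}
\mathbf{H}_N(m) \;=\; \sum_{S\subseteq\{p_1,\dots,p_\ell\}}(-1)^{|S|}\,\frac{N}{d_S}\,H_{(N/d_S)m}\;-\;\varphi(N)\,H_m.
\end{equation*}

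Now I would substitute the block identity with $K=N/d_S$, obtaining
\begin{equation*}
\mathbf{H}_N(m)\;=\;\sum_{S}(-1)^{|S|}\sum_{i=1}^{N/d_S}H(id_S/N,m)\;-\;\varphi(N)\,H_m.
\end{equation*}
The variable substitution $r=i d_S$ parametrises exactly the multiples of $d_S$ in $\{1,2,\dots,N\}$, so interchanging summations gives
\begin{equation*}
\mathbf{H}_N(m)\;=\;\sum_{r=1}^{N}H(r/N,m)\,\Bigl(\sum_{\substack{d\mid \operatorname{rad}(N)\\ d\mid r}}(-1)^{\omega(d)}\Bigr)\;-\;\varphi(N)\,H_m.
\end{equation*}
Since $\operatorname{rad}(N)$ is squarefree, the inner sum equals $\prod_{p\mid\gcd(r,N)}(1-1)$, which vanishes unless $\gcd(r,N)=1$, in which case it is $1$. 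This isolates precisely the $\varphi(N)$ residues $r_1,\dots,r_{\varphi(N)}$. Finally, noting that $H(1,m)=H_m$ rewrites the leftover term in the desired form, completing the identification with the claimed expression.

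The only non-routine step is the bookkeeping that the $\varphi(N)$ extra $1$'s in $(\beta_j)$ are exactly what the inclusion-exclusion identity $N\prod_i(1-1/p_i)=\varphi(N)$ predicts; everything else is a mechanical rewriting. I do not anticipate any serious obstacle beyond keeping track of signs during this inclusion-exclusion.
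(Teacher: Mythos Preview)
Your proof is correct and takes a genuinely different route from the paper's. The paper proceeds analytically: it first extends the factorial identity $\mathbf{B}_N(m)=C_N^m\prod_j (r_j/N)_m/m!$ to a gamma-function identity valid for all real $m\ge 0$ (their \eqref{eq:rajout3}), using Gauss's multiplication formula, and then takes the logarithmic derivative of both sides; the digamma differences $\psi(m+x)-\psi(x)=H(x,m)$ convert this into the desired identity. Your argument is purely combinatorial: the block decomposition $K\,H_{Km}=\sum_{i=1}^K H(i/K,m)$ followed by M\"obius-style inclusion--exclusion over the prime divisors of $N$ isolates the residues coprime to $N$ directly, with no gamma or digamma functions involved. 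Your approach is more elementary and self-contained for this lemma in isolation; the paper's approach has the side benefit of establishing \eqref{eq:rajout3} itself, which is the real-variable form of \eqref{eq:definitionBNgras} used in Section~\ref{sec:10}.
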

\begin{proof} 
For $N=1$, we have $\mathbf{H}_1(m)=0$, so that the assertion of
the lemma holds trivially. Therefore, from now on, we assume $N\ge2$.

We claim that, for any real number $m\ge0$, we have
\begin{equation}\label{eq:rajout3}
\frac{C_N^m}{
\Gamma(m+1)^{\varphi(N)}}\prod_{j=1}^{\varphi(N)}\frac{\Gamma(m+r_j/N)}{\Gamma(r_j/N)}
=  
\frac{\prod_{j=1}^\mu \Gamma(\alpha_j m+1)}{\prod_{j=1}^\eta
\Gamma(\beta_j m+1)}, 
\end{equation}
where $\Ga(x)$ denotes the gamma function.
This generalises Zudilin's identity~\eqref{eq:Bzudilin}
to real values of $m$. We essentially extend his proof to
real $m$, using the well-known formula~\cite[p.~23, Theorem~1.5.2]{AAR}
\begin{equation} \label{eq:Ga}
\Ga(a)\,
\Ga\left(a+\frac {1} {n}\right)\,
\Ga\left(a+\frac {2} {n}\right)\cdots
\Ga\left(a+\frac {n-1} {n}\right)=
n^{\frac {1} {2}-an}(2\pi)^{(n-1)/2}\Ga(an),
\end{equation}
valid for real numbers $a$ and positive integers $n$ such that $aN$
is not an integer $\le0$. Indeed, as in the Introduction,
let $p_1, p_2, \dots, p_\ell$ denote the distinct prime factors   
of $N$. 
(It should be noted that there is at least one such prime factor due
to our assumption $N\ge2$.)
Furthermore, for a subset $J$ of $\{1,2,\dots,\ell\}$, let
$p_J$ denote the product $\prod _{j\in J} ^{}p_j$ of corresponding prime
factors of $N$. 
(In the case that $J=\emptyset$, the empty product must be interpreted
as $1$.) Then, by the principle of
inclusion-exclusion, we can rewrite the left-hand side of
\eqref{eq:rajout3} in the form
$$
\frac{C_N^m}{
\Gamma(m+1)^{\varphi(N)}}\cdot
\frac {
\underset{\vert J\vert\text { even}}
{\prod _{J\subseteq \{1,2,\dots,\ell\}} ^{}}
\prod _{i=1} ^{N/p_J}\Ga\left(m+\frac {ip_J} {N}\right)} 
{
\underset{\vert J\vert\text { odd}}
{\prod _{J\subseteq \{1,2,\dots,\ell\}} ^{}}
\prod _{i=1} ^{N/p_J}\Ga\left(m+\frac {ip_J} {N}\right)}
\cdot
\frac {
\underset{\vert J\vert\text { odd}}
{\prod _{J\subseteq \{1,2,\dots,\ell\}} ^{}}
\prod _{i=1} ^{N/p_J}\Ga\left(\frac {ip_J} {N}\right)} 
{
\underset{\vert J\vert\text { even}}
{\prod _{J\subseteq \{1,2,\dots,\ell\}} ^{}}
\prod _{i=1} ^{N/p_J}\Ga\left(\frac {ip_J} {N}\right)}  .
$$
To each of the products over $i$, formula~\eqref{eq:Ga} can be
applied. As a result, we obtain the expression
\begin{multline} \label{eq:Gaexpr}
\frac{C_N^m}{
\Gamma(m+1)^{\varphi(N)}}\cdot
\frac {
\underset{\vert J\vert\text { even}}
{\prod _{J\subseteq \{1,2,\dots,\ell\}} ^{}}
\left(\frac {N} {p_J}\right)^{-\left(m+\frac {p_J} {N}\right)\frac {N}
{p_J}}
\Ga\left(m\frac {N} {p_J}+1\right)} 
{
\underset{\vert J\vert\text { odd}}
{\prod _{J\subseteq \{1,2,\dots,\ell\}} ^{}}
\left(\frac {N} {p_J}\right)^{-\left(m+\frac {p_J} {N}\right)\frac {N}
{p_J}}
\Ga\left(m\frac {N} {p_J}+1\right)} 
\cdot
\frac {
\underset{\vert J\vert\text { odd}}
{\prod _{J\subseteq \{1,2,\dots,\ell\}} ^{}}
\Ga\left(1\right)} 
{
\underset{\vert J\vert\text { even}}
{\prod _{J\subseteq \{1,2,\dots,\ell\}} ^{}}
\Ga\left(1\right)} \\
=
\frac{C_N^m}{
\Gamma(m+1)^{\varphi(N)}}\cdot
\frac {
\underset{\vert J\vert\text { even}}
{\prod _{J\subseteq \{1,2,\dots,\ell\}} ^{}}
\left(\frac {N} {p_J}\right)^{-m {N}/{p_J}}
\Ga\left(m\frac {N} {p_J}+1\right)} 
{
\underset{\vert J\vert\text { odd}}
{\prod _{J\subseteq \{1,2,\dots,\ell\}} ^{}}
\left(\frac {N} {p_J}\right)^{-m {N}/{p_J}}
\Ga\left(m\frac {N} {p_J}+1\right)} ,
\end{multline}
where the simplification in the exponent of $N/p_J$ is due to the
fact that there are as many subsets of even cardinality of a given 
non-empty set as there are subsets of odd cardinality. 
Since, again by inclusion-exclusion, 
\begin{equation} \label{eq:phi}
\underset{\vert J\vert\text { even}}
{\sum _{J\subseteq \{1,2,\dots,\ell\}} ^{}}\frac {N} {p_J}
-\underset{\vert J\vert\text { odd}}
{\sum _{J\subseteq \{1,2,\dots,\ell\}} ^{}}\frac {N} {p_J}
=N\prod _{p\mid N} ^{}\left(1-\frac {1} {p}\right)
=\ph(N),
\end{equation}
we have
$$
\frac{1}{
\Gamma(m+1)^{\varphi(N)}}\cdot
\frac {
\underset{\vert J\vert\text { even}}
{\prod _{J\subseteq \{1,2,\dots,\ell\}} ^{}}
\Ga\left(m\frac {N} {p_J}+1\right)} 
{
\underset{\vert J\vert\text { odd}}
{\prod _{J\subseteq \{1,2,\dots,\ell\}} ^{}}
\Ga\left(m\frac {N} {p_J}+1\right)} =
\frac{\prod_{j=1}^\mu \Gamma(\alpha_j m+1)}{\prod_{j=1}^\eta
\Gamma(\beta_j m+1)}
$$
and
$$
\frac {
\underset{\vert J\vert\text { even}}
{\prod _{J\subseteq \{1,2,\dots,\ell\}} ^{}}
{N}^{-m {N}/{p_J}}}
{
\underset{\vert J\vert\text { odd}}
{\prod _{J\subseteq \{1,2,\dots,\ell\}} ^{}}
{N}^{-m {N}/{p_J}}}=
N^{-m\ph(N)}.
$$
Finally, consider a fixed prime number dividing $N$, 
$p_j$ say. Then, using again \eqref{eq:phi}, we
see that the exponent of $p_j$ in the expression
\eqref{eq:Gaexpr} is
$$
-\frac {m} {p_j}\underset{\vert J\vert\text { odd},\,j\notin J}
{\sum _{J\subseteq \{1,2,\dots,\ell\}} ^{}}\frac {N} {p_J}
+\frac {m} {p_j}\underset{\vert J\vert\text { even},\,j\notin J}
{\sum _{J\subseteq \{1,2,\dots,\ell\}} ^{}}\frac {N} {p_J}
=\frac {mN} {p_j}\underset{p\ne p_j}
{\prod _{p\mid N} ^{}}\left(1-\frac {1} {p}\right)
=\frac {m} {p_j}\frac {\ph(N)} {1-\frac {1} {p_j}}=\frac {m\varphi(N)}
{p_j-1}.
$$
If all these observations are used in \eqref{eq:Gaexpr}, we arrive at
the right-hand side of \eqref{eq:rajout3}.

Now, let us call $b(m)$ the function defined by both sides
of~\eqref{eq:rajout3}, and
let $\psi(x)=\Gamma'(x)/\Gamma(x)$ be the
digamma function.  
We will use the well-known property 
(see~\cite[p.~13, Theorem~1.2.7]{AAR}) that
$
\psi(x+n) - \psi(x) = H(x,n)
$
for real numbers $x>0$ and integers $n\ge 0$.

By taking the logarithmic derivative of the right-hand 
side of~\eqref{eq:rajout3}, we have  
\begin{align}
\frac{b'(m)}{b(m)} &= \sum_{j=1}^\mu \al_j \psi(\al_jm+1) -
\sum_{j=1}^\eta \be_j \psi(\be_jm+1) \notag 
\\
         &= \sum_{j=1}^\mu \al_j \big(\psi(1)+H_{\al_jm}\big) -
\sum_{j=1}^\eta \be_j\big(\psi(1)+ H_{\be_jm}\big) \notag 
\\
 &= \sum_{j=1}^\mu \al_j H_{\al_jm} - \sum_{j=1}^\eta \be_j
H_{\be_jm}\notag\\
&=\mathbf H_N(m), \label{eq:rajout:4} 
\end{align}
because $\sum_{j=1}^\mu \al_j = \sum_{j=1}^\eta \be_j$. It also
follows that $b'(0)/b(0)=0$. 

On the other hand,  by taking the logarithmic derivative of the 
left-hand side of~\eqref{eq:rajout3}, we also have  
\begin{equation*}
\frac{b'(m)}{b(m)} = \log(C_N) + \sum_{j=1}^{\varphi(N)} \psi(m+r_j/N)
- \varphi(N) \psi(m+1). 
\end{equation*}
Since $b'(0)/b(0)=0$, we have 
$
 \log(C_N) = -\sum_{j=1}^{\varphi(N)} \psi(r_j/N) + \varphi(N) \psi(1)
$
and therefore, 
\begin{align}
\frac{b'(m)}{b(m)} &= \sum_{j=1}^{\varphi(N)} \big(\psi(m+r_j/N)-
\psi(r_j/N)\big) - \varphi(N) \big(\psi(m+1)-\psi(1)\big)\notag 
\\
      &=  \sum_{j=1}^{\varphi(N)} H(r_j/N,m) - \varphi(N) H(1,m).
\label{eq:rajout:5}
\end{align}
The lemma follows by equating the expressions~\eqref{eq:rajout:4}
and~\eqref{eq:rajout:5} obtained for $b'(m)/b(m)$. 
\end{proof}

\section*{Acknowledgements}
The authors are extremely grateful to Alessio Corti 
and Catriona Maclean for illuminating discussions 
concerning the geometric side of our work. 
They are furthermore indebted to the anonymous referees for their
criticism and suggestions, which helped to improve the presentation
of the material considerably.

\def\refname{Bibliography}

\end{document}